\title{Isomorphisms of tensor algebras arising from weighted partial systems}
\keywords{Tensor algebra; weighted partial system; Markov operators; C*-correspondence; non-deterministic dynamics; classification of non-self-adjoint operator algebras}
\subjclass[2000]{Primary: 47L30, 46K50, 46H20. 
Secondary: 46L08, 37A30}
\author{Adam Dor-On}
\address{Adam Dor-On,
Department of Pure Mathematics,
University of Waterloo, 
Waterloo, ON, Canada.} 
\email{adoron@uwaterloo.ca}
\thanks{The author is partially supported by 
an Ontario Trillium Scholarship}
\renewcommand{\MR}[1]{} % remove MR info from bibliography
\newcommand{\la}{\langle}
\newcommand{\ra}{\rangle}
\newcommand{\nn}{\mathbb{N}}
\newcommand{\cc}{\mathbb{C}}
\newcommand{\dd}{\mathbb{D}}
\newcommand{\tensor}{\mathcal{T}_+}
\DeclareMathOperator{\supp}{supp}
\DeclareMathOperator{\Ker}{Ker}
\DeclareMathOperator{\Ad}{Ad}
\DeclareMathOperator{\Span}{Span}
\DeclareMathOperator{\Fix}{Fix}
\DeclareMathOperator{\md}{md}
\newcommand{\Aa}{\mathcal{A}}
\newcommand{\Bb}{\mathcal{B}}
\newcommand{\Ff}{\mathcal{F}}
\newcommand{\Ll}{\mathcal{L}}
\newcommand{\Ss}{\mathcal{S}}
\newcommand{\Mm}{\mathcal{M}}
\newcommand{\Ee}{\mathcal{E}}
\newcommand{\Qq}{\mathcal{Q}}
\newcommand{\Tt}{\mathcal{T}}
\theoremstyle{plain}
\newtheorem{theorem}{Theorem}[section]
\newtheorem{lemma}[theorem]{Lemma}
\newtheorem{proposition}[theorem]{Proposition}
\newtheorem{corollary}[theorem]{Corollary}
\theoremstyle{definition}
\newtheorem{defi}[theorem]{Definition}
\newtheorem{remark}[theorem]{Remark}
\newtheorem{notation}[theorem]{Notation}
\newtheorem{example}[theorem]{Example}
\numberwithin{equation}{section}
\begin{document}

\begin{abstract}
We continue the study of isomorphisms of tensor algebras associated to a C*-correspondences in the sense of Muhly and Solel. Inspired by by recent work of Davidson, Ramsey and Shalit, we solve isomorphism problems for tensor algebras arising from weighted partial dynamical systems. We provide complete bounded / isometric classification results for tensor algebras arising from weighted partial systems, both in terms of the C*-correspondences associated to them, and in terms of the original dynamics. We use this to show that the isometric isomorphism and algebraic / bounded isomorphism problems are two distinct problems, that require separate criteria to be solved. Our methods yield alternative proofs to classification results for Peters' semi-crossed product due to Davidson and Katsoulis and for multiplicity-free graph tensor algebras due to Katsoulis, Kribs and Solel.
\end{abstract}

\maketitle

\section{Introduction}

Non-self-adjoint operator algebras associated to dynamical / topological / analytic objects and their classification via these objects have been the subject of study by many authors for almost 50 years, beginning with the work of Arveson \cite{Arveson-measure-preserving} and Arveson and Josephson \cite{Arveson-Josephson}.

The main theme of this line of research, as is the main theme of this paper, is to identify the extent of which the dynamical objects classify their associated non-self-adjoint operator algebras. In this paper, we shall mainly focus on classification of non-self-adjoint tensor operator algebras arising from a single C*-correspondence over a commutative C*-algebra, although analogous work has been done in related contexts \cite{Davidson-Kakariadis-non-commutative-conj, Kakariadis-Katsoulis-multivar-nonco-isom, DRS, DRS-2, StochMat, Gurevich-SPS, Hartz-pick-isomorphism, Kakariadis-Shalit, Ramsey-multivar} to mention only some.

\subsection{History}
The origin of this line of research is the work of Arveson \cite{Arveson-measure-preserving} and Arveson and Josephson \cite{Arveson-Josephson}. Peters \cite{Peters-original} then continued this investigation where he introduced his semicrossed product algebra, and generalized the Arveson--Josephson classification. Hadwin and Hoover \cite{ConjTrans} improved Peters' classification by removing some of the restrictions on fixed points. For decades it was unknown if a restrictions on fixed points was necessary, and the problem of whether or not it was possible remove all restrictions came to be known as the conjugacy problem.

In a sequence of papers \cite{Muhly-Solel-Tensor-Rep, Muhly-Solel-Wold, Muhly-Solel-Morita}, Muhly and Solel established a non-commutative generalization of function theory, and in \cite{Muhly-Solel-Morita} they initiated a program of classifying all tensor algebras arising from C*-correspondences. In \cite{Muhly-Solel-Morita}, Muhly and Solel introduce a notion of aperiodicity of a C*-correspondence and classify tensor algebras arising from aperiodic C*-correspondences up to isometric isomorphism (See Section 5 in \cite{Muhly-Solel-Morita}).

Some of the first successful attempts to address the case of periodic C*-correspondences came from tensor algebras associated to countable directed graphs. Solel \cite{Solel-Quiver} (for finite directed graph and isometric isomorphisms) and Katsoulis and Kribs \cite{IsoDirGrAlg} (for countable directed graphs and algebraic / bounded isomorphisms) independently introduced methods of representations into upper triangular $2 \times 2$ matrices to solve isomorphism problems for tensor algebras associated to countable directed graphs, and show that for two countable directed graphs $G$ and $G'$, their tensor algebras $\tensor(G)$ and $\tensor(G')$ are isometrically / bounded isomorphic if and only if $G$ and $G'$ are isomorphic, and when $G$ (or $G'$) has either no sinks or no sources, this is equivalent to the existence of an algebraic isomorphism. We will provide an alternative proof of this result when the graphs $G$ and $G'$ are finite and multiplicity-free (See Corollary \ref{cor:graph-mf}).

The conjugacy problem was finally solved by Davidson and Katsoulis \cite{IsoConjAlg}, by adapting the methods of Hadwin and Hoover in \cite{ConjTrans}, and the methods of representations into upper triangular $2 \times 2$ matrices in \cite{IsoDirGrAlg, Solel-Quiver}. They prove that for two continuous maps $\sigma : X \rightarrow X$ and $\tau : Y \rightarrow Y$ on locally compact spaces $X$ and $Y$ respectively, the semicrossed product algebras $C_0(X) \times _{\sigma} \mathbb{Z}_+$ and $C_0(Y) \times _{\tau} \mathbb{Z}_+$ are algebraically / bounded / isometrically isomorphic if and only if $\sigma$ and $\tau$ are conjugate. We will provide an alternative proof to this result in the case when $X$ and $Y$ are compact (See Corollary \ref{cor:conjugacy-problem}).

In \cite{Multivar}, Davidson and Katsoulis associated an operator algebras $\Aa(X,\sigma)$ to a multivariable system $\sigma = (\sigma_1,...,\sigma_d)$ of continuous maps $\sigma_i : X \rightarrow X$ on a locally compact space $X$. They show that if $\sigma$ and $\tau$ are multivariable systems on locally compact $X$ and $Y$ respectively, and if $\Aa(X,\sigma)$ and $\Aa(Y,\tau)$ are algebraically isomorphic, then there is a homeomorphism $\gamma : X \rightarrow Y$ and an open cover $\{ U_{\alpha} | \alpha \in S_n \}$ of $X$ such that for each $\alpha \in S_n$ we have $\gamma^{-1}\tau_i \gamma |_{U_{\alpha}} = \sigma_{\alpha(i)} |_{U_{\alpha}}$. This equivalence relation between two multivariable systems is called \emph{piecewise conjugacy}.

A strong converse showing that if $(X,\sigma)$ and $(Y,\tau)$ are piecewise conjugate then $\Aa(X,\sigma)$ and $\Aa(Y,\tau)$ are \emph{isometrically} isomorphic was achieved under various assumptions (See Theorem 3.25 in \cite{Multivar}), but the general converse remains an open problem to this day (See Conjecture 3.31 and Theorem 3.33 in \cite{Multivar}).

Davidson and Roydor \cite{Davidson-Roydor-topo-graphs} then extend the ideas in \cite{Multivar} to the context of compact topological graphs in the sense of Katsura \cite{Katsura-topological-graphs}, which includes finite directed graphs, multivariable dynamics on compact spaces and more. Davidson and Roydor associate a tensor algebra $\tensor(\Ee)$ to every topological graph $\Ee$ in a way that generalizes the constructions mentioned above, and show that if $\Ee$ and $\Ff$ are two topological graphs with bounded isomorphic tensor algebras, then the topological graphs are locally conjugate, in the sense of Definition 4.3 in \cite{Davidson-Roydor-topo-graphs}. 

Again a strong converse going through isometric isomorphism was established when the dimension of the vertex space is at most 1. We will provide a proof that certain multiplicity free topological graphs also satisfy this strong converse (See Corollary \ref{cor:multip-free-part-sys}).

\subsection{General description}

In this article we provide classification results for tensor algebras arising from weighted partial systems (WPS for short). Our objective is to show that WPS yield tensor algebras which are still completely classifiable up to bounded / isometric isomorphisms, while covering as many examples of such classification results, for instance those for multiplicity free finite directed graphs \cite{IsoDirGrAlg, Solel-Quiver} and for Peters' semi-crossed product \cite{IsoConjAlg}. 

A weighted partial system on a compact space $X$ to is a pair $(\sigma,w)$ of $d$-tuples $(\sigma_1,...,\sigma_d)$ and $(w_1,...,w_d)$ of partially defined continuous functions $\sigma_i : X_i \rightarrow X$ and $w_i : X_i \rightarrow (0,\infty)$ for $X_i$ clopen. WPS generalize many classical constructions such as non-negative matrices, continuous function on a compact space, multivariable systems, distributed function systems, graph directed systems and more. 

To each WPS $(\sigma,w)$ we associate a multiplicity free topological quiver (in the sense of \cite{Muhly-Tomforde}) that encodes some information on it. This topological quiver gives rise to a C*-correspondence $C(\sigma,w)$, as constructed in \cite{Muhly-Tomforde}. We completely characterize these C*-correspondences up to unitary isomorphism and similarity, in terms of conjugacy relations between the WPS that we call \emph{branch-transition conjugacy} and \emph{weighted-orbit conjugacy} respectively.

We then associate a tensor algebra $\tensor(\sigma,w)$ to $C(\sigma,w)$ as one usually does for general C*-correspondences \cite{Muhly-Solel-Tensor-Rep, Muhly-Solel-Wold, Muhly-Solel-Morita} and classify these tensor algebras up to isometric / bounded isomorphism and in some cases up to algebraic isomorphism.

The following are our main results (See Theorems \ref{theorem:alg-bdd-main} and \ref{theorem:isom-main}). Suppose $(\sigma,w)$ and $(\tau,u)$ are WPS over compact $X$ and $Y$ respectively. 
\begin{enumerate}
\item
$\tensor(\sigma,w)$ and $\tensor(\tau,u)$ are isometrically isomorphic if and only if $C(\sigma,w)$ and $C(\tau,u)$ are unitarily isomorphic if and only if $(\sigma,w)$ and $(\tau,u)$ are branch-transition conjugate.
\item
$\tensor(\sigma,w)$ and $\tensor(\tau,u)$ are bounded isomorphic if and only if $C(\sigma,w)$ and $C(\tau,u)$ are similar if and only if $(\sigma,w)$ and $(\tau,u)$ are weighted-path conjugate. If in addition the clopen sets $X_i$ (which are the domains of each $\sigma_i$) cover $X$, the above is equivalent to having an algebraic isomorphism between $\tensor(\sigma,w)$ and $\tensor(\tau,u)$.
\end{enumerate}

The solution to these isomorphism problems requires an adaptation of a new method in the analysis of character spaces due to Davidson, Ramsey and Shalit in \cite{DRS} used in the solution of isomorphism problems of universal operator algebras associated to tuples of operators subject to homogeneous polynomial constraints.

One of the main thrusts of this paper is the use of these classification results to show that the (completely) isometric isomorphism and algebraic / (completely) bounded isomorphism problems are distinct in the sense that they require separate criteria to be solved (See Example \ref{ex:isometric-vs-bounded}).

\subsection{Structure of paper}
This paper contains eight sections, including this introductory section. 

In Section \ref{sec:Hilb-GNS-quiver} we discuss some preliminary theory of Hilbert C*-correspondences, discuss three different notions of isomorphisms of C*-correspondences called unitary isomorphism, similarity and isomorphism, and construct the GNS C*-correspondence associated to a completely positive map in the sense of Chapter 5 of \cite{Lance}, and quiver C*-correspondences of a topological quiver in the sense of \cite{Muhly-Tomforde}.

In Section \ref{sec:WPS} we introduce the notion of a weighted partial system, and define three different notions of conjugacy between WPS called branch-transition conjugacy, weighted-orbit conjugacy and graph conjugacy. We then use Section \ref{sec:Hilb-GNS-quiver} to associate a C*-correspondence to every WPS in such a way that the three conjugacy relations above correspond to unitary isomorphism, similarity and isomorphism between the C*-correspondences. We give examples that show that these three conjugacy relations are distinct.

In Section \ref{sec:tensor-algebras} we discuss the general theory of tensor algebras arising from C* correspondences, and develop the theory of semi-graded isomorphisms by building up on ideas from Section 5 of \cite{Muhly-Solel-Morita} and Section 6 of \cite{StochMat}.

In Section \ref{sec:univ-prop-auto-cont} we show that tensor algebras arising from the C*-correspondence of a WPS possess a certain universal property by using tools from Section 3 of \cite{ExelCroPro}, and find conditions on the WPS to ensure any isomorphism onto the tensor algebra of a WPS is automatically continuous, by applying ideas from \cite{Donsig-Hudson-Katsoulis}.

In Section \ref{sec:character-space} we compute the character space of a tensor algebra associated to a WPS by adapting the methods of \cite{ConjTrans}, and provide a useful characterization of semi-gradedness in terms of the character space.

In Section \ref{sec:main-results} we reduce the general isomorphism problem to the problem on semi-graded isomorphisms, and using Sections \ref{sec:character-space}, \ref{sec:univ-prop-auto-cont}, \ref{sec:tensor-algebras} and \ref{sec:WPS} in tandem with a new character space technique due to \cite{DRS}, we conclude our main classification results.

Finally, in Section \ref{sec:app-comp} we compare and apply our our theorems to other tensor algebra constructions arising from non-negative matrices, single variable dynamics and partial systems with disjoint graphs, and show how in some cases we can apply our results to recover some previously obtained results in the literature.

\section{Hilbert C*-correspondences: GNS and quiver correspondences} \label{sec:Hilb-GNS-quiver}

In this section we give the basic definitions for Hilbert C*-correspondences, and continue with defining three notions of isomorphism between them. We then discuss two constructions of C*-correspondences, one associated to completely positive maps on C*-algebras, and another associated to topological quivers in the sense of \cite{Muhly-Tomforde}.

\subsection{Hilbert C*-correspondences}
We assume that the reader is familiar with the basic theory of Hilbert C*-modules, which can be found in \cite{Lance, Manu, Paschke-inner-prod}.
We only give a quick summary of basic notions and terminology as we proceed, so as to clarify our conventions.

Let $\Aa$ be a C*-algebra and $E$ a Hilbert C*-module over $\Aa$. We denote by $\Ll(E)$ the collection of adjointable operators on $E$. If in addition $E$ has a left $\Aa$-module structure given by a *-homomorphism $\phi : \mathcal{A} \rightarrow \mathcal{L}(E)$, we call $E$ a Hilbert C*-correspondence over $\Aa$. We often suppress notation and write $a \cdot \xi : =\phi(a)\xi$.

A key notion of C*-correspondences is the internal tensor product. If $E$ is a C*-correspondence over $\Aa$ with left action $\phi$, and $F$ is a C*-correspondence over $\Aa$ with left action $\psi$, then on the algebraic tensor product $E\otimes_{alg}F$ one defines an $\Aa$-valued pre-inner product satisfying $\la x_1 \otimes y_1 , x_2 \otimes y_2 \ra = \la y_1 , \psi(\la x_1 , x_2 \ra) y_2 \ra$ on simple tensors. The usual Hausdorff completion process with respect to the norm induced by this inner product, yields the internal Hilbert C*-module tensor product of $E$ and $F$, denoted by $E\otimes F$ or $E\otimes_{\psi}F$, which is a C*-correspondence over $\Aa$ with left action $\phi \otimes Id_{F}$.

We now introduce certain types of morphisms between $C^*$-correspondences that arise naturally in the context of tensor algebras.

\begin{defi}
Let $E$ and $F$ be $C^*$-correspondences over the C*-algebras $\Aa$ and $\Bb$ respectively, let $\rho: \Aa \rightarrow \Bb$ be a *-isomorphism. Then we define the following:
\begin{enumerate}
\item
A $\rho$-bimodule map $V: E \rightarrow F$ is a map satisfying $V(a\xi b) = \rho(a) V(\xi) \rho(b)$.
\item
A \emph{bounded} $\rho$-bimodule map $V : E \rightarrow F$ is called a $\rho$-correspondence map.
\item
A $\rho$-bimodule map $V: E\rightarrow F$ is called $\rho$-adjointable if there exists $\rho^{-1}$-bimodule adjoint $V^* : F \rightarrow E$. That is, for $\xi \in F$ and $\eta \in E$,
$$
\la V^*(\xi) , \eta \ra = \rho^{-1} ( \la \xi, V(\eta) \ra )
$$
\end{enumerate}
\end{defi}

We note in passing that a $\rho$-adjointable map $V : E \rightarrow F$ is automatically bounded by the Uniform Boundedness Principle, where the $\rho$-adjoint $V^* : F \rightarrow E$ is a $\rho^{-1}$-correspondence map.

Given a *-isomorphism $\rho : \Aa \rightarrow \Bb$ and a C*-correspondence $F$ over $\Bb$, we may define a C*-correspondence structure $F_{\rho}$ over $\Aa$ on the set $F$. For $a\in \Aa$ and $\xi \in F$, we define left and right actions given by
$$
a \cdot \xi := \rho(a) \xi \ \ \text{and} \ \ \xi \cdot a := \xi \rho(a)
$$
and $\Aa$-valued inner product, given for $\xi,\eta \in F$ by
$$
\la \xi, \eta \ra_{\rho} = \rho^{-1}(\la \xi, \eta \ra)
$$
This construction turns $F$ into a C*-correspondence over $\Aa$, satisfies $(F_{\rho})_{\rho^{-1}} = F$ as C*-corrspondences over $\Bb$, and behaves well with respect to the internal tensor products. That is, if $F, F'$ are C*-correspondences over $\Bb$ and $\rho:\Aa \rightarrow \Bb$ is a *-isomorphism, then $(F\otimes_{\Bb} F')_{\rho}$ is naturally unitarily isomorphic to $F_{\rho} \otimes_{\Aa}F'_{\rho}$.

Next we show that tensor products of $\rho$-correspondence maps exist even when the maps are not necessarily adjointable. 

\begin{proposition}
Let $E, E'$ be C*-correspondences over $\Aa$ and $F,F'$ be C*-correspondences over $\Bb$. Suppose $V: E \rightarrow F$, $W:E' \rightarrow F'$ are $\rho$-correspondence maps for some *-isomorphism $\rho : \Aa \rightarrow \Bb$. Then there exists a unique $\rho$-correspondence map $V\otimes W : E\otimes E' \rightarrow F \otimes F'$ defined on simple tensors by $(V\otimes W)(\xi\otimes \eta) = V\xi \otimes W\eta$, such that $\|V \otimes W\| \leq \|V \| \cdot \| W\|$.
\end{proposition}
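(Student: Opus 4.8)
The plan is to reduce to the untwisted case $\rho = \mathrm{id}$, isolate the one module-theoretic estimate that makes the construction work in the absence of adjointability, and then assemble the map and verify the norm bound.

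First I would reduce to $\rho = \mathrm{id}_\Aa$. Regarding $V$ as a map into the twisted correspondence $F_\rho$ over $\Aa$, one checks immediately that it becomes an ordinary $\Aa$-correspondence map $V \colon E \to F_\rho$ of the same norm, and similarly $W \colon E' \to F'_\rho$. Granting the proposition for the identity $*$-isomorphism, we obtain an $\Aa$-correspondence map $V \otimes W \colon E \otimes_\Aa E' \to F_\rho \otimes_\Aa F'_\rho$ with the stated formula on simple tensors and $\|V \otimes W\| \le \|V\|\,\|W\|$; composing with the natural (isometric) unitary $F_\rho \otimes_\Aa F'_\rho \cong (F \otimes_\Bb F')_\rho$ and unwinding the $(\,\cdot\,)_\rho$ construction turns this into the desired $\rho$-correspondence map $E \otimes E' \to F \otimes F'$. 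Uniqueness is automatic since simple tensors span a dense subspace and the map is required to be bounded, and well-definedness on the algebraic tensor product balanced over $\Aa$ follows from $\Aa$-linearity of $V$ and $W$ together with $V(a\cdot\xi) = \rho(a)\cdot V(\xi)$, $W(a\cdot\xi) = \rho(a)\cdot W(\xi)$. So from now on assume $\rho = \mathrm{id}$ and write $T, S$ for correspondence maps over a fixed C*-algebra $\Aa$.

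The heart of the matter — and the point where non-adjointability must be dealt with — is the estimate that a bounded right $\Aa$-module map $T \colon G \to H$ satisfies $\la T\xi, T\xi \ra \le \|T\|^2 \la \xi, \xi \ra$ for all $\xi \in G$. If $T$ were adjointable this is the one-line computation $\la T\xi, T\xi \ra = \la \xi, T^*T\xi \ra \le \|T^*T\|\, \la \xi, \xi \ra$, but in general I would argue by an approximate polar decomposition: normalising $\|T\| = 1$ and putting $a_\varepsilon = \la \xi, \xi \ra + \varepsilon 1 \in \widetilde{\Aa}$ (positive and invertible), note that $G$ is also a Hilbert $\widetilde{\Aa}$-module, that $T$ is automatically $\widetilde{\Aa}$-linear, and that $\la \xi, \xi \ra \le a_\varepsilon$ forces $\la \xi a_\varepsilon^{-1/2}, \xi a_\varepsilon^{-1/2}\ra = a_\varepsilon^{-1/2} \la \xi,\xi\ra a_\varepsilon^{-1/2} \le 1$, hence $\|\xi a_\varepsilon^{-1/2}\| \le 1$. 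Applying $T$ and using $\|T\| = 1$ gives $\|(T\xi)a_\varepsilon^{-1/2}\| \le 1$, which rearranges to $\la T\xi, T\xi\ra \le a_\varepsilon$; letting $\varepsilon \to 0$ finishes it. The $n$-variable upgrade $[\la T\xi_i, T\xi_j\ra]_{i,j} \le \|T\|^2\, [\la \xi_i, \xi_j\ra]_{i,j}$ in $M_n(\Aa)$ then follows formally: apply the scalar case to the elements $\sum_i \xi_i a_i \in G$ for $(a_i) \in \Aa^n$, recognise the resulting inequality as $\la \mathbf a, B \mathbf a\ra \le \|T\|^2 \la \mathbf a, A \mathbf a\ra$ on the Hilbert module $\Aa^n$, where $A = [\la\xi_i,\xi_j\ra]$ and $B = [\la T\xi_i, T\xi_j\ra]$, and invoke the standard fact that positivity of a self-adjoint element of $M_n(\Aa)$ is detected by pairing against columns in $\Aa^n$. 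I expect this paragraph to be the only genuinely non-formal step.

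With that in hand, the construction and estimate for $T \otimes S$ are bookkeeping with Hilbert-module inner products. Define $T \otimes S$ on the algebraic tensor product by $\sum_i \xi_i \otimes \eta_i \mapsto \sum_i T\xi_i \otimes S\eta_i$; as noted this descends to $G \otimes_\Aa G'$ and is the unique bounded candidate. For the norm bound, let $\psi$ denote the left action of $\Aa$ on $H'$ and $\psi^{(n)}$ its amplification to $M_n(\Aa)$, and compute $\| \sum_i T\xi_i \otimes S\eta_i \|^2 = \| \la (S\eta_i)_i, \psi^{(n)}(B)\,(S\eta_i)_i \ra \|$ with $B = [\la T\xi_i, T\xi_j \ra]$. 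Since $\psi^{(n)}$ is a $*$-homomorphism it is order preserving, so by the key estimate this is at most $\|T\|^2 \| \la (S\eta_i)_i, \psi^{(n)}(A)\,(S\eta_i)_i\ra \|$ with $A = [\la \xi_i, \xi_j\ra] \ge 0$. Writing $A = D^*D$ with $D = [d_{k\ell}] \in M_n(\Aa)$ and using that $S$ intertwines the left actions, the inner product $\la (S\eta_i)_i, \psi^{(n)}(A)(S\eta_i)_i\ra$ equals $\sum_k \la S\nu_k, S\nu_k\ra$ where $\nu_k = \sum_i d_{ki}\cdot \eta_i \in G'$; the scalar case of the key estimate applied to each $\nu_k$ bounds this by $\|S\|^2 \sum_k \la \nu_k, \nu_k\ra$, whose norm is $\|S\|^2 \| \sum_i \xi_i \otimes \eta_i \|^2$. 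Chaining the three inequalities gives $\| \sum_i T\xi_i \otimes S\eta_i \| \le \|T\|\,\|S\|\,\| \sum_i \xi_i \otimes \eta_i \|$, so $T \otimes S$ extends continuously to the required correspondence map with $\|T \otimes S\| \le \|T\|\,\|S\|$, completing the proof after unwinding the reduction of the first paragraph.
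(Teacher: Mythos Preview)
Your proof is correct and shares its first move with the paper: both reduce to $\rho = \mathrm{id}_{\Aa}$ by passing to the twisted correspondences $F_\rho$, $F'_\rho$ and using $(F\otimes_{\Bb} F')_\rho \cong F_\rho \otimes_{\Aa} F'_\rho$. After that reduction, however, the paper simply invokes item (1) of Subsection 8.2.12 in Blecher--Le~Merdy, whereas you give a self-contained proof of that cited fact. Your key lemma $\la T\xi, T\xi\ra \le \|T\|^2 \la \xi,\xi\ra$ for bounded (not necessarily adjointable) module maps, together with its matricial amplification and the inner-product bookkeeping that follows, is exactly the standard argument underlying the Blecher--Le~Merdy result. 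So the approaches coincide at the structural level; you have unpacked the black-box citation, which has the advantage of making explicit why adjointability is not needed --- a point the paper emphasises elsewhere but does not prove here.
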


\begin{proof}
By the preceding discussion, looking at $F_{\rho}$, $F'_{\rho}$  and $(F\otimes_{\Bb} F')_{\rho} \cong F_{\rho} \otimes_{\Aa} F'_{\rho}$ instead, we may assume without loss of generality that $\Aa = \Bb$ and that $\rho = Id_{\Aa}$. Then, by item (1) of Subsection 8.2.12 of \cite{BlecherLeMerdy} the desired result follows.
\end{proof}

Hence, if $V: E \rightarrow F$ is a $\rho$-correspondence map, the maps $V^{\otimes n} : E^{\otimes n} \rightarrow F^{\otimes n}$ are $\rho$-correspondence maps, and are bounded with $\|V^{\otimes n} \| \leq \|V\|^n$. If in addition to that there exists $C > 0$ such that for all $n\in \nn$ we have $\|V^{\otimes n} \| \leq C$, we say that $V$ is \emph{tensor-power bounded}.

\begin{defi} \label{defi:isomorphisms-C-corresp}
Let $E$ and $F$ be $C^*$-correspondences over the C*-algebras $\Aa$ and $\Bb$ respectively, and let $\rho: \Aa \rightarrow \Bb$ be a *-isomorphism. 
\begin{enumerate}
\item
A $\rho$-correspondence map $V : E \rightarrow F$ is called a $\rho$-isomorphism if $V$ is bijective.
\item
A $\rho$-correspondence map $V : E \rightarrow F$ is called a $\rho$-similarity if $V$ is bijective and $V$ and $V^{-1}$ are tensor-power bounded.
\item
A map $U : E\rightarrow F$ is called a $\rho$-unitary if $U$ is a surjective isometric $\rho$-correspondence map.
\end{enumerate}
We will say that $E$ and $F$ are isomorphic / similar / unitarily isomorphic if there exist a *-isomorphism $\rho : \Aa \rightarrow \Bb$ and a $\rho$ - isomorphism / similarity / unitary $V: E \rightarrow F$ respectively.
\end{defi}

\begin{remark} \label{remark:adjointable-isomorphism-implies-unitary}
It turns out that $U : E\rightarrow F$ is $\rho$-unitary if and only if $U$ is $\rho$-adjointable and $U^*U = Id_E$ and $UU^* = Id_F$. In this case, we see that $U$ is a $\rho$-similarity. 

If $V$ is a $\rho$-\emph{adjointable} $\rho$-isomorphism, then $V^*V \in \Ll(E)$ is an $Id$-isomorphism and $V|V|^{-1}$ defines a $\rho$-unitary between $E$ and $F$. Hence, we note that in general, we do not assume that $\rho$-correspondence maps are $\rho$-adjointable. In fact, in Example \ref{ex:non-adjointable} we will see a $\rho$-similarity which is not $\rho$-adjointable.
\end{remark}

\subsection{GNS construction}
We now describe the general GNS (or KSGNS) construction associated to a completely positive map on a unital $C^*$-algebra. This construction is done in detail in \cite{Lance} and goes back to Paschke in \cite{Paschke-inner-prod}. 

Let $\Aa$ be a unital C*-algebra, and let $S$ be a completely positive map on $\Aa$. The GNS representation of $S$ is a pair $(GNS(S),\xi_{S})$ consisting of a Hilbert C*-correspondence $GNS(S)$ and a vector $\xi_{S} \in GNS(S)$ such that $S(a) = \la \xi_{S}, a\xi_{S} \ra$. 

$GNS(S)$ is defined as the C*-correspondence $\Aa \otimes_{S} \Aa$ which is the Hausdorff completion of the algebraic tensor product $\Aa \otimes \Aa$ with respect to the inner product and bimodule actions given respectively for $a,b,c,d\in \Aa$ by
$$
\la a\otimes b , c \otimes d \ra = b^*S(a^*c)d \ \ \text{and} \ \ a\cdot (b\otimes c) \cdot d = ab \otimes cd
$$
The vector of this correspondence is then given by $\xi_{S} = 1\otimes 1$, and clearly satisfies $S(a) = \la \xi_{S}, a\xi_{S} \ra$.

For an up-to-date account on the GNS construction and its associated Toeplitz, Cuntz-Pimsner and relative Cuntz-Pimsner algebras, see Section 3 of \cite{ExelCroPro}.

\subsection{Topological quivers and their C*-correspondences}
Another type of dynamical objects that turn up in our analysis are topological quivers, in the sense of \cite{Muhly-Tomforde}. See Subsection 3.3 in \cite{Muhly-Tomforde} for some of the classes of examples generalized by topological quivers.

We modify the definition of \cite{Muhly-Tomforde} to fit our settings and we note that it is the reversed of the one given in Definition 3.1 in \cite{Muhly-Tomforde}. Still the following choice of range and source maps is the one commonly used in higher rank graph algebras in the works of Kumjian and Pask in \cite{Kumj-Pask} and in topological graphs in the works of Katsura \cite{Katsura-topological-graphs, Topo-graph-Kats}. This choice has the additional advantage that composition of operators is identified with concatenation of edges in the usual, non-reversed way. 

\begin{defi} \label{defi:topological-quiver}
A \emph{topological quiver} is a quintuple $\Qq = (E^0,E^1,r,s,\lambda)$ such that $E^0$ and $E^1$ are compact Hausdorff spaces of vertices and edges respectively, such that $r:E^1 \rightarrow E^0$ is a continuous map, $s:E^1 \rightarrow E^0$ is an open continuous map and $\lambda = \{\lambda_v \}_{v\in E^0}$ are Radon measures on $E^1$ such that $\supp(\lambda_v) = s^{-1}(v)$ and for every $\xi \in C(E^1)$ the map $v\mapsto \int_{E^1}\xi(e)d\lambda_v(e)$ is in $C(E^0)$.
We call $\Qq = (E^0,E^1,r,s,\lambda)$ a \emph{topological graph} if in addition $s : E^1 \rightarrow E^0$ is a local homeomorphism, and $\lambda_v$ is counting measure on the discrete set $s^{-1}(v)$.
\end{defi}

Note that we do not assume that $E^0$ and $E^1$ are second countable as in Definition 3.1 in \cite{Muhly-Tomforde}, but for our purposes it will be enough to assume that $E^0$ and $E^1$ are compact. The definition of a \emph{topological graph} above is justified by Example 3.20 of \cite{Muhly-Tomforde}, as this gives rise to a topological graph in the sense of Katsura \cite{Katsura-topological-graphs}.

Now that we have defined the notion of topological quiver, we wish to construct a C*-correspondence from it, just as we have from a completely positive map on a C*-algebra. Let $\Qq = (E^0,E^1,r,s,\lambda)$ be a topological quiver. We let $\Aa = C(E^0)$ and define an $\Aa$-valued inner product and bimodule actions on $C(E^1)$ for $v\in E^0$, $\xi,\eta \in C(E^1)$ and $f,g\in C(E^0)$ by setting 
$$
\la \xi, \eta \ra(v) := \int_{s^{-1}(v)}\overline{\xi(e)}\eta(e)d\lambda_v(e)  \ \ \text{and} \ \ (f\cdot \xi \cdot g)(e): = f(r(e))\xi(e)g(s(e))
$$
With this inner product and bimodule actions, $C(E^1)$ becomes a C*-correspondence over $C(E^0)$, and we call it the quiver C*-correspondence associated to $\Qq$, which we denote by $C(\Qq)$. We note that when $\Qq$ is a topological graph in the sense of Katsura \cite{Katsura-topological-graphs}, this C*-correspondence coincides with the one that is usually associated to a topological graph, also in the sense of Katsura \cite{Katsura-topological-graphs}.

The following is a restatement of Definition 6.1 in \cite{Muhly-Tomforde} that fits our reversed definition of topological quiver.

\begin{defi} \label{defi:paths}
Let $\Qq = (E^0,E^1,r,s,\lambda)$ be a topological quiver. A \emph{path} in $\Qq$ is a finite sequence of edges $\mu = \mu_n...\mu_1$ with $r(\mu_i) = s(\mu_{i+1})$ for $1 \leq i \leq n-1$. We say that such a path has length $|\mu| :=n$. Let $E^n$ denote the collection of paths of length $n$. We extend the maps $r$ and $s$ to $E^n$ by setting $r(\mu) = r(\mu_n)$ and $s(\mu) = s(\mu_1)$. We endow $E^n$ with the topology inherited from $E^1 \times ... \times E^1$. Since $s$ is continuous and open and $r$ is continuous on $E^1$, we see that this persists when $s$ and $r$ are considered as maps on $E^n$.
\end{defi}

\begin{remark} \label{remark:tensor-iterate}
In Section 6 of \cite{Muhly-Tomforde} one defines Radon measures $\lambda_v^n$ inductively on $E^n$ which then makes the quintuple $\Qq^n=(E^0,E^n,r,s,\lambda^n)$ into a topological quiver in its own right. 

As mentioned in the discussion preceding Remark 6.3 in \cite{Muhly-Tomforde}, it turns out that the quiver C*-correspondence of $\Qq^n$ coincides with the $n$-th internal tensor iterate for the C*-correspondence of $\Qq$. In other words, we have that $C(\Qq)^{\otimes n}$ and $C(\Qq^n)$ are naturally (Id-)unitarily isomorphic as C*-correspondences over $C(E^0)$, via the map sending simple tensors $\xi_n \otimes... \otimes \xi_1 \in C(\Qq)^{\otimes n}$ to the function $\mu_n...\mu_1 \mapsto \xi_n(\mu_n)\cdot ... \cdot \xi_1(\mu_1)$ in $C(\Qq^n)$.
\end{remark}

In our work, we will mostly be dealing with multiplicity free topological quivers.

\begin{defi} \label{defi:mult-free}
Let $\Qq = (E^0,E^1,r,s,\lambda)$ be a topological quiver. We say that $\Qq$ is multiplicity free if for any two edges $e,e'\in E^1$, if $r(e) = r(e')$ and $s(e)=s(e')$ then $e = e'$.
\end{defi}

The advantage of multiplicity free topological quivers is that they can be identified as \emph{closed} subsets of $E^0 \times E^0$ in a canonical way. If $\Qq = (E^0,E^1,r,s,\lambda)$ is multiplicity free, we define a map $r \times s : E^1 \rightarrow E^0 \times E^0$ given by $(r \times s) (e) = (r(e),s(e))$. $r \times s$ is then an injective continuous map with closed range (as $E^1$ is compact) and so $E^1$ is homeomorphic to its image under $r \times s$, and $\Qq$ is isomorphic to the topological quiver $\Qq':= (E^0,(r \times s)(E^1), \pi_r, \pi_s, \lambda')$ where $\pi_r, \pi_s : E^0 \times E^0 \rightarrow E^0$ are given by $\pi_r(y,x) = y$ and $\pi_s(y,x) = x$ and $\lambda' = \{\lambda'_v\}$ is given by $\lambda'_v(E) = \lambda_v((r \times s)^{-1}(E))$. 

We will often think of multiplicity free topological quivers $\Qq = (E^0,E^1,r,s,\lambda)$ with $E^1 = (r \times s)(E^1)$ already a closed subspace of $E^0 \times E^0$, so that $r=\pi_r$, $s=\pi_s$ and $\lambda = \lambda'$.

\section{Weighted partial systems} \label{sec:WPS}

In this section we define the notion of weighted partial system and introduce two conjugacy relations between WPS. We show that the GNS correspondence and quiver correspondence of a weighted partial system coincide, and further characterize when such C*-correspondences of two weighted partial systems are unitarily isomorphic / similar in terms of the new conjugacy relations.

\subsection{Definitions and constructions}
We define the notion of weighted partial system, and then associate a completely positive map and a topological quiver to it. Examples of these constructs will appear in the next subsection.

\begin{defi}
Let $X$ be a compact space. A $d$-variable \emph{weighted partial system} (WPS for short) is a pair $(\sigma,w)$ where $\sigma = (\sigma_1,...,\sigma_d)$ is comprised of continuous maps $\sigma_i: X_i \rightarrow X$ where each $X_i$ is clopen in $X$, and $w = (w_1,...,w_d)$ is comprised of continuous non-vanishing weights $w_i : X_i \rightarrow (0,\infty)$. 
\end{defi}

When $w_i = 1$ for all $1 \leq i \leq d$, then the information on the weights is redundant, and in this case we replace $(\sigma,1)$ by $\sigma$ and call it a $d$-variable (clopen) \emph{partial system}. Partial systems were used under the name of "quantised dynamical systems" by Kakariadis and Shalit to classify tensor algebras associated to monomial ideals in the ring of polynomials in non-commuting variables, up to $Q$-$P$-local piecewise conjugacy (See Definition 8.6 and Corollary 8.12 in \cite{Kakariadis-Shalit}).

\begin{defi} \label{def:quiver-and-positive-op}
Let $(\sigma,w)$ be a $d$-variable WPS over a compact $X$.
\begin{enumerate}
\item
The positive operator associated to $(\sigma,w)$ is a positive linear map $P(\sigma, w) : C(X) \rightarrow C(X)$ given by
$$
P(\sigma, w)(f)(x) = \sum_{i : x \in X_i} w_i(x) f(\sigma_i(x))
$$

\item
The quiver associated to $(\sigma,w)$ is the quintuple $\Qq(\sigma,w) = (X,Gr(\sigma),r,s,P(\sigma,w))$ where $Gr(\sigma)$ is the (union) cograph of $\sigma$, i.e. the union of the cographs of $\sigma_i$ given by 
$$
Gr(\sigma) = \cup_{i=1}^d \{ \ (\sigma_i(x),x) \ | \ x\in X_i \ \}
$$
The range and source maps are given by $r(\sigma_i(x),x) = \sigma_i(x)$, $s(\sigma_i(x),x) = x$ and Radon measures
$$
P(\sigma,w)_x = \sum_{i: x \in X_i}w_i(x)\delta_{(\sigma_i(x),x)}
$$
\end{enumerate}
\end{defi}

We will refer to $Gr(\sigma)$ as the \emph{graph} of the system $\sigma$, even though it is actually the cograph of the system $\sigma$. Moreover, note that the graph we have constructed is multiplicity-free, even though the original system $(\sigma,w)$ need not be multiplicity-free. In other words, if for some $x\in X$ we have an index $1 \leq i \leq d$ such that $\sigma_i(x) = \sigma_j(x)$ for some other index $j\neq i$, then $\sigma$ is not multiplicity free, yet in $Gr(\sigma)$ we have $(\sigma_i(x),x) = (\sigma_j(x),x)$. Instead, part of the information on the multiplicity of $\sigma_i(x)$ is encoded in the measure $P(\sigma,w)_x$, depending on the weights $w_j(x)$ for these $j$ that satisfy $\sigma_j(x) = \sigma_i(x)$.

We also abused notation above and decided to denote both the positive map and the collection of Radon measures of the topological quiver of $(\sigma,w)$ in the same way, the reason being the 1-1 correspondence between positive maps $S : C(X) \rightarrow C(X)$ and uniformly bounded maps $x \mapsto P_x \in M(X)$ given by the relation $S(f)(x) = \int_X f(y) dP_x(y)$, as in Lemma 3.27 in \cite{ExelCroPro}.

For a WPS $(\sigma,w)$, it is the continuity of $w_i$ that guarantees that (integration against or application of) $P(\sigma,w)$ sends continuous functions to continuous functions, and indeed $P(\sigma,w)$ is a positive linear map / (uniformly) finite Radon measures, and $\Qq(\sigma,w)$ defined above is a topological quiver by Lemma 3.30 of \cite{ExelCroPro}.

\begin{remark}
When proving that $\Qq(\sigma,w)$ is a topological quiver, we use the fact that $w_i$ never vanishes. This ensures that for every $x\in X$ we have 
$$
\supp(P(\sigma,w)_x) = \{(\sigma_1(x),x),...,(\sigma_d(x),x)\} = s^{-1}(x)
$$
and so by Lemma 3.30 of \cite{ExelCroPro} we have that $\Qq(\sigma,w)$ is a topological quiver. When some $w_i$ vanishes at a point $x\in X$, one may arrive at the situation where $\Qq(\sigma,w)$ is not a topological quiver according to our definition, because it is then possible that $\supp(P(\sigma,w)_x)$ does not contain the edge $(\sigma_i(x),x)$ while $s^{-1}(x)$ always does. See Section 3.5 and Example 3.35 in \cite{ExelCroPro} for this phenomenon, and other complications that arise in the more general context of positive operators on $C_0(X)$ and associated topological quivers.
\end{remark}

See \cite{MarkovCalg} and Subsection 3.5 \cite{ExelCroPro} where multiplicity free (as in Definition \ref{defi:mult-free}) topological quivers are similarly associated to positive maps $P : C_0(X) \rightarrow C_0(X)$ with closed support, and where their crossed product C*-algebras are investigated.

\subsection{Subclasses of weighted partial dynamics} \label{subsec:examples} We show that weighted partial systems encompass many different classical dynamical objects. When they have simpler forms, we compute the associated topological quiver and positive map as in Definition \ref{def:quiver-and-positive-op}. 

\subsubsection{Non-negative matrices} \label{subsec:nnm}

If $A = [A_{ij}]$ is a non-negative matrix indexed by a finite set $\Omega$, we associate a $|\Omega|$-variable WPS $(\sigma^A,w^A)$ to it by specifying $\Omega^A_i : = \{ \ j \in \Omega \ | \ A_{ij} >0 \ \}$ and define $\sigma^A_i : \Omega^A_i \rightarrow \Omega$ by setting $\sigma^A_i(j) = i$, and $w^A_i(j) = A_{ij}$. Note that some $\sigma^A_i$ may be the empty set function. This way the graph of the WPS is given by $Gr(\sigma) = Gr(A) : = \{ \ (i,j) \ | \ A_{ij} >0 \ \}$, the Radon measures by $P(\sigma^A,w^A)_j = \sum_{i \in \Omega} A_{ij} \delta_{(i,j)}$ and the positive map $P(\sigma^A, w^A)$ by $P(\sigma^A, w^A)(f)(j) = \sum_{i \in \Omega}A_{ij}f(i)$. An account of the theory of non-negative matrices, Markov chains and their graph structure can be found in \cite{Seneta}.

\subsubsection{Finite directed graphs} \label{subsec:fdg}

Let $(V,E,r,s)$ be a directed graph with finitely many edges and vertices. We can regard every $v\in V$ as comprising a clopen subset $\{v\}$ of $V$, and each edge $e\in E$ as (the unique) map from $\{s(e)\}$ to $\{r(e)\}$. With $w_e = 1$, the collection $\sigma^E = \{e\}_{e\in E}$ becomes a (weighted) partial system. We then see that $Gr(\sigma^E)$, being the regular union in $V \times V$, yields the \emph{multiplicity free} directed graph associated to $(V,E,r,s)$, That is, $Gr(\sigma^E) : = \{ (r(e),s(e)) | e\in E \}$. Denote by $m_{w,v} = |s^{-1}(v) \cap r^{-1}(w)|$ the multiplicity of edges starting at $v$ and ending at $w$. Then the Radon measures are given by $P(\sigma^E)_v = \sum_{w : (w,v)\in Gr(\sigma^E)} m_{w,v} \delta_{(w,v)}$, and the positive map is given by 
$$
P(\sigma^E)(f)(v) = \sum_{w: (w,v) \in Gr(\sigma^E)} m_{w,v}f(w)
$$
We note that a directed graph $(V,E,s,r)$ with finitely many edges and vertices can be encoded as a non-negative matrix $A_E = [m_{w,v}]$ indexed by $V$, so that the topological quiver and positive maps for this example and the previous one coincide, when $A$ is a $\{0,1\}$-matrix.

Finally, we note that unless we started with a multiplicity free directed graph $(V,E,r,s)$, the cardinality of $Gr(\sigma^E)$ is strictly less than that of $E$, as the set of edges $Gr(\sigma^E)$ does not contain multiple edges between two fixed vertices, and we see that the information on multiplicity is instead encoded into the Radon measures.

\subsubsection{Partially defined continuous maps} \label{subsec:pdcf}
For a compact space $X$, a clopen subset $X' \subset X$ and a continuous map $\sigma : X' \rightarrow X$, then we have that $\sigma$ is a partial system. The positive map $P(\sigma)(f) = f \circ \sigma$ is a *-homomorphism on $C(X)$, and in fact, all *-homomorphisms on $C(X)$ arise in this way via the commutative Gelfand-Naimark duality. The graph of the partial system $\sigma$ is then just $Gr(\sigma) = \{ \ (\sigma(x),x) \ | \ x\in X' \ \}$, and the Radon measures are just Dirac measures $P(\sigma)_x = \delta_{(\sigma(x),x)}$.

\subsubsection{Multivariable systems} \label{subsec:ms}
When $\sigma = (\sigma_1,...,\sigma_d)$ is a $d$-tuple of continuous maps defined on \emph{all} of $X$, the graph of $\sigma$ is just the union of the graphs of $\sigma_i$ as in Definition \ref{def:quiver-and-positive-op}, but the Radon measures yield the simpler form $P(\sigma)_x = \sum_{i=1}^d\delta_{(\sigma_i(x),x)}$ for all $x\in X$. The positive operator associated to $\sigma$ is then given by $P(\sigma)(f)(x) = \sum_{i=1}^d f(\sigma_i(x))$.

\subsubsection{Distributed function systems} \label{subsec:dfs}
If $\sigma = (\sigma_1,...,\sigma_d)$ is a multivariable system on a compact metric space $X$, and $p=(p_1,...,p_d)$ are continuous non-vanishing probabilities in the sense that for each $x\in X$ we have that $\sum_{i=1}^dp_i(x) = 1$, we call $(\sigma,p)$ a distributed function system. The positive operator associated to $(\sigma,p)$ given by $P(\sigma)(f)(x) = \sum_{i=1}^d p_i(x) f(\sigma_i(x))$ yields a Markov-Feller operator on $C(X)$ in the sense of \cite{Zaharopol}, and this is a concrete way of creating examples of such Markov-Feller operators. 

When the $p_i$ are constant and each $\sigma_i$ is a 1-1 strict contraction, we call $(\sigma,p)$ a \emph{distributed iterated function} system. Markov-Feller operators were used explicitly, especially when $p_i = \frac{1}{d}$ for all $i$, by Hutchinson in \cite{Hutchinson} and Barnsley in \cite{Barnsley-Fractals, Barnsley-Superfractals} to construct certain invariant measures on self-similar sets coming from $\sigma$.

\subsubsection{Graph directed systems / Mauldin-Williams graphs} \label{subsec:gds}
Let $\mathcal{E}:= (V,E,r,s)$ be a directed graph with finitely many vertices and edges, $\{X_v\}_{v\in V}$ a (finite) set of compact metric spaces and $\{\sigma_e\}_{e\in E}$ a (finite) set of 1-1 strict contractions $\sigma_e : X_{s(e)} \rightarrow X_{r(e)}$. Then we call the data $(\mathcal{E},\{X_v\}_{v\in V}, \{\sigma_e\}_{e\in E})$ a \emph{graph directed system} or \emph{Mauldin-Williams graph}. If we set $X = \sqcup_{v\in V} X_v $, then $(\sigma_e)_{e\in E}$ becomes a partial system over $X$. 

See \cite{Mauldin-Urbanski-book}, where Mauldin-Williams graphs are used to construct self-similar sets and iterated limit sets. See also \cite{MW-Hausdorff-dim} where the Hausdorff dimension of such iterated limit sets is computed in some cases.

\subsection{Branch-transition conjugacy}

We next define two of the main conjugacy relations between WPS. One particular conjugacy relation that we call \emph{branch-transition} conjugacy, will turn out to arise from isometric isomorphism of the associated operator algebra.

We say that $(\sigma,w)$ and $(\tau,u)$ $d$-variable and $d'$-variable WPS over compact spaces $X$ and $Y$ respectively are conjugate if one is a homeomorphic image of the other up to some permutation. That is, $d=d'$ and there is a homeomorphism $\gamma: X \rightarrow Y$ and a permutation $\alpha \in S_d$ such that $\gamma^{-1}\tau_{\alpha(i)}\gamma = \sigma_i$ and $u_{\alpha(i)} \circ \gamma^{-1} = w_i$ for all $1\leq i \leq d$. 

Conjugation of WPS is the most rigid notion of conjugation that we shall encounter in this work. We define a weaker notion of conjugation, that loses some information about multiplicities and weights of the WPS.

For an $s$-variable WPS $(\tau,u)$ on a compact space $Y$ and a homeomorphism $\gamma:X \rightarrow Y$ denote $\tau^{\gamma} = \gamma^{-1}\tau \gamma: = (\gamma^{-1}\tau_1 \gamma,...,\gamma^{-1}\tau_s\gamma)$ and $u^{\gamma} = u \gamma = (u_1 \gamma , ...,u_s \gamma)$.

\begin{defi}
Let $\sigma$ and $\tau$ be partial systems on compact spaces $X$ and $Y$ respectively. We say that $\sigma$ and $\tau$ are \emph{graph conjugate} if there exists a homeomorphism $\gamma: X \rightarrow Y$ such that $Gr(\sigma) = Gr(\tau^{\gamma})$. Equivalently, there exists a homeomorphism $\gamma: X \rightarrow Y$ such that the map $\gamma \times \gamma : Gr(\sigma) \rightarrow Gr(\tau)$ is a homeomorphism.
\end{defi}

Some natural sets that arise while considering graphs of partial systems are the sets of points for which some of the maps in the system coincide and / or sets of points for which they "branch out".

\begin{defi}
Let $\sigma$ be a $d$-variable partial system.
\begin{enumerate}
\item
A point $x\in X$ is a \emph{branching point} for $\sigma$ if there is some net $x_{\lambda} \rightarrow_{\lambda} x$ and two indices $i,j\in \{1,...,d\}$ such that $x_{\lambda} \in X_i \cap X_j$ and $\sigma_i(x_{\lambda}) \neq \sigma_j(x_{\lambda})$ for all $\lambda$ while $\sigma_i(x) = \sigma_j(x)$. 
\item
An edge $e\in Gr(\sigma)$ is a \emph{branching edge} for $Gr(\sigma)$ if there are two nets $\{e_{\lambda}\}$ and $\{f_{\lambda}\}$ converging to $e$ such that $s(e_{\lambda}) = s(f_{\lambda})$ while $r(e_{\lambda}) \neq r(f_{\lambda})$ for all $\lambda$.
\end{enumerate}
\end{defi}

\begin{remark} \label{rem:graph-conj-preserves-branching-edges}
If $e$ is a branching edge, then by taking subnets if necessary, we see that $s(e)$ is a branching point. However, if $s(e)$ is a branching point, $e$ may not be a branching edge. Still, every branching point is the source of some branching edge. Thus, if two partial systems $\sigma$ and $\tau$ are graph conjugate via $\gamma$, then $\sigma$ and $\tau^{\gamma}$ have the same sets of branching points and branching edges.
\end{remark}

\begin{defi}
Let $\sigma$ be a $d$-variable partial system and $I \subset \{1,...,d\}$ a non-empty subset of indices. 
\begin{enumerate}
\item
The coinciding set of $I$ is the set
$$
C(I) = \{ \ x\in \cap_{i\in I}X_i \ | \ \sigma_{i}(x) = \sigma_{j}(x) \ \forall i,j\in I \ \}
$$
\item
$x\in X$ is a \emph{coinciding point} for $Gr(\sigma)$ if there is some $I\subset \{1,...,d\}$ with $|I| \geq 2$ such that $x\in C(I)$.
\end{enumerate}
We also denote $B(I) : = \partial C(I)$ the topological boundary of $C(I)$ inside $X$.
\end{defi}

For $I\subset \{1,...,d\}$, since the maps $\sigma_i$ of a partial system are defined on clopen sets $X_i$, we see that $\cap_{i\in I}X_i$ is clopen, so that both $C(I)$ and $B(I)$ are in fact (relatively) closed subsets of $\cap_{i\in I}X_i$.

We next characterize branching points and branching edges in terms of boundaries of coinciding sets.

\begin{proposition}
Let $\sigma$ be a $d$-variable (clopen) partial system. 
\begin{enumerate}
\item
$x\in X$ is a branching point if and only if for some $I\subset \{1,...,d\}$ we have $x\in B(I)$.
\item
$e\in Gr(\sigma)$ is a branching edge for $Gr(\sigma)$ if and only if $s(e) \in B(I)$ for some $I\subset \{1,...,d\}$ so that $r(e) = \sigma_i(s(e))$ for some (and hence all) $i\in I$.
\end{enumerate}
\end{proposition}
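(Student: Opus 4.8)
The plan is to prove both equivalences by direct net/subnet arguments, using two structural facts already available: each $X_i$ is clopen (so finite intersections $\bigcap_{i\in I}X_i$ are clopen, and, as noted right after the definition of $C(I)$, both $C(I)$ and $B(I)=\partial C(I)$ are closed subsets of $\bigcap_{i\in I}X_i$, hence of $X$), and the index set $\{1,\dots,d\}$ is finite (so after passing to a subnet one may fix whichever pair of indices is relevant). I will also use that, for a multiplicity-free quiver realized as in Section~\ref{sec:Hilb-GNS-quiver}, $Gr(\sigma)\subseteq X\times X$ carries the subspace topology with $r,s$ the coordinate projections, so convergence of edges is coordinatewise.

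For (1), \emph{if} direction: assume $x\in B(I)$. Since $B(I)\subseteq\overline{C(I)}=C(I)$, we get $\sigma_i(x)=\sigma_j(x)$ for all $i,j\in I$. Since also $x\in\overline{X\setminus C(I)}$ while $\bigcap_{i\in I}X_i$ is an open neighbourhood of $x$, I can choose a net $x_\lambda\to x$ with $x_\lambda\in\bigl(\bigcap_{i\in I}X_i\bigr)\setminus C(I)$; for each $\lambda$ there is a pair $i_\lambda,j_\lambda\in I$ with $\sigma_{i_\lambda}(x_\lambda)\neq\sigma_{j_\lambda}(x_\lambda)$, and finiteness of $I$ lets me pass to a subnet on which this pair is constant, which exhibits $x$ as a branching point. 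Conversely, if $x$ is a branching point witnessed by $x_\lambda\to x$ and a pair $i,j$, I take $I=\{i,j\}$: clopenness of $X_i\cap X_j$ gives $x\in X_i\cap X_j$ with $\sigma_i(x)=\sigma_j(x)$, so $x\in C(I)$, while $x_\lambda\notin C(I)$, so $x\in\partial C(I)=B(I)$.

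For (2) I would reduce to (1). \emph{If} direction: put $x_0=s(e)$; by (1) and the argument above there is a net $x_\lambda\to x_0$ in $\bigl(\bigcap_{i\in I}X_i\bigr)\setminus C(I)$ and a fixed pair $i,j\in I$ with $\sigma_i(x_\lambda)\neq\sigma_j(x_\lambda)$. Set $e_\lambda=(\sigma_i(x_\lambda),x_\lambda)$ and $f_\lambda=(\sigma_j(x_\lambda),x_\lambda)$; these lie in $Gr(\sigma)$, have equal source $x_\lambda$ and distinct ranges, and by continuity of $\sigma_i,\sigma_j$ together with $\sigma_i(x_0)=\sigma_j(x_0)=r(e)$ (which holds since $x_0\in C(I)$, and this is also why the hypothesis $r(e)=\sigma_i(s(e))$ is independent of $i\in I$) both nets converge to $e$, so $e$ is a branching edge. \emph{Only if} direction: given nets $e_\lambda,f_\lambda\to e$ with common source $x_\lambda$ and $r(e_\lambda)\neq r(f_\lambda)$, write $e_\lambda=(\sigma_{i_\lambda}(x_\lambda),x_\lambda)$, $f_\lambda=(\sigma_{j_\lambda}(x_\lambda),x_\lambda)$, pass to a subnet making $i_\lambda\equiv i$ and $j_\lambda\equiv j$ (necessarily $i\neq j$), use clopenness of $X_i,X_j$ to get $x_0:=s(e)\in X_i\cap X_j$ and coordinatewise convergence to get $\sigma_i(x_0)=r(e)=\sigma_j(x_0)$; thus $x_0\in C(\{i,j\})$ while $x_\lambda\notin C(\{i,j\})$, so $x_0\in B(\{i,j\})$ and $r(e)=\sigma_i(s(e))$ with $i\in\{i,j\}$.

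The content here is light; the only point that needs genuine care, rather than being a routine verification, is the index bookkeeping along the nets: both "branching" notions carry existential quantifiers over indices that may drift with $\lambda$, whereas $B(I)$ is stated for a fixed finite set $I$, so one must justify that passing to a subnet truly stabilizes the pair of indices (and that in the edge case the stabilized indices are distinct), which is exactly where finiteness of $\{1,\dots,d\}$ is used.
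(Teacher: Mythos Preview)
Your proof is correct and follows essentially the same approach as the paper: both directions of (1) and (2) are handled by the same net/subnet arguments with the two-element set $I=\{i,j\}$, and the passage to a subnet to stabilize the varying indices is exactly what the paper does. If anything, you are slightly more explicit than the paper in justifying why the net in the ``if'' direction of (1) may be taken inside $\bigcap_{i\in I}X_i\setminus C(I)$ (using that $\bigcap_{i\in I}X_i$ is a clopen neighbourhood of $x$) and in noting that $B(I)\subseteq C(I)$ since $C(I)$ is closed.
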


\begin{proof}
We first prove (1). If $x\in B(I)$ for some $I \subset \{ 1,...,d\}$, there exists a net $\{x_{\lambda}\}$ in $\cap_{i\in I}X_i$ converging to $x$ such that for every $\lambda \in \Lambda$ there exist $i_{\lambda} \neq j_{\lambda}$ in $I$ such that $\sigma_{i_{\lambda}}(x_{\lambda}) \neq \sigma_{j_{\lambda}}(x)$. By passing to a subnet, we may arrange that $i_{\lambda_1} = i_{\lambda_2}$ and $j_{\lambda_1} = j_{\lambda_2}$ for all $\lambda_1, \lambda_2 \in \Lambda$ and so $x$ is a branching point.

For the converse, if $x\in X$ is a branching point, let $i,j\in \{1,...,d\}$ be two distinct indices and $\{x_{\lambda}\}$ a net in $X_i \cap X_j$ converging to $x$ such that $\sigma_i(x_{\lambda}) \neq \sigma_j(x_{\lambda})$, while $\sigma_i(x) = \sigma_j(x)$. Then by taking $I=\{i,j\}$ we have that $x\in C(I)$, and the existence of the above net shows that $x\in B(I)$.

Next, we prove (2). $s(e) \in B(I)$ for some $I\subset \{1,...,d\}$ so that $r(e) = \sigma_i(s(e))$ for some (and hence all) $i\in I$. Then by the above we have a net $\{x_{\lambda}\}$ in $\cap_{i\in I} X_i$ converging to $s(e)$, and two distinct indices $i,j$ in $I$ such that $\sigma_i(x_{\lambda}) \neq \sigma_j(x_{\lambda})$ for all $\lambda$, while $\sigma_i(s(e)) = \sigma_j(s(e)) = r(e)$. Then the nets of edges $e_{\lambda} = (\sigma_i(x_{\lambda}), x_{\lambda})$ and $f_{\lambda} = (\sigma_j(x_{\lambda}), x_{\lambda})$ converges to $e$, and have the same sources, and different ranges for every $\lambda$.

Conversely, if we have two nets $\{e_{\lambda}\}$ and $\{f_{\lambda}\}$ converging to $e$ such that $s(e_{\lambda}) = s(f_{\lambda})$ while $r(e_{\lambda}) \neq r(f_{\lambda})$ for all $\lambda$, by taking subnets as necessary, we may assume that $r(e_{\lambda}) = \sigma_i(s(e_{\lambda}))$, and $r(f_{\lambda}) = \sigma_j(s(f_{\lambda}))$ while $r(e) = \sigma_i(s(e)) = \sigma_j(s(e))$ for $i,j$ distinct. By taking $I=\{i,j\}$ we see that $s(e) \in B(I)$, while $r(e) = \sigma_i(s(e))$, and we are done.
\end{proof}

For an edge $e\in Gr(\sigma)$, we denote $I(e,\sigma) = \{ \ i \ | \ \sigma_i(s(e)) = r(e), \ s(e) \in X_i \ \}$, which is the set of all indices of maps that send $s(e)$ to $r(e)$.

\begin{defi}
Let $(\sigma,w)$ be a WPS over $X$. The weight induced on the graph of $\sigma$ is a function $w : Gr(\sigma) \rightarrow (0,\infty)$ given for any edge $e=(y,x) \in Gr(\sigma)$ by
$$
w(e) = \sum_{i \in I(e,\sigma)}w_i(s(e)) = \sum_{i: \sigma_i(x) = y, \ x\in X_i}w_i(x)
$$
\end{defi}

\begin{proposition} \label{prop:weight-cont-bound}
Let $(\sigma,w)$ be a WPS over $X$. Then $w: Gr(\sigma) \rightarrow (0,\infty)$ is discontinuous at $e\in Gr(\sigma)$ if and only if $e$ is a branching edge for $Gr(\sigma)$. Moreover, $w: Gr(\sigma) \rightarrow (0,\infty)$ is bounded from above and from below.
\end{proposition}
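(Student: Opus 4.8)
The plan is to analyse $Gr(\sigma)$ through its closed pieces $G_i := \{(\sigma_i(x),x) : x\in X_i\}$ for $1\le i\le d$. Since $X_i$ is clopen in the compact space $X$ it is compact, so $x\mapsto(\sigma_i(x),x)$ is a continuous bijection of $X_i$ onto $G_i$, hence a homeomorphism; in particular each $G_i$ is closed in $Gr(\sigma)$ and $e\mapsto w_i(s(e))$ is continuous on $G_i$. The basic bookkeeping fact is that $e\in G_i$ precisely when $i\in I(e,\sigma)$, so $w(e)=\sum_{i\in I(e,\sigma)}w_i(s(e))$. Now fix $e\in Gr(\sigma)$. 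The finite union $\bigcup_{i\notin I(e,\sigma)}G_i$ is closed and avoids $e$, so there is an open neighbourhood $V$ of $e$ in $Gr(\sigma)$ disjoint from every $G_i$ with $i\notin I(e,\sigma)$; shrinking $V$ further (using that each $X_i$ with $i\in I(e,\sigma)$ is open and contains $s(e)$) we may also assume $s(e')\in X_i$ for all $e'\in V$ and all $i\in I(e,\sigma)$. Consequently $I(e',\sigma)\subseteq I(e,\sigma)$ for every $e'\in V$: near $e$ the index set can only shrink. Boundedness is then immediate --- for each $i$ with $X_i\neq\emptyset$ the continuous function $w_i$ attains a strictly positive minimum $m_i$ and a finite maximum $M_i$ on the compact set $X_i$, and since $I(e,\sigma)$ is a nonempty subset of $\{i : X_i\neq\emptyset\}$ we get $\min_i m_i\le w(e)\le\sum_i M_i$ for every $e\in Gr(\sigma)$, the extrema ranging over the finitely many $i$ with $X_i\neq\emptyset$ (the claim being vacuous when $Gr(\sigma)=\emptyset$).

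For the equivalence I would argue the two implications separately. Suppose first that $e$ is \emph{not} a branching edge; I claim $w$ is continuous at $e$. Let $e_\lambda\to e$ in $Gr(\sigma)$; eventually $e_\lambda\in V$, so $I(e_\lambda,\sigma)\subseteq I(e,\sigma)$ and
\[
w(e)-w(e_\lambda)=\sum_{i\in I(e_\lambda,\sigma)}\bigl(w_i(s(e))-w_i(s(e_\lambda))\bigr)+\sum_{i\in I(e,\sigma)\setminus I(e_\lambda,\sigma)}w_i(s(e)).
\]
The first sum has at most $d$ terms, each tending to $0$ by continuity of the $w_i$ and of $s$, so it suffices to show that the second sum --- the \emph{defect} --- is eventually empty. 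If it were not, then along a subnet there would be a fixed $i\in I(e,\sigma)$ for which (necessarily) $s(e_\lambda)\in X_i$ but $\sigma_i(s(e_\lambda))\neq r(e_\lambda)$; writing $r(e_\lambda)=\sigma_j(s(e_\lambda))$ for a fixed $j$ with $e_\lambda\in G_j$ (so $j\in I(e,\sigma)$), the two nets $\bigl(\sigma_i(s(e_\lambda)),s(e_\lambda)\bigr)$ and $e_\lambda=\bigl(\sigma_j(s(e_\lambda)),s(e_\lambda)\bigr)$ lie in $Gr(\sigma)$, both converge to $e$ (the first because $i\in I(e,\sigma)$), and have equal sources but distinct ranges --- exhibiting $e$ as a branching edge, contrary to assumption. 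Hence the defect vanishes eventually and $w(e_\lambda)\to w(e)$.

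Conversely, suppose $e$ is a branching edge, with nets $\{e_\lambda\},\{f_\lambda\}$ in $Gr(\sigma)$ converging to $e$ such that $s(e_\lambda)=s(f_\lambda)=:x_\lambda$ and $r(e_\lambda)\neq r(f_\lambda)$. Passing to a subnet we may write $r(e_\lambda)=\sigma_i(x_\lambda)$ and $r(f_\lambda)=\sigma_j(x_\lambda)$ for fixed $i\neq j$, and since $G_i,G_j$ are closed this forces $i,j\in I(e,\sigma)$. Because $\sigma_j(x_\lambda)=r(f_\lambda)\neq r(e_\lambda)$ we have $j\notin I(e_\lambda,\sigma)$, so for large $\lambda$ we get $I(e_\lambda,\sigma)\subseteq I(e,\sigma)\setminus\{j\}$, whence $w(e_\lambda)\le\sum_{k\in I(e,\sigma)\setminus\{j\}}w_k(x_\lambda)\to w(e)-w_j(s(e))<w(e)$, so $w$ is discontinuous at $e$. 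The only genuinely delicate point --- and the main obstacle --- is tracking how $I(e',\sigma)$ changes as $e'$ moves near $e$: it can only shrink, and the content of the proposition is that it shrinks near $e$ exactly when two branches of $\sigma$ collide at $e$; the displayed decomposition of $w(e)-w(e_\lambda)$ is designed precisely to isolate that defect. Everything else is routine compactness and continuity.
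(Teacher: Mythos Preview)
Your proof is correct and follows essentially the same approach as the paper's: both hinge on the fact that $I(e',\sigma)\subseteq I(e,\sigma)$ for $e'$ near $e$, and that proper containment along a net converging to $e$ is equivalent to $e$ being a branching edge. The paper is terser --- it simply asserts that when $e$ is not branching there is a neighbourhood on which $I(f,\sigma)=I(e,\sigma)$, and for the converse passes directly to a subnet with constant $I(e_\lambda,\sigma)\subsetneq I(e,\sigma)$ --- whereas you supply the supporting details via the closed pieces $G_i$ and the explicit defect decomposition of $w(e)-w(e_\lambda)$.
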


\begin{proof}
$\Rightarrow$: If $e$ is not a branching edge for $\sigma$. Then there exist a neighborhood $U$ of $e$ inside $Gr(\sigma)$ such that for any $f\in U$ we have $I(f,\sigma) = I(e,\sigma)$. Hence, for $f\in U$ we have
$$
w(f) = \sum_{i \in I(f,\sigma)}w_j(s(f)) = \sum_{i \in I(e,\sigma)}w_j(s(f))
$$
so we see that $w$ is continuous at $e$ by continuity of $w_j$.

$\Leftarrow$: If $e \in Gr(\sigma)$ is a branching edge, without loss of generality, and perhaps by taking a subnet, there is a net $e_{\lambda} \rightarrow_{\lambda} e$ indexed by $\Lambda$ with $I := I(e_{\lambda_1},\sigma) = I(e_{\lambda_2},\sigma) \subsetneq I(e,\sigma)$ for all $\lambda_1, \lambda_2 \in \Lambda$. Hence we obtain that
$$
w(e_{\lambda}) = \sum_{i \in I}w_i(s(e_{\lambda})) \rightarrow \sum_{i \in I}w_i(s(e)) 
$$
by continuity of $w_i$ for all $1 \leq i \leq d$. Yet on the other hand,
$$
w(e) = \sum_{i \in I(e,\sigma)}w_i(s(e)) > \sum_{i \in I}w_i(s(e)) = \lim_{\lambda} w(e_{\lambda})
$$
since $I$ is a proper subset of $I(e,\sigma)$, and $w_i$ are bounded from below on the clopen sets $X_i$.

Finally, since for every $1 \leq i \leq d$ we have that $w_i$, being continuous on $X_i$ is bounded above by $M_i$ and below by $C_i$. Let $M = \max\{ M_1,...,M_d\}$ and $C = \min\{C_1,...,C_d\}$. Thus, if $e\in Gr(\sigma)$, there is some $i\in \{1,...,d\}$ with $\sigma(s(e)) = r(e)$ so that $w(e) \geq w_i(s(e)) \geq C > 0$, and of course $w(e) \leq |I(e,\sigma)| \cdot M \leq d \cdot M $, and we see that $w: Gr(\sigma) \rightarrow (0,\infty)$ is bounded below by $C$ and above by $d\cdot M$.
\end{proof}

\begin{defi}
Let $(\sigma,w)$ and $(\tau,u)$ be WPS on compact spaces $X$ and $Y$ respectively. We say that $(\sigma,w)$ and $(\tau,u)$ are \emph{branch-transition conjugate} if $\sigma$ and $\tau$ are graph conjugate via some homeomorphism $\gamma: X \rightarrow Y$ and we have that the weighted transition function $\frac{u^{\gamma}}{w}: Gr(\sigma) \rightarrow (0,\infty)$ from $w$ to $u^{\gamma}$ given by 
$$
\frac{u^{\gamma}}{w}(e) := \frac{u^{\gamma}(e)}{w(e)}
$$
is continuous at $e$ for any branching edge $e \in Gr(\sigma) = Gr(\tau^{\gamma})$.
\end{defi}

We interpret the above to mean that the discontinuities for $w$ and $u^{\gamma}$, which can only be at branching edges, are of the same proportions, so that the weighted transition function becomes continuous at every branched edge, and hence everywhere on $Gr(\sigma)$.

\begin{example} \label{example:cpc-distinct-btc}
Graph conjugacy does not imply branch-transition conjugacy, not even when the weights $w$ and $u$ are constant. If we take $X=[0,1]$ and $\sigma_1(x) = x$, $\sigma_2(x) = 0$, and pick two pairs of constant weights $u=(\frac{1}{2},\frac{1}{2})$ and $w = (\frac{1}{3},\frac{2}{3})$, then $(\sigma,w)$ and $(\sigma,u)$ are not branch-transition conjugate. Indeed, if $\sigma$ is graph conjugate to itself via $\gamma$, as $\gamma$ sends branching points to themselves, and $0$ is the only branching point, we then must have $\gamma(0) = 0$ which means that $\gamma$ must be non-decreasing. This means that
$$
\frac{u^{\gamma}}{w}(y,x) = \begin{cases} 
\frac{3}{2} & \text{if} \ x > 0 \ \& \ y=x \\ 
\frac{3}{4} & \text{if} \ x > 0 \ \& \ y=0 \\
1 & \text{if} \ x =0
\end{cases} 
$$ 
so that $\frac{u^{\gamma}}{w}$ is not continuous at the branching edge $e=(0,0)$, and so $(\sigma,w)$ and $(\sigma,u)$ are not branch-transition conjugate.
\end{example}

\begin{corollary} \label{corollary:radon-nikodym-continuity}
Let $(\sigma,w)$ and $(\tau,u)$ be $d$-variable and $s$-variable WPS on $X$ and $Y$ respectively. If $\sigma$ and $\tau$ are graph conjugate, there exists some $K \geq 1$ such that 
$$
\frac{1}{K} \leq \frac{u^{\gamma}}{w} \leq K
$$
If in addition, $(\sigma,w)$ and $(\tau,u)$ are branch-transition conjugate, then $\frac{u^{\gamma}}{w}$ is continuous on $Gr(\sigma) = Gr(\tau^{\gamma})$.
\end{corollary}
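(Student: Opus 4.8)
The plan is to reduce both assertions to Proposition \ref{prop:weight-cont-bound} and Remark \ref{rem:graph-conj-preserves-branching-edges}, after observing that the induced weight $u^\gamma$ on $Gr(\tau^\gamma) = Gr(\sigma)$ is exactly the pullback of the induced weight $u$ on $Gr(\tau)$ along the homeomorphism $\gamma \times \gamma$. So the first thing I would do is unwind the definitions to check that $u^\gamma = u \circ (\gamma \times \gamma)$ as functions on $Gr(\sigma)$: fixing $e = (y,x) \in Gr(\sigma) = Gr(\tau^\gamma)$, the index set in the definition of $u^\gamma(e)$ is $\{\, i : \gamma^{-1}\tau_i\gamma(x) = y,\ \gamma(x) \in Y_i \,\}$ with summand $u_i(\gamma(x))$, and since $\gamma^{-1}\tau_i\gamma(x) = y$ precisely when $\tau_i(\gamma(x)) = \gamma(y)$, this matches the index set and summands defining $u((\gamma(y),\gamma(x))) = u((\gamma \times \gamma)(e))$. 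I would also record that $\gamma \times \gamma : Gr(\sigma) \to Gr(\tau)$ intertwines $r$ and $s$, hence carries branching edges of $Gr(\sigma)$ onto branching edges of $Gr(\tau)$; so, by Proposition \ref{prop:weight-cont-bound} applied to $(\tau,u)$ together with Remark \ref{rem:graph-conj-preserves-branching-edges}, the function $u^\gamma$ on $Gr(\sigma)$ is continuous at every non-branching edge and its discontinuity set lies inside the set of branching edges of $Gr(\sigma)$.

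For the first (boundedness) assertion I would apply Proposition \ref{prop:weight-cont-bound} to $(\sigma,w)$ and to $(\tau,u)$, obtaining constants $0 < c \le w \le M$ on $Gr(\sigma)$ and $0 < c' \le u \le M'$ on $Gr(\tau)$; since $\gamma \times \gamma$ is a bijection, also $0 < c' \le u^\gamma \le M'$ on $Gr(\sigma)$. Then $\frac{c'}{M} \le \frac{u^\gamma}{w} \le \frac{M'}{c}$, so $K := \max\{1,\ M'/c,\ M/c'\}$ satisfies $\frac{1}{K} \le \frac{u^\gamma}{w} \le K$ on $Gr(\sigma) = Gr(\tau^\gamma)$.

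For the second assertion, assume in addition that $(\sigma,w)$ and $(\tau,u)$ are branch-transition conjugate. At any edge $e$ that is not a branching edge, both $w$ and $u^\gamma$ are continuous at $e$ and $w(e) > 0$, so $\frac{u^\gamma}{w}$ is continuous at $e$; at a branching edge $e$, continuity of $\frac{u^\gamma}{w}$ at $e$ is exactly the defining property of branch-transition conjugacy. Hence $\frac{u^\gamma}{w}$ is continuous on all of $Gr(\sigma) = Gr(\tau^\gamma)$. The only step that needs any care is the identification $u^\gamma = u \circ (\gamma \times \gamma)$ — in particular keeping the clopen domains $X_i$ and $\gamma^{-1}(Y_i)$ straight — and once that is in place the statement falls out of Proposition \ref{prop:weight-cont-bound} and Remark \ref{rem:graph-conj-preserves-branching-edges} with no real obstacle.
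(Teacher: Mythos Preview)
Your proof is correct and follows essentially the same approach as the paper: both reduce the two assertions to Proposition~\ref{prop:weight-cont-bound}, using its boundedness clause for the first part and its continuity-away-from-branching-edges clause (together with the definition of branch-transition conjugacy) for the second. The paper simply assumes $\gamma = Id_X$ at the outset, whereas you work out the identification $u^\gamma = u \circ (\gamma \times \gamma)$ explicitly; this is a harmless difference in presentation, not in substance.
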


\begin{proof}
Without loss of generality we assume that $\gamma = Id_X$. Assuming $Gr(\sigma) = Gr(\tau)$, since both $w$ and $u$ are bounded from above and below by the last part of Proposition \ref{prop:weight-cont-bound}, we see that there is a $K \geq 1$ such that $\frac{1}{K} \leq \frac{u}{w} \leq K$.

Lastly, by Proposition \ref{prop:weight-cont-bound} again, both $w$ and $u$ are continuous at edges which are not branching points for $Gr(\sigma) = Gr(\tau)$, and by branch-transition conjugacy we see that $\frac{u}{w}$ is continuous on all of $Gr(\sigma)$.
\end{proof}

\subsection{C*-correspondences and tensor iterates}

In this subsection we identify the GNS correspondence and quiver correspondence of the positive operator / quiver of a WPS $(\sigma,w)$ respectively, and give a simple description of the tensor iterates in terms of iterates of the topological quiver, using Remark \ref{remark:tensor-iterate}. 

For the GNS correspondence $GNS(\sigma,w):= GNS(P(\sigma,w))$, for any $f,g,h,k \in C(X)$ the inner product formula and bimodule actions for simple tensors are given respectively by
$$
\la f \otimes g, h \otimes k \ra (x) =\sum_{i : x\in X_i} \overline{g(x)f(\sigma_i(x))} w_i(x) h(\sigma_i(x))k(x)  
\ \ \text{and} \ \ f \cdot (g \otimes h) \cdot k = fg \otimes hk
$$

Next, for the quiver correspondence of $\Qq(\sigma,w)$, which we denote by $C(\sigma,w): = C(\Qq(\sigma,w))$ the notation for weights of edges gives a nice formula for the Radon measures $P(\sigma,w)$ by $P(\sigma,w)_x = \sum_{s(e) =x}w(e)\delta_e$, so that for any $\xi, \eta \in C(Gr(\sigma))$ and $f,g\in C(X)$ we have left and right $C(X)$ actions given by
$$
(f \cdot \xi \cdot g)(e) = f(r(e)) \xi(e) g(s(e))
$$
and inner product 
$$
\la \xi, \eta \ra_{w}(x) = \sum_{s(e) = x} \overline{\xi(e)} w(e) \eta(e) = \sum_{i : x\in X_i} \overline{\xi(\sigma_i(x),x)} w_i(x) \eta(\sigma_i(x),x)
$$
We denote $f\odot g \in C(Gr(\sigma))$ the function given by $(f\odot g)(e) = f(r(e))g(s(e))$, which identifies well with the element $f\otimes g$ in $GNS(\sigma,w)$, as the following proposition demonstrates.

\begin{proposition} \label{proposition:GNS-computation}
Let $(\sigma,w)$ be a $d$-variable WPS on a compact space $X$. Then the map $f\otimes g \mapsto f \odot g$ uniquely extends to a unitary isomorphism between $GNS(\sigma,w)$ and $C(\sigma,w)$. In fact, the supremum norm on $C(Gr(\sigma))$ and the norm induced by the inner product on $C(\sigma,w)$ are equivalent.
\end{proposition}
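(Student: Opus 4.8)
The plan is to realise the asserted map as an $Id_{C(X)}$-unitary in the sense of Definition~\ref{defi:isomorphisms-C-corresp}: first I would check that $f\otimes g\mapsto f\odot g$ preserves inner products and the bimodule structure on the algebraic tensor product, so that it extends to an isometry $U\colon GNS(\sigma,w)\to C(\sigma,w)$; then I would establish surjectivity of $U$ via a Stone--Weierstrass argument combined with the norm equivalence, which is itself the main structural input of the statement.

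For the first step I would verify the identity
$$
\langle f\otimes g, h\otimes k\rangle(x) \;=\; \langle f\odot g, h\odot k\rangle_w(x)
\qquad (x\in X),\ f,g,h,k\in C(X).
$$
Expanding the left side using the formula for $P(\sigma,w)$ gives $\overline{g(x)}\,k(x)\sum_{i:\,x\in X_i} w_i(x)\,\overline{f(\sigma_i(x))}\,h(\sigma_i(x))$, while the right side equals $\sum_{e:\,s(e)=x}\overline{f(r(e))}\,\overline{g(x)}\,w(e)\,h(r(e))\,k(x)$. Because $Gr(\sigma)$ is multiplicity free, the edges with source $x$ are exactly the distinct points $(\sigma_i(x),x)$, and the index sets $I(e,\sigma)$ over these edges partition $\{i:x\in X_i\}$; substituting $w(e)=\sum_{i\in I(e,\sigma)}w_i(x)$ then collapses the right side to the left side. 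By conjugate-bilinearity this holds for all elements of $C(X)\otimes_{alg}C(X)$, so the map is inner-product preserving; it annihilates the degenerate part of the pre-inner product and extends uniquely to an isometry $U$. That $U$ is an $Id_{C(X)}$-bimodule map is immediate from $(fg)\odot(hk)=f\cdot(g\odot h)\cdot k$ on $Gr(\sigma)$.

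Next I would prove the norm equivalence on $C(Gr(\sigma))$. For the upper bound, for $\xi\in C(Gr(\sigma))$ one has $\langle\xi,\xi\rangle_w(x)=\sum_{s(e)=x}|\xi(e)|^2 w(e)\le \|\xi\|_\infty^2\sum_{i:\,x\in X_i}w_i(x)=\|\xi\|_\infty^2\,P(\sigma,w)(1)(x)$, hence $\|\xi\|_w\le \|P(\sigma,w)(1)\|_\infty^{1/2}\,\|\xi\|_\infty$. For the lower bound, fixing an edge $e_0$ and using that $w$ is bounded below by some $C>0$ by Proposition~\ref{prop:weight-cont-bound}, $C\,|\xi(e_0)|^2\le w(e_0)|\xi(e_0)|^2\le \langle\xi,\xi\rangle_w(s(e_0))\le \|\xi\|_w^2$; taking the supremum over $e_0$ gives $\|\xi\|_\infty\le C^{-1/2}\|\xi\|_w$. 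Thus the sup norm and the Hilbert-module norm are equivalent on $C(Gr(\sigma))$, and in particular $(C(Gr(\sigma)),\|\cdot\|_w)$ is already complete, so no completion is needed and $C(\sigma,w)=C(Gr(\sigma))$ as a set.

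Finally, for surjectivity of $U$ I would observe that the image of $C(X)\otimes_{alg}C(X)$ is the linear span of $\{f\odot g\}$, which is a self-adjoint, unital subalgebra of $C(Gr(\sigma))$ since $(f_1\odot g_1)(f_2\odot g_2)=(f_1f_2)\odot(g_1g_2)$, and it separates points of the compact Hausdorff space $Gr(\sigma)\subset X\times X$: two distinct edges differ in their range or in their source, and $f\odot 1$, respectively $1\odot g$, can be chosen to detect this. By Stone--Weierstrass this span is $\|\cdot\|_\infty$-dense, hence $\|\cdot\|_w$-dense, in $C(Gr(\sigma))=C(\sigma,w)$; since the range of $U$ is an isometric image of the complete space $GNS(\sigma,w)$ it is closed, so $U$ is onto and is an $Id_{C(X)}$-unitary. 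I expect no serious obstacle: the only points requiring care are the bookkeeping that converts the sum over all indices $i$ defining $P(\sigma,w)$ into the sum over multiplicity-free edges weighted by $w(e)$, and the appeal to Proposition~\ref{prop:weight-cont-bound} for the two-sided bound on $w$ that powers the norm equivalence.
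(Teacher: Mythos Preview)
Your proposal is correct and follows essentially the same approach as the paper: verify inner-product preservation on simple tensors, establish the norm equivalence between $\|\cdot\|_\infty$ and $\|\cdot\|_w$ on $C(Gr(\sigma))$, and conclude surjectivity from density of the span of $\{f\odot g\}$. The minor differences are that the paper bounds each $w_i$ directly rather than invoking Proposition~\ref{prop:weight-cont-bound}, and the paper leaves the dense-range claim unargued, whereas you spell it out via Stone--Weierstrass; your version is in this respect more complete.
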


\begin{proof}
First of all, we note that the norm $\|\cdot \|_w := \|\la \cdot, \cdot \ra_w^{\frac{1}{2}} \|$ defined on $C(Gr(\sigma))$ is equivalent to the supremum norm on it. Indeed, for $\xi \in C(Gr(\sigma))$ we have
$$
\sup_{i, \ x\in X_i}w_i(x)|\xi(\sigma_i(x),x)|^2 \leq \sup_{x} \sum_{i:x\in X_i} w_i(x)|\xi(\sigma_i(x),x)|^2 \leq d \cdot \sup_{i, \ x\in X_i}w_i(x)|\xi(\sigma_i(x),x)|^2
$$
Since for each $1 \leq i \leq d$ we have that $w_i$ is positive and continuous on $X_i$, and $X_i$ is compact, there exists $C >0$ such that for all $1\leq i \leq d$ and $x\in X_i$ we have $\frac{1}{C} \leq w_i(x) \leq C$, so that
$$
\frac{1}{C} \cdot \|F\|_{Gr(\sigma)}^2 \leq \| F \|_w^2 \leq d C \cdot \|F\|_{Gr(\sigma)}^2
$$
We show on finite sums of simple tensors that the map given by $f\otimes g \mapsto f \odot g$ is an isometric bimodule map with dense range inside $C(Gr(\sigma))$ with the norm $\|\cdot \|_w$. Indeed, let $\sum_{i=1}^{\ell}f_i \otimes g_i$ be a finite sum of simple tensors in $GNS(\sigma,w)$. Since 
$$
\la f_i \otimes g_i , f_j \otimes g_j \ra (x) = \sum_{k : x\in X_k} \overline{g_i(x)f_i(\sigma_k(x))}w_k(x)f_j(\sigma_k(x))g_j(x) = 
$$
$$
\sum_{k : x\in X_k} \overline{(f_i \odot g_i)(\sigma_k(x),x)}w_k(x)(f_j \odot g_j)(\sigma_k(x),x) = \la f_i \odot g_i , f_j \odot g_j \ra(x)
$$
we see that 
$$
\| \sum_{i=1}^{\ell}f_i \otimes g_i \|^2  = \sum_{i,j=1}^{\ell}\la f_i \otimes g_i , f_j \otimes g_j \ra (x) =
$$
$$
\sup_{x\in X} \sum_{i,j=1}^{\ell}\la f_i \odot g_i , f_j \odot g_j \ra (x) = \| \sum_{i=1}^{\ell}f_i \odot g_i \|^2
$$
Hence, $f\otimes g \mapsto f \odot g$ extends uniquely to the desired unitary.
\end{proof}

Our next goal is to compute the internal tensor iterates of $C(\sigma,w)$ for a WPS $(\sigma,w)$. As it turns out, the notation for paths in topological quivers fits this purpose very elegantly.

Recall the collection of paths in $\Qq(\sigma,w)$ of length $n$ given in Definition \ref{defi:paths},
$$
Gr(\sigma^n) := \{ \ \mu = \mu_n...\mu_1 \ | \ r(\mu_k) = s(\mu_{k+1}) \ \ \forall 1\leq k < n \ \}
$$
which can be alternatively identified with the closed set of orbits of length $n+1$ given by elements $(x_{n+1},x_n,...,x_1)$ in $X^{n+1}$ such that for all $1\leq m < n$ there is some $1\leq i \leq d$ such that $\sigma_i(x_m) = x_{m+1}$ and $x_m \in X_i$. 

Next, for functions $\xi, \eta \in C(Gr(\sigma^n))$ and $f,g\in C(X)$, left and right actions of $C(X)$ on $C(Gr(\sigma^n))$ are given by
$$
(f\cdot \xi \cdot g)(\mu) = f(r(\mu)) \xi(\mu) g(s(\mu))
$$
and the inner product by
$$
\la \xi, \eta \ra(x) = \sum_{s(\mu) = x} \overline{\xi(\mu)}w(\mu) \eta(\mu)
$$
where $w(\mu): = w(\mu_n)\cdot ... \cdot w(\mu_1)$ is the extended definition of the weights of edges to weights of paths. The above then yields the C*-correspondence structure associated to the topological quiver $\Qq(\sigma,w)^n$ on the space of $n$-paths as mentioned in the discussion at the beginning of Section 6 of \cite{Muhly-Tomforde}.

\begin{proposition} \label{proposition:iterate-computation}
Let $(\sigma,w)$ be a $d$-variable WPS. Then the map sending simple tensors $\xi_n \otimes ... \otimes \xi_1 \in C(Q(\sigma,w)^n)$ to the function $\xi_n \odot ... \odot \xi_1 : \mu_n...\mu_1 \mapsto \xi_n(\mu_n)\cdot ... \cdot \xi_1(\mu_1)$ in $C(Gr(\sigma^n))$ extends uniquely to an (Id-)unitary isomorphism between $C(\sigma,w)^{\otimes n}$ onto $C(Q(\sigma,w)^n)$ with the above C*-correspondence structure. In fact, the supremum norm on $C(Gr(\sigma^n))$ and the norm induced by the inner product on $C(Q(\sigma,w)^n)$ are equivalent.
\end{proposition}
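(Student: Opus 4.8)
The plan is to deduce the statement from Remark \ref{remark:tensor-iterate}, which already handles the tensor-iterate identification for arbitrary topological quivers, and then to tack on the norm-equivalence assertion by repeating, with $n$ factors, the estimate used in the proof of Proposition \ref{proposition:GNS-computation}.

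First I would recall that by construction $C(\sigma,w) = C(\Qq(\sigma,w))$, so $C(\sigma,w)^{\otimes n} = C(\Qq(\sigma,w))^{\otimes n}$. Next I would identify the iterated topological quiver $\Qq(\sigma,w)^n$ of Section 6 of \cite{Muhly-Tomforde} with the quintuple $\big(X, Gr(\sigma^n), r, s, \lambda^n\big)$, where $Gr(\sigma^n)$ is the compact space of $n$-paths from Definition \ref{defi:paths} (compact, being a closed subset of the compact space $(Gr(\sigma))^n$), where $r,s$ are the extended range and source maps, and where $\lambda^n_x = \sum_{s(\mu)=x} w(\mu)\delta_\mu$ with $w(\mu) = w(\mu_n)\cdots w(\mu_1)$. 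This is exactly the C*-correspondence structure recorded in the displays preceding the statement, and the fact that the inductively defined measures of \cite{Muhly-Tomforde} take this product form is precisely what is noted there at the start of their Section 6. With this identification, $C(\Qq(\sigma,w)^n)$ is just $C(Gr(\sigma^n))$ with the structure above, and Remark \ref{remark:tensor-iterate}, applied to $\Qq = \Qq(\sigma,w)$, says that $\xi_n\otimes\cdots\otimes\xi_1 \mapsto \xi_n\odot\cdots\odot\xi_1$ extends uniquely to an $Id$-unitary isomorphism $C(\sigma,w)^{\otimes n} \to C(Gr(\sigma^n))$. If one wants to be self-contained here rather than just quote the remark, one checks on finite sums of simple tensors that the map is a $C(X)$-bimodule map with isometric behaviour (by the same inner-product computation as in Proposition \ref{proposition:GNS-computation}, now summed over length-$n$ paths) and with dense range, the latter by Stone--Weierstrass since the functions $\mu\mapsto\xi(\mu_k)$, for $\xi\in C(Gr(\sigma))$ and $1\le k\le n$, generate a unital, conjugation-closed, point-separating subalgebra of $C(Gr(\sigma^n))$.

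It then remains to see that the supremum norm $\|\cdot\|_{Gr(\sigma^n)}$ on $C(Gr(\sigma^n))$ is equivalent to the norm $\|\cdot\|_w := \|\la\cdot,\cdot\ra^{1/2}\|$ coming from the inner product of $C(\Qq(\sigma,w)^n)$; since the unitary above is isometric for the inner-product norms, this is the second assertion. As in the proof of Proposition \ref{proposition:GNS-computation}, for $\xi\in C(Gr(\sigma^n))$ and $x\in X$ one has
$$
\sup_{s(\mu)=x} w(\mu)|\xi(\mu)|^2 \ \leq\ \sum_{s(\mu)=x} w(\mu)|\xi(\mu)|^2 \ \leq\ d^n\cdot\sup_{s(\mu)=x} w(\mu)|\xi(\mu)|^2,
$$
since through the multiplicity free graph $Gr(\sigma)$ each vertex emits at most $d$ edges, hence there are at most $d^n$ paths of length $n$ with a fixed source. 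By Proposition \ref{prop:weight-cont-bound} there is a constant $K\geq 1$ with $K^{-1}\le w(e)\le K$ for every edge $e$, so $w(\mu) = \prod_{k=1}^n w(\mu_k)$ satisfies $K^{-n}\le w(\mu)\le K^n$. Taking the supremum over $x$ and combining, one gets
$$
K^{-n}\,\|\xi\|_{Gr(\sigma^n)}^2 \ \leq\ \|\xi\|_w^2 \ \leq\ d^nK^n\,\|\xi\|_{Gr(\sigma^n)}^2,
$$
which is the desired equivalence.

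In the end the proposition is essentially a corollary of Remark \ref{remark:tensor-iterate}; the one place that deserves care is confirming that the inductive measure construction of \cite{Muhly-Tomforde} producing $\Qq(\sigma,w)^n$ really yields, measure included, the explicitly weighted $n$-path quiver on $Gr(\sigma^n)$ with $w(\mu)=\prod_k w(\mu_k)$ — but this is recorded both in \cite{Muhly-Tomforde} and in the discussion immediately before the statement, so it can be taken as known. Everything else is routine bookkeeping: the at-most-$d^n$ count of length-$n$ paths through a multiplicity free graph and the uniform two-sided bound on path weights inherited from Proposition \ref{prop:weight-cont-bound}.
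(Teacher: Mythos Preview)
Your proposal is correct and follows essentially the same approach as the paper: invoke Remark \ref{remark:tensor-iterate} for the unitary identification, then establish norm equivalence via the at-most-$d^n$ path count and the two-sided weight bound $K^{-n}\le w(\mu)\le K^n$ from Proposition \ref{prop:weight-cont-bound}. Your write-up is actually a bit more careful than the paper's (which has some missing absolute-value squares in its displayed inequalities), and your optional Stone--Weierstrass remark for density is a nice addition, but the substance is the same.
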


\begin{proof}
The first part follows from Remark \ref{remark:tensor-iterate}. For the last part of the proposition, we note that for $x\in X$, the number of paths of length $n$ emanating from $x$ is at most $d^n$, and so, for every element $\xi \in C(Gr(\sigma^n))$, and an arbitrary path $\mu = \mu_n ... \mu_1$ of length $n$ emanating from $s(\mu)$ we have,
$$
\xi(\mu)w(\mu) \leq \sum_{s(\nu) = s(\mu)} |\xi(\nu)|^2w(\nu) \leq d^n \sup_{\nu \in Gr(\sigma^n)}\xi(\nu)w(\nu)
$$
By Proposition \ref{prop:weight-cont-bound}, we see that $\frac{1}{K^n} \leq w(\nu) = w(\nu_n)...w(\nu_1) \leq K^n$ for any $\nu = \nu_n ... \nu_1 \in Gr(\sigma^n)$ so that
$$
\frac{1}{K^n} \xi(\mu) \leq \sup_{x\in X} \sum_{s(\nu) = x} |\xi(\nu)|^2w(\nu) \leq d^n K^n \cdot \sup_{\nu \in Gr(\sigma^n)}\xi(\nu)
$$
Since $\mu$ was an arbitrary path, we see that 
$$
\frac{1}{K^n} \sup_{\mu \in Gr(\sigma^n)} \xi(\mu) \leq \sup_{x\in X} \sum_{s(\nu) = x} |\xi(\nu)|^2w(\nu) \leq d^n K^n \cdot \sup_{\nu \in Gr(\sigma^n)}\xi(\nu)
$$
and so the norm induced by the inner product on $C(Gr(\sigma^n))$ and the supremum norm are equivalent.
\end{proof}

\subsection{Weighted-orbit conjugacy} 
We now focus on the second conjugacy relation arising from our operator algebras, which we call weighted-orbit conjugacy. We give an example of two WPS which are not weighted-orbit conjugate, and an example of weight-orbit conjugate WPS which are not branch-transition conjugate. We conclude this subsection by providing a simple criterion for when graph, weighted-orbit and branch-transition conjugacy coincide.

\begin{defi}
Let $(\sigma,w)$ and $(\tau,u)$ be WPS on compact spaces $X$ and $Y$ respectively. We say that $(\sigma,w)$ and $(\tau,u)$ are \emph{weighted-orbit conjugate} with constant $C \geq 1$ if $\sigma$ and $\tau$ are graph conjugate via some homeomorphism $\gamma: X \rightarrow Y$ and there exists $H\in C(Gr(\sigma))$ such that for any $n\in \nn$ and any path $\mu = \mu_n ... \mu_1 \in Gr(\sigma^n)$ we have
$$
\frac{1}{C} \leq \Pi_{k=1}^n \Big[\frac{u^{\gamma}}{w}(\mu_k) H(\mu_k) \Big] \leq C
$$
\end{defi}

Intuitively, this means that multiplying by some continuous function $H$ on $Gr(\sigma)$ makes the gaps introduced by $\frac{u^{\gamma}}{w}$ uniformly bounded on paths of \emph{any} length. Note that when $C=1$, the above implies the continuity of $\frac{u^{\gamma}}{w}$ so that in this case $(\sigma,w)$ and $(\tau,u)$ are branch-transition conjugate.

\begin{example} \label{example:not-weighted-orbit-conj}
It turns out that the weighted multivariable systems of Example \ref{example:cpc-distinct-btc} are not even weighted-orbit conjugate, despite being graph conjugate. Indeed, for every $x \in [0,1]$ one can construct a path of length $n$ comprised of the same edge $e=(x,x)\in Gr(\sigma)$ at every step. In this case, if $H\in C(Gr(\sigma))$ and a (necessarily non-decreasing) homeomorphism $\gamma$ realize weighted-orbit conjugacy with constant $C$, we have that for $e=(x,x)$ such that $x>0$ and $n \in \nn$, 
$$
\frac{1}{C} \leq \Pi_{k=1}^n\Big[\frac{3}{2} \cdot H(e) \Big] \leq C
$$
so this forces $H(e) = \frac{3}{2}$. On the other hand, if $x=0$ we have
$$
\frac{1}{C} \leq \Pi_{k=1}^n H(e) \leq C
$$
which forces $H(e) = 1$, and $H$ cannot be continuous since $H(x,x)$ does not converge to $H(0,0)$ as $(x,x) \rightarrow (0,0)$ in $Gr(\sigma)$.
\end{example}

\begin{example} \label{example:different-invariants}
Weighted-orbit conjugacy does not imply branch-transition conjugacy, not even when the weights $w$ and $u$ are constant $1$. If we take $X=[0,1]$ and $\sigma_1(x) = \chi_{[0,\frac{1}{2}]}(x) + 2(1-x)\chi_{(\frac{1}{2},1]}$, $\sigma_2(x) = 0$, then the multivariable systems $\sigma = (\sigma_1,\sigma_2,\sigma_2)$ and $\tau = (\sigma_1,\sigma_1,\sigma_2)$, considered as WPS $(\sigma,w)$ and $(\tau,u)$ with constant weights $w = u = 1$, are not branch conjugate. Indeed, suppose $\sigma$ is graph conjugate to $\tau$ via $\gamma$. Then $\gamma(1) = 1$ as $1$ is the only branching point for both $\sigma$ and $\sigma'$, and so $\gamma$ must be non-decreasing. Hence, we have that
$$
\frac{u^{\gamma}}{w}(y,x) = \begin{cases} 
2 & \text{if} \ x < 1 \ \& \ y=\sigma_1(x) \\ 
\frac{1}{2} & \text{if} \ x < 1 \ \& \ y=\sigma_2(x) = 0 \\
1 & \text{if} \ x =1
\end{cases} 
$$ 
So we see that $\frac{u^{\gamma}}{w}$ is not continuous at the branching edge $e=(0,1)$, so that $(\sigma,w)$ and $(\sigma,u)$ are not branch transition conjugate. 

However, we show that $(\sigma,w)$ and $(\tau,u)$ are weighted-orbit conjugate via $\gamma = Id_{[0,1]}$. Note first that $Gr(\sigma) = Gr(\tau)$ so that $\sigma$ and $\tau$ are graph conjugate via $Id_X$. Next, we define the following continuous $H \in C(Gr(\sigma))$ by setting
$$
H(y,x) = \begin{cases} 
\frac{1}{2} & \text{if} \ x < \frac{1}{2} \ \& \ y=\sigma_1(x) \\ 
x & \text{if} \ \frac{1}{2} \leq x \leq 1 \ \& \ y=\sigma_1(x) \\ 
2 & \text{if} \ x < \frac{1}{2} \ \& \ y=\sigma_2(x) = 0 \\
-2x + 3 & \text{if} \ \frac{1}{2} \leq x \leq 1 \ \& \ y= \sigma_2(x)= 0 \\
1 & \text{if} \ x =1
\end{cases}
$$
and by our definition of $H$ for $e\in Gr(\sigma)$ with $s(e)\leq \frac{1}{2}$ we have $H(e)\frac{u}{w}(e) = 1$. The important thing to note here is that every path beginning at some $x\in [0,1]$ must be comprised, from the third edge on, by edges $e$ with $r(e),s(e) \in \{0,\frac{1}{2}\}$. Indeed, if $\mu = \mu_n...\mu_1$ is a path of length $|\mu|\geq 3$, suppose that $s(\mu_1) = x$, then $r(\mu_1) \in [0,\frac{1}{2}]$, and this forces $r(\mu_2) \in \{0,\frac{1}{2}\}$. 

Hence, we see that for any $n\in \nn$ and any path $\mu= \mu_n...\mu_1$ we have
$$
\Pi_{k=1}^n \frac{u(\mu_k)H(\mu_k)}{w(\mu_k)} = \Pi_{k=1}^2\frac{u(\mu_k)H(\mu_k)}{w(\mu_k)}
$$
Since both $H$ and $\frac{w}{u}$ have values only in the interval $[\frac{1}{2}, 2]$, we see that 
$$
\Big(\frac{1}{2}\Big)^4 \leq \Pi_{k=1}^n \frac{u(\mu_k)H(\mu_k)}{w(\mu_k)} \leq 2^4
$$ 
and so $(\sigma,w)$ and $(\sigma,u)$ are weighted-orbit conjugate via $Id_{[0,1]}$.
\end{example}

It is easy to see that the conjugacy relations we have defined between two WPS have a natural hierarchy. By definition, if $(\sigma,w)$ and $(\tau,u)$ are two WPS over $X$ and $Y$ respectively, then each condition below implies the one after it:
\begin{enumerate}
\item
$(\sigma,w)$ and $(\tau,u)$ are conjugate.
\item
$(\sigma,w)$ and $(\tau,u)$ are branch-transition conjugate.
\item
$(\sigma,w)$ and $(\tau,u)$ are weighted-orbit conjugate.
\item
$(\sigma,w)$ and $(\tau,u)$ are graph conjugate.
\end{enumerate}
As we have seen, the different conjugacy relations are distinct, but in some subclasses, it is possible to identify some of them. 
\begin{enumerate}
\item
For partially defined continuous functions as in Subsection \ref{subsec:pdcf}, graph conjugacy implies conjugacy.
\item
For non-negative matrices as in Subsection \ref{subsec:nnm}, graph conjugacy implies branch-transition conjugacy.
\end{enumerate}

In general we have the following in the case when there are no branching points, which tells us that information on the weights can only be detected if the WPS have branching points.

\begin{corollary} \label{cor:no-branching-graph}
Let $(\sigma,w)$ and $(\tau,u)$ be WPS over compact $X$ and $Y$ respectively. Suppose either $\sigma$ or $\tau$ have no branching points. Then $\sigma$ and $\tau$ are graph conjugate if and only if $(\sigma,w)$ and $(\tau,u)$ are branch-transition conjugate.
\end{corollary}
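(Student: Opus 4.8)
The plan is to show the non-trivial direction: assuming $\sigma$ (or $\tau$) has no branching points and $\sigma, \tau$ are graph conjugate via a homeomorphism $\gamma : X \to Y$, I want to upgrade $\gamma$ to a branch-transition conjugacy. The reverse implication is immediate, since branch-transition conjugacy includes graph conjugacy by definition. So fix $\gamma$ with $Gr(\sigma) = Gr(\tau^{\gamma})$; by Remark \ref{rem:graph-conj-preserves-branching-edges} the sets of branching edges of $\sigma$ and of $\tau^{\gamma}$ coincide. The key observation is that the extra condition required for branch-transition conjugacy — continuity of $\tfrac{u^{\gamma}}{w}$ at every branching edge $e \in Gr(\sigma)$ — is \emph{vacuous} when there are no branching edges at all.

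The first step is therefore to relate "no branching points'' to "no branching edges.'' By Remark \ref{rem:graph-conj-preserves-branching-edges}, every branching edge has a branching point as its source, and every branching point is the source of some branching edge; hence $\sigma$ has no branching points if and only if $Gr(\sigma)$ has no branching edges. The same holds for $\tau$, and since branching edges are preserved under graph conjugacy, $\sigma$ having no branching points forces $Gr(\sigma) = Gr(\tau^{\gamma})$ to have no branching edges whatsoever (and symmetrically if it is $\tau$ that has no branching points, noting $Gr(\tau)$ has no branching edges, and graph conjugacy transports this to $Gr(\tau^{\gamma}) = Gr(\sigma)$).

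The second step is to check that branch-transition conjugacy then holds trivially. The definition demands that $\tfrac{u^{\gamma}}{w} : Gr(\sigma) \to (0,\infty)$ be continuous at every branching edge $e \in Gr(\sigma) = Gr(\tau^{\gamma})$; but there are none, so the condition is satisfied emptily. (In fact, Proposition \ref{prop:weight-cont-bound} gives more: with no branching edges, both $w$ and $u^{\gamma}$ are continuous on all of $Gr(\sigma)$, hence so is $\tfrac{u^{\gamma}}{w}$, which is moreover bounded above and below by Corollary \ref{corollary:radon-nikodym-continuity}.) Thus $(\sigma,w)$ and $(\tau,u)$ are branch-transition conjugate via $\gamma$.

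I do not expect a genuine obstacle here; the content is entirely in the bookkeeping already done in Remark \ref{rem:graph-conj-preserves-branching-edges} and Proposition \ref{prop:weight-cont-bound}. The one point requiring a moment's care is the asymmetry in the hypothesis ("either $\sigma$ or $\tau$''): one must use that graph conjugacy is symmetric and that branching edges are preserved in \emph{both} directions under $\gamma$, so that "$\tau$ has no branching points'' also yields "$Gr(\sigma)$ has no branching edges.'' Once that is noted, the argument closes immediately.
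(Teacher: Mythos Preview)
Your proposal is correct and matches the paper's own proof in substance: both use Remark~\ref{rem:graph-conj-preserves-branching-edges} to pass the no-branching-points hypothesis across the graph conjugacy, then invoke Proposition~\ref{prop:weight-cont-bound} to conclude that $w$ and $u^{\gamma}$ are continuous on all of $Gr(\sigma)$, so that $\tfrac{u^{\gamma}}{w}$ is continuous and branch-transition conjugacy follows. Your additional observation that the branch-transition condition is vacuously satisfied (since there are no branching edges) is a valid shortcut, but the parenthetical route through Proposition~\ref{prop:weight-cont-bound} is exactly what the paper does.
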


\begin{proof}
If $\sigma$ and $\tau$ are graph conjugate and either $\sigma$ or $\tau$ have no branching points, then both have no branching points by Remark \ref{rem:graph-conj-preserves-branching-edges}. Hence by Proposition \ref{prop:weight-cont-bound} we see that both $w$ and $u$ are continuous, and so for a homeomorphism $\gamma : X \rightarrow Y$ such that $Gr(\sigma) = Gr(\tau^{\gamma})$ we have that $\frac{u^{\gamma}}{w}$ is continuous, and $(\sigma,w)$ is branch-transition conjugate to $(\tau,u)$.
\end{proof}

\subsection{Conjugacy relations in terms of C*-correspondences}

We now characterize when two WPS are branch-transition / weighted-orbit conjugate in terms of unitary isomorphism / similarity of the associated C*-correspondences respectively. 

For $\gamma$ implementing branch-transition / weighted-orbit conjugacy and $\rho$ implementing unitary isomorphism / similarity, we may often assume without loss of generality that $\gamma = Id_X$ and / or that $\rho = Id_{C(X)}$. 

Indeed, if $V: C(\sigma,w)\rightarrow C(\tau,u)$ is a $\rho$-bimodule map, with $\gamma :Y \rightarrow X$ the homeomorphism such that $\rho(f) = f \circ \gamma^{-1}$, we may define a $\rho$-unitary $\widetilde{\rho} : C(\tau^{\gamma},u^{\gamma}) \rightarrow C(\tau,u)$ given by $\widetilde{\rho}(\xi)(y,v) = \xi(\gamma^{-1}(y),\gamma^{-1}(v))$. $\widetilde{\rho}$ then satisfies $\widetilde{\rho^{-1}} = \widetilde{\rho}^{-1}$ where $\widetilde{\rho^{-1}}$ is a $\rho^{-1}$-unitary. Hence, by composing we get an $Id$-bimodule map $\widetilde{\rho}^{-1} \circ V :C(\sigma,w) \rightarrow C(\tau^{\gamma},u^{\gamma})$, and $V$ is a $\rho$ - similarity / unitary if and only if $\widetilde{\rho}^{-1} \circ V$ is an $Id$ - similarity /  unitary respectively.

Further, on the conjugacy side, note that $(\sigma,w)$ and $(\tau,u)$ are graph / weighted-orbit / branch-transition conjugate via $\gamma$ if and only if $(\sigma,w)$ and $(\tau^{\gamma},u^{\gamma})$ are graph / weighted-orbit / branch transition conjugate via $Id_X$ respectively.

\begin{proposition} \label{prop:weighted-orbit-char}
Let $(\sigma,w)$ and $(\tau,u)$ be WPS on compact spaces $X$ and $Y$ respectively. Suppose that $\gamma: X \rightarrow Y$ is a homeomorphism and $\rho: C(X) \rightarrow C(Y)$ is the *-isomorphism given by $\rho(f) =f \circ \gamma^{-1}$.

\begin{enumerate}
\item
If $(\sigma,w)$ and $(\tau,u)$ are weighted-orbit conjugate with $C\geq 1$ via $\gamma$, then there exists a $\rho$-similarity $V: C(\sigma,w) \rightarrow C(\tau,u)$ with 
$$
\sup_n \max \{ \|V^{\otimes n} \|^2, \|(V^{-1})^{\otimes n} \|^2 \} \leq C
$$
\item
If $V: C(\sigma,w) \rightarrow C(\tau,u)$ is a $\rho$-similarity, then $(\sigma,w)$ and $(\tau,u)$ are weighted-orbit conjugate via $\gamma$ and constant
$$
C = \sup_n \max \{ \|V^{\otimes n} \|^2, \|(V^{-1})^{\otimes n} \|^2 \}
$$

\end{enumerate}
\end{proposition}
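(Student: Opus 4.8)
The plan is to prove both implications after the standard reduction described just before the statement: replacing $(\tau,u)$ by $(\tau^{\gamma},u^{\gamma})$ and pre/post-composing with the $\rho$-unitary $\widetilde{\rho}$, we may assume $X=Y$, $\rho=Id_{C(X)}$, $\gamma=Id_X$ and $Gr(\sigma)=Gr(\tau)$ as closed subsets of $X\times X$; then $C(\sigma,w)$ and $C(\tau,u)$ have the same underlying space $C(Gr(\sigma))$ and the same $C(X)$-bimodule actions, differing only in the inner products $\la\cdot,\cdot\ra_w$ and $\la\cdot,\cdot\ra_u$. By Proposition \ref{proposition:iterate-computation}, moreover, $C(\sigma,w)^{\otimes n}$ and $C(\tau,u)^{\otimes n}$ are identified with $C(Gr(\sigma^n))$ carrying norms equivalent to the supremum norm.

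For (1), given weighted-orbit conjugacy with constant $C$ witnessed by $H\in C(Gr(\sigma))$ --- which is forced to be strictly positive by setting $n=1$ in the defining inequality --- I would take $V\colon C(\sigma,w)\to C(\tau,u)$ to be multiplication by $H^{1/2}$. This is a bijective $Id$-bimodule map with inverse multiplication by $H^{-1/2}$, and both are bounded since $H^{\pm1/2}$ are bounded continuous functions and the module norms are equivalent to the supremum norm. Using Proposition \ref{proposition:iterate-computation} one checks on simple tensors that $V^{\otimes n}$ is multiplication by $\mu=\mu_n\cdots\mu_1\mapsto\prod_{k=1}^nH(\mu_k)^{1/2}$, so for $\psi\in C(Gr(\sigma^n))$,
\[
\|V^{\otimes n}\psi\|_u^2=\sup_{x\in X}\sum_{s(\mu)=x}\Big(\prod_{k=1}^nH(\mu_k)\Big)|\psi(\mu)|^2\prod_{k=1}^nu(\mu_k),
\]
and, since the operator norm of a positive diagonal operator between finite-dimensional weighted $\ell^2$-spaces is the largest ratio of weights, one gets $\|V^{\otimes n}\|^2=\sup_{\mu\in Gr(\sigma^n)}\prod_kH(\mu_k)\frac{u}{w}(\mu_k)\le C$ and symmetrically $\|(V^{-1})^{\otimes n}\|^2=\big(\inf_{\mu\in Gr(\sigma^n)}\prod_kH(\mu_k)\frac{u}{w}(\mu_k)\big)^{-1}\le C$. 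Transporting $V$ back through $\widetilde{\rho}$, whose tensor powers are isometric, yields a $\rho$-similarity with the same bound.

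For (2), given a $\rho$-similarity $V$, after the reduction $V$ and $V^{-1}$ are bounded bijective $Id$-bimodule maps. The crucial step is to identify $V$ with multiplication by a nowhere-vanishing $h\in C(Gr(\sigma))$. Here multiplicity-freeness of $Gr(\sigma)$ (Definition \ref{defi:mult-free}) lets us regard $C(\sigma,w)$ as a cyclic module over $C(X\times X)$ acting by pointwise multiplication through the inclusion $Gr(\sigma)\subseteq X\times X$, with cyclic vector the constant function $\mathbf 1$ (surjectivity of the restriction $C(X\times X)\to C(Gr(\sigma))$, or Stone--Weierstrass, since $\{(f\circ r)(g\circ s)\}$ separates points of $Gr(\sigma)$). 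As a bimodule map $V$ is $C(X\times X)$-linear, hence determined by $h:=V(\mathbf 1)\in C(Gr(\tau))$ via $V(F|_{Gr(\sigma)})=(F|_{Gr(\tau)})\,h$ for $F\in C(X\times X)$; surjectivity of $V$ forces $h$ nowhere zero on $Gr(\tau)$, and injectivity then forces every $F$ vanishing on $Gr(\tau)$ to vanish on $Gr(\sigma)$, i.e. $Gr(\sigma)\subseteq Gr(\tau)$, with the reverse inclusion from the same argument applied to $V^{-1}$. Thus $Gr(\sigma)=Gr(\tau)$ --- so $\sigma$ and $\tau$ are graph conjugate via $\gamma$ --- and $V$ is multiplication by the nowhere-vanishing $h\in C(Gr(\sigma))$. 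Setting $H:=|h|^2\in C(Gr(\sigma))$ (positive and continuous) and running the computation of (1) in reverse gives $\|V^{\otimes n}\|^2=\sup_{\mu\in Gr(\sigma^n)}\prod_kH(\mu_k)\frac{u}{w}(\mu_k)$ and $\|(V^{-1})^{\otimes n}\|^2=\big(\inf_{\mu\in Gr(\sigma^n)}\prod_kH(\mu_k)\frac{u}{w}(\mu_k)\big)^{-1}$; hence $C:=\sup_n\max\{\|V^{\otimes n}\|^2,\|(V^{-1})^{\otimes n}\|^2\}$ is finite (precisely because $V$ is a similarity), and these identities say exactly that $\frac1C\le\prod_kH(\mu_k)\frac{u}{w}(\mu_k)\le C$ for all $n$ and all $\mu\in Gr(\sigma^n)$, i.e. $(\sigma,w)$ and $(\tau,u)$ are weighted-orbit conjugate via $\gamma$ with constant $C$.

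The step I expect to be the main obstacle is the structural claim in (2): that a bounded bijective bimodule map between these quiver correspondences must be multiplication by a continuous function, and that in tandem the two graphs coincide. Everything else is bookkeeping with the tensor-iterate identification of Proposition \ref{proposition:iterate-computation} and elementary estimates for weighted $\ell^2$-sums over finite fibres. If the correspondence between $\rho$-isomorphism of $C(\sigma,w)$, $C(\tau,u)$ and graph conjugacy of $\sigma$, $\tau$ has already been established, the half of this step asserting $Gr(\sigma)=Gr(\tau)$ may be quoted rather than reproved.
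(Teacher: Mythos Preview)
Your argument is correct and follows the paper's route: in (1) take $V$ to be multiplication by $\sqrt{H}$ and bound its tensor powers via the path inequalities, and in (2) set $h=V(\mathbf 1)$, identify $V$ as multiplication by $h$, deduce $Gr(\sigma)=Gr(\tau)$, and recover the path inequalities with $H=|h|^2$. The only difference is packaging: where the paper obtains graph conjugacy and the pointwise path bounds by shrinking bump functions inside the norm estimates, you phrase the same steps via $C(X\times X)$-linearity and an ``exact operator norm of a diagonal map on finite fibres'' argument---which, made rigorous, is exactly that bump-function localisation; note also that your opening reduction does \emph{not} by itself yield $Gr(\sigma)=Gr(\tau)$ in part (2), but you correctly establish this within the argument.
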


\begin{proof}
We first show (1). Assume without loss of generality that $\gamma = Id_X$, so that $Gr(\sigma) = Gr(\tau)$. Let $H\in C(Gr(\sigma))$ be such that for any path $\mu = \mu_n...\mu_1$ we have
$$
\frac{1}{C} \leq \Pi_{k=1}^n \frac{u}{w}(\mu_k) H(\mu_k) \leq C
$$
We define $V: C(\sigma,w) \rightarrow C(\tau,u)$ by setting $V(\xi)(e) = \xi(e) \sqrt{H(e)}$. It is easily seen that $V$ is a $C(X)$-bimodule map, and we show that $V$ is an $Id$-isomorphism. Indeed, for $\xi \in C(Gr(\sigma))$ we have
$$
\|V(\xi) \|^2 = \sup_{x\in X} \sum_{s(e) = x}|\xi(e)|^2 u(e) H(e) \leq 
C \cdot \sup_{x\in X}\sum_{s(e) = x}|\xi(e)|^2 w(e) = C \|\xi \|^2
$$
and the symmetric argument shows that $\|\xi \| \leq C \|V(\xi)\|$. Hence $V : C(\sigma,w) \rightarrow C(\tau,u)$ is an $Id$-isomorphism. To show that it is an $Id$-similarity, we repeat the above for the tensor iterates which are identified with $C(Gr(\sigma^n))$ for $n\in \nn$ by Proposition \ref{proposition:iterate-computation}. Indeed, fix $n\in \nn$, and $\xi \in C(Gr(\sigma^n))$. By Proposition \ref{proposition:iterate-computation} and the definition of $V$, we must have that $V^{\otimes n}(\xi)(\mu_n...\mu_1) = \xi(\mu_n...\mu_1)\Pi_{k=1}^n\sqrt{H(\mu_k)}$. Thus, we compute,
$$
\|V^{\otimes n}(\xi) \|^2 = \sup_{x\in X} \sum_{s(\mu_n...\mu_1) = x} |\xi(\mu_n...\mu_1)|^2 \Pi_{k=1}^n H(\mu_k) u(\mu_k) \leq 
$$
$$
C \cdot \sup_{x\in X} \sum_{s(\mu_n...\mu_1) = x} |\xi(\mu_n...\mu_1)|^2 \Pi_{k=1}^n w(\mu_k) = C \|\xi \|^2
$$
So that $V$ is tensor-power bounded by $\sqrt{C}$ and the symmetric argument shows that $V^{-1}$ is also tensor power bounded by $\sqrt{C}$.

We now show (2). Without loss of generality we assume that  $\rho= Id_{C(X)}$ (so that we need $\gamma = Id_X$). Denote by $\zeta = V(1\odot 1) \in C(\tau,u)$. For any $f,g\in C(X)$ we have $f\cdot \zeta \cdot g = V(f\odot g)$ and then
$$
\sup_{x\in X} \sum_{s(e) = x}|f(r(e))|^2 |\zeta(e)|^2 |g(s(e))|^2 u(e) = \|V(f\odot g) \| \leq 
$$
$$
\|V\| \|f\odot g\| = \|V \| \sup_{x\in X} \sum_{s(e) = x}|f(r(e))|^2 |g(s(e))|^2 w(e)
$$
so we see that for $(y,x) \in Gr(\sigma)$, by taking infimum over $f,g :X \rightarrow [0,1]$ with $f(y) = 1$ and $g(x) = 1$ which vanish outside arbitrarily small neighborhoods of $y$ and $x$ respectively, we have that $(y,x) \in Gr(\tau)$, for otherwise the right hand side would vanish while the left hand side would not.
The symmetric argument then shows that $Gr(\sigma) = Gr(\tau)$, and $\sigma$ and $\tau$ are graph conjugate via $Id_X$.

Next, by Proposition \ref{proposition:GNS-computation}, convergence in $C(\sigma,w)$ is equivalent to uniform convergence on $C(Gr(\sigma))$, and since $\zeta(e) \cdot (f \odot g)(e) = (f\cdot \zeta \cdot g)(e) = V(f\odot g)(e)$ for every $e\in Gr(\sigma)$, we then must have that $V(\xi)(e) = \zeta(e) \cdot \xi(e)$ for every $\xi \in C(Gr(\sigma))$ and $e\in Gr(\sigma)$.

Next, since for every $\xi_k \in C(Gr(\sigma)) = C(Gr(\tau))$ for $ 1 \leq k \leq n$ we have that
$$
\|\xi_n \otimes ... \otimes \xi_1 \|^2 \leq 
\|(V^{-1})^{\otimes n} \|^2 \|V^{\otimes n}(\xi_n \otimes ... \otimes \xi_1) \|^2
$$
and
$$
\|V^{\otimes n}(\xi_n \otimes ... \otimes \xi_1) \|^2 \leq 
\|V^{\otimes n} \|^2 \|\xi_n \otimes ... \otimes \xi_1 \|^2
$$
We obtain that
$$
\sup_{x\in X} \sum_{s(\mu_n ... \mu_1) = x} \Pi_{k=1}^n |\xi_k(\mu_k)|^2 w(\mu_k) \leq 
$$
$$
\|(V^{-1})^{\otimes n} \|^2 \sup_{x\in X} \sum_{s(\mu_n ... \mu_1) = x} \Pi_{k=1}^n |\xi_k(\mu_k)|^2 |\zeta(\mu_k)|^2 u(\mu_k)
$$
and
$$
\sup_{x\in X} \sum_{s(\mu_n ... \mu_1) = x} \Pi_{k=1}^n |\xi_k(\mu_k)|^2 |\zeta(\mu_k)|^2 u(\mu_k) \leq 
$$
$$
\|V^{\otimes n} \|^2 \sup_{x\in X} \sum_{s(\mu_n ... \mu_1) = x} \Pi_{k=1}^n |\xi_k(\mu_k)|^2 w(\mu_k)
$$
If we \emph{fix} a path $\nu= \nu_n...\nu_1$, we can take infimum over functions $\xi_k$ that vanish outside arbitrarily small neighborhoods of $\nu_k$ for each $k$ and are equal to $1$ at $\nu_k$, to get
$$
\Pi_{k=1}^n w(\nu_k) \leq \|(V^{-1})^{\otimes n} \|^2 \Pi_{k=1}^n |\zeta(\nu_k)|^2 u(\nu_k)
$$
and
$$
\Pi_{k=1}^n |\zeta(\nu_k)|^2 u(\nu_k) \leq \|V^{\otimes n} \|^2 \Pi_{k=1}^n w(\nu_k)
$$
so that with 
$$
C = \max \{ \sup_n\|V^{\otimes n} \|^2 , \sup_n \|(V^{-1})^{\otimes n} \|^2 \}
$$
which by our assumptions is finite, we get
$$
\frac{1}{C} \leq  \frac{\Pi_{k=1}^n H(\nu_k) u(\nu_k)}{\Pi_{k=1}^n w(\nu_k)} = \Pi_{k=1}^n \frac{u}{w}(\nu_k) H(\nu_k) \leq C
$$
Where $H = |\zeta|^2 \in C(Gr(\sigma))$, as required.
\end{proof}

As a corollary to the above, we obtain a characterization for branch-transition conjugacy.

\begin{corollary} \label{cor:branch-transition-char}
Let $(\sigma,w)$ and $(\tau,u)$ be WPS on compact spaces $X$ and $Y$ respectively. Suppose that $\gamma: X \rightarrow Y$ is a homeomorphism and $\rho: C(X) \rightarrow C(Y)$ is the *-isomorphism given by $\rho(f) =f \circ \gamma^{-1}$.

\begin{enumerate}
\item
If $(\sigma,w)$ and $(\tau,u)$ are branch-transition conjugate via $\gamma$, then there exists a $\rho$-unitary $U: C(\sigma,w) \rightarrow C(\tau,u)$.
\item
If $U: C(\sigma,w) \rightarrow C(\tau,u)$ is a $\rho$-unitary, then $(\sigma,w)$ and $(\tau,u)$ are branch-transition conjugate via $\gamma$.
\end{enumerate}
\end{corollary}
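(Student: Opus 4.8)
The plan is to deduce both implications from the weighted-orbit characterization of Proposition \ref{prop:weighted-orbit-char}, together with the observation recorded right after the definition of weighted-orbit conjugacy: weighted-orbit conjugacy with constant $C = 1$ is exactly branch-transition conjugacy. As in the discussion preceding Proposition \ref{prop:weighted-orbit-char}, I would first reduce to the case $\gamma = Id_X$, $\rho = Id_{C(X)}$: replace $(\tau,u)$ by $(\tau^{\gamma},u^{\gamma})$ and compose with the canonical $\rho$-unitary $\widetilde{\rho} : C(\tau^{\gamma},u^{\gamma}) \to C(\tau,u)$. This operation leaves the class of $\rho$-unitary maps and the branch-transition relation unchanged (this is precisely what that discussion sets up), so it is harmless; after it, $Gr(\sigma) = Gr(\tau)$ — which is part of the hypothesis in (1) and which will fall out of the argument in (2).

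For (1): branch-transition conjugacy via $Id_X$ means $Gr(\sigma) = Gr(\tau)$ and $\frac{u}{w}$ is continuous at every branching edge, hence by Corollary \ref{corollary:radon-nikodym-continuity} continuous on all of $Gr(\sigma)$ and bounded above and away from $0$. I would then simply put $U(\xi)(e) = \sqrt{w(e)/u(e)}\,\xi(e)$. Since $\sqrt{w/u}$ is continuous and bounded away from $0$ and $\infty$, this is a well-defined linear bijection of $C(Gr(\sigma))$ onto $C(Gr(\tau))$; it is visibly an $Id$-bimodule map, and the computation $\|U\xi\|^2 = \sup_x \sum_{s(e)=x} \frac{w(e)}{u(e)}|\xi(e)|^2 u(e) = \sup_x \sum_{s(e)=x}|\xi(e)|^2 w(e) = \|\xi\|^2$ shows it is isometric. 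Thus $U$ is a surjective isometric $Id$-correspondence map, i.e. an $Id$-unitary, and transporting back through $\widetilde{\rho}$ gives the desired $\rho$-unitary. Equivalently, this is Proposition \ref{prop:weighted-orbit-char}(1) applied with $H = w/u$ and $C = 1$.

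For (2): a $\rho$-unitary $U$ is in particular a bounded $\rho$-correspondence map, so the first half of the proof of Proposition \ref{prop:weighted-orbit-char}(2) applies verbatim and yields $Gr(\sigma) = Gr(\tau)$, graph conjugacy via $\gamma$, and $U(\xi)(e) = \zeta(e)\xi(e)$ for the continuous, non-vanishing function $\zeta = U(1\odot 1) \in C(Gr(\sigma))$. I would then feed the extra information that $U$ is isometric into the length-one instance of the path estimates in that proof: since $\|U\| = \|U^{-1}\| = 1$, for every edge $\nu \in Gr(\sigma)$ one obtains both $w(\nu) \le |\zeta(\nu)|^2 u(\nu)$ and $|\zeta(\nu)|^2 u(\nu) \le w(\nu)$, i.e. $\frac{u^{\gamma}}{w}(\nu) = |\zeta(\nu)|^{-2}$. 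As $\zeta$ is continuous and non-vanishing, $\frac{u^{\gamma}}{w}$ is continuous on $Gr(\sigma)$, which is precisely branch-transition conjugacy via $\gamma$. (Alternatively: the internal tensor powers of a $\rho$-unitary are again $\rho$-unitaries, so the constant $C$ produced in Proposition \ref{prop:weighted-orbit-char}(2) equals $1$, and one invokes the remark that $C = 1$ forces branch-transition conjugacy.)

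The only points needing a line of care are the verification that the reduction through $\widetilde{\rho}$ genuinely preserves both ``being a $\rho$-unitary'' and ``being branch-transition conjugate'' — which is essentially built into the discussion preceding Proposition \ref{prop:weighted-orbit-char} — and, in the alternative route for (2), the fact that $U^{\otimes n}$ is a $\rho$-unitary whenever $U$ is (that tensor powers of a $\rho$-adjointable map with $U^*U = Id$ and $UU^* = Id$ are adjointable with adjoint $(U^*)^{\otimes n}$). Since the direct route for (2) via the pointwise identity $|\zeta|^2 = w/u$ avoids the latter entirely, there is no real obstacle here: the substantive content of the corollary is already carried by Proposition \ref{prop:weighted-orbit-char}, and the proof is mostly a matter of specializing to constant $1$.
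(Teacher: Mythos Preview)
Your proposal is correct and follows essentially the same route as the paper. For (1) both you and the paper take $H = w/u^{\gamma}$, observe via Corollary~\ref{corollary:radon-nikodym-continuity} that it is continuous, and invoke Proposition~\ref{prop:weighted-orbit-char}(1) with $C=1$ to obtain a $\rho$-unitary. For (2) the paper is slightly more direct---it uses the isometric \emph{equality} $\|U(f\odot g)\| = \|f\odot g\|$ and a single infimum argument to get $|\zeta(e)|^2 u(e) = w(e)$ pointwise, rather than obtaining it as the conjunction of the two length-one inequalities from Proposition~\ref{prop:weighted-orbit-char}(2) with $\|U\| = \|U^{-1}\| = 1$---but this is a cosmetic difference and your argument reaches the same identity and the same conclusion.
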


\begin{proof}
To show (1), we use Corollary \ref{corollary:radon-nikodym-continuity} to see that $H = \frac{w}{u^{\gamma}}$ is continuous, and realizes weighted-orbit conjugacy with $C=1$, so that the $\rho$-similarity $U$ arising from Proposition \ref{prop:weighted-orbit-char} satisfies $\|U\| , \|U^{-1}\| \leq 1$ and is hence a $\rho$-unitary.

To show (2), without loss of generality we assume that $\rho= Id_{C(X)}$ (So that we need $\gamma = Id_X$). Denote by $\zeta = V(1\odot 1) \in C(\tau,u)$. For any $f,g\in C(X)$ we have $f\cdot \zeta \cdot g = U(f\odot g)$ and then
$$
\sup_{x\in X} \sum_{s(e) = x}|f(r(e))|^2 |\zeta(e)|^2 |g(s(e))|^2 u(e) = \|U(f\odot g) \| = 
$$
$$
\|f\odot g\| = \sup_{x\in X} \sum_{s(e) = x}|f(r(e))|^2 |g(s(e))|^2 w(e)
$$
so we see that for $e= (y,x) \in Gr(\sigma)$, by taking infimum over $f,g :X \rightarrow [0,1]$ with $f(y) = 1$ and $g(x) = 1$ which vanish outside arbitrarily small neighborhoods of $y$ and $x$ respectively, we obtain $|\zeta(e)|^2u(e) = w(e)$ so that $e\in Gr(\sigma)$ if and only if $e\in Gr(\tau)$ and $\sigma$ and $\tau$ are graph conjugate via $Id$. Moreover, since $\frac{u}{w} = \frac{1}{|\zeta|^2}$ is a continuous function on $C(Gr(\sigma))$, it must be continuous on each branching edge in particular, and hence $(\sigma,w)$ and $(\tau,u)$ are branch-transition conjugate.
\end{proof}

\begin{example} \label{ex:non-adjointable}
As a consequence of Proposition \ref{prop:weighted-orbit-char} and Corollary \ref{cor:branch-transition-char} we see from Example \ref{example:different-invariants} that there are WPS which have similar C*-correspondences that can not be unitarily isomorphic. In particular, by Remark \ref{remark:adjointable-isomorphism-implies-unitary} we see that between the two correspondences arising from the weighted multivariable systems of Example \ref{example:different-invariants}, no $\rho$-isomorphism can be $\rho$-adjointable.
\end{example}

\begin{remark} \label{remark:graph-conj-char}
Using the theory we have developed so far, and the first part of Corollary \ref{corollary:radon-nikodym-continuity}, one can show that for two WPS $(\sigma,w)$ and $(\tau,u)$ over compact spaces $X$ and $Y$ respectively, we have $\sigma$ and $\tau$ graph conjugate via $\gamma$ if and only if $C(\sigma,w)$ and $C(\tau,u)$ are $\rho$-isomorphic. Hence, $\rho$-isomorphism does not detect any information regarding the weights of the WPS, and only detects the graphs of the systems. 
\end{remark}

\section{Tensor algebras} \label{sec:tensor-algebras}

In general, one can associate to every C*-correspondence $E$ over $\Aa$ a non-self-adjoint norm closed operator algebra $\Tt_+(E)$ called the \emph{tensor algebra} of $E$. We commence a discussion of this general theory, adapting and building on Section 5 of \cite{Muhly-Solel-Morita} and ideas from Section 6 of \cite{StochMat}.

\subsection{Construction}
To construct the tensor algebra associated to a C*-correspondence $E$ over $\Aa$, we first construct the Fock direct sum C*-correspondence

\begin{equation*}
\Ff_E := \bigoplus_{n\in \mathbb{N}}E^{\otimes n} 
\end{equation*} 

The $E$-shifts are the operators $S_{\xi} \in \Ll(\Ff_E)$ for $\xi \in E$, uniquely determined by defining them on direct summands via the equation $S_{\xi}(\eta) = \xi \otimes \eta $, for $m \in \mathbb{N}$, $\eta \in E^{\otimes m}$.

\begin{defi}
The tensor algebra $\Tt_+(E)$ is the norm-closed subalgebra of $\Ll(\Ff_E)$ generated by all $E$-shifts and $\Aa$,
$$ 
\Tt_+(E) : = \overline{Alg}(\Aa \cup \{ \ S_{\xi} \ | \ \xi \in E \ \})
$$

The Toeplitz algebra $\Tt(E)$ is the C*-subalgebra of $\Ll(\Ff_E)$ generated by all $E$-shifts and $\Aa$. That is,

$$\Tt(E) = C^*(\Tt_+(E)) = C^*(\Aa \cup \{ \ S_{\xi} \ | \ \xi \in E \ \})$$
\end{defi}

The algebra $\Ll(\Ff_E)$ admits a natural action $\alpha$ of the unit circle $\mathbb{T}\subset \mathbb{C}$, called the gauge action, defined by $\alpha_{\lambda}(T) = W_{\lambda}TW_{\lambda}^*$ for all $\lambda \in \mathbb{T}$ where $W_{\lambda}: \Ff_E \rightarrow \Ff_E$ is the unitary defined by 
$$W_{\lambda}(\oplus_{n\in \mathbb{N}} \xi_n) = \oplus_{n\in \mathbb{N}} \lambda^n \xi_n$$

Since $\alpha_{\lambda}(S_{\xi}) = S_{\lambda \xi}$ and $\alpha_{\lambda}(a) = a$ for $a \in \Aa$ and $\xi \in E$, it follows that both the Toeplitz algebra and tensor algebra are $\alpha$-invariant closed subalgebras, so the circle action can be restricted to a completely isometric circle action on each of them. One then shows that for every $S\in \Tt(E)$, the function $f(\lambda) = \alpha_{\lambda}(S)$ is norm continuous, and this enables the definition of a conditional expectation $\Phi$ given by
$$ \Phi(S) =\int_{\mathbb{T}}\alpha_{\lambda}(S)d \lambda $$
Where $d \lambda$ is the normalized Haar measure on $\mathbb{T}$.

Let $\{ k_n \}_{n=1}^{\infty}$ denote Fejer's kernel function defined for $\lambda \in \mathbb{T}$ by 
$$k_n(\lambda) = \sum_{j=-n}^n\big(1 - \frac{|j|}{n+1}\big) \lambda^j$$
Note that for $S\in \Tt(E)$, the existence of the canonical conditional expectation $\Phi$ permits the definition of Fourier coefficients for an element $S \in \Tt(E)$ by
$$\Phi_n(S) = \int_{\mathbb{T}} \alpha_{\lambda}(S)\lambda^{-n}d \lambda$$
Then define the Cesaro sums,
$$ \sigma_n(S) := \sum_{j = -n}^n\big(1 - \frac{|j|}{n+1}\big) \Phi_j(S) =
\int_{\mathbb{T}} \sum_{j = -n}^n\big(1 - \frac{|j|}{n+1}\big) \alpha_{\lambda}(S)\lambda^{-j}d \lambda
= \int_{\mathbb{T}} \alpha_{\lambda}(S)k_n(\lambda)d \lambda
$$

Every tensor algebra is graded by the spaces 
$$\Tt_+(E)_n  =  
\Phi_n(\Tt_+(E)) =
\overline{\Span} \{\  S_{\xi_1}\cdot ... \cdot S_{\xi_n} \ \ | \ \ \xi_1,...,\xi_n \in E \ \}$$
We denote by $S^{(n)}_{\xi} \in \Ll(\Ff_E)$, for $\xi \in E^{\otimes n}$, the operator determined uniquely by $S^{(n)}_{\xi}(\eta) = \xi\otimes \eta$ for $m\in \nn$ and $\eta \in E^{\otimes m}$.

The following is a folklore result for tensor algebras that relates the above notions. We refer the reader to Proposition 6.2 in \cite{StochMat} for a proof of this result in the case of subproduct systems over W*-algebras, which is easily adapted to our context.

\begin{proposition} \label{proposition:fourier-series}
Let $E$ be a C*-correspondence over $\Aa$. For every $n \in \mathbb{N}$ 
we have that $E^{\otimes n}$ is isometrically isomorphic 
as a Banach $\Aa$-bimodule to $\tensor(E)_n$
via the map determined uniquely by $\xi \mapsto S^{(n)}_{\xi}$.

Therefore, every element $T\in \tensor(E)$ 
has a unique representation as an infinite series
$T = \sum_{n = 0}^{\infty}S^{(n)}_{\xi_n}$
where $\xi_n \in E^{\otimes n}$ satisfies $\Phi_n(T) = S^{(n)}_{\xi_n}$
(called its Fourier series representation for short),  and the series 
converges Cesaro to $T$ in norm: 
if 
$\sigma_N(T) = \sum_{n=0}^{N}\Big(1 - \frac{n}{N+1} \Big)S^{(n)}_{\xi_n}$,
then we have that
$\lim_{N\to \infty} \| \sigma_N(T) - T\| = 0$.
Furthermore, if $T, T' \in \tensor(E)$ have Fourier series representations
$T = \sum_{i=0}^{\infty}S^{(i)}_{\xi_i}$ and $T' = \sum_{i=0}^{\infty}S^{(i)}_{\eta_i}$, then 
$$
TT' = \sum_{n=0}^{\infty}S^{(n)}_{\zeta},
\quad \text{where} \quad
\zeta= \sum^{n}_{k=0}\xi_k \otimes \eta_{n-k}.
$$
\end{proposition}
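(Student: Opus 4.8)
The plan is to follow the template of Proposition~6.2 in \cite{StochMat}, with the gauge action $\alpha$ and Fej\'er's kernels $k_N$ carrying the analysis. I would organize the argument in four stages: the isometric identification $E^{\otimes n}\cong\tensor(E)_n$, the existence and uniqueness of Fourier coefficients, the Cesaro convergence, and the product formula.

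\emph{Step 1: the isometry.} The map $\xi\mapsto S^{(n)}_\xi$ satisfies the bimodule relation $S^{(n)}_{a\cdot\xi\cdot b}=a\cdot S^{(n)}_\xi\cdot b$ by a direct check on simple tensors of $\Ff_E$, and it is contractive since $\|S^{(n)}_\xi\eta\|=\|\xi\otimes\eta\|\le\|\xi\|\,\|\eta\|$ for $\eta\in E^{\otimes m}$. For the reverse estimate I would evaluate $S^{(n)}_\xi$ on the degree-zero summand $E^{\otimes 0}=\Aa\subseteq\Ff_E$ against a contractive approximate unit $\{e_\lambda\}$ of $\Aa$ (in all cases of interest $\Aa=C(X)$ is unital, so one simply uses $1$): $\|S^{(n)}_\xi e_\lambda\|=\|\xi\cdot e_\lambda\|\to\|\xi\|$, whence $\|S^{(n)}_\xi\|=\|\xi\|$. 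Surjectivity onto $\tensor(E)_n=\overline{\Span}\{S_{\xi_1}\cdots S_{\xi_n}\}$ follows from $S_{\xi_1}\cdots S_{\xi_n}=S^{(n)}_{\xi_1\otimes\cdots\otimes\xi_n}$ and density of elementary tensors in $E^{\otimes n}$.

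\emph{Steps 2 and 3: Fourier coefficients and Cesaro convergence.} Since $\alpha_\lambda(S^{(m)}_\eta)=\lambda^m S^{(m)}_\eta$ and $\alpha_\lambda(a)=a$, every generating monomial of $\tensor(E)$ lies in $\bigoplus_{m\ge 0}$ of the homogeneous pieces; by continuity of each $\Phi_j$ this yields $\Phi_j(T)=0$ for $j<0$ and $\Phi_n(T)\in\tensor(E)_n$ for all $T\in\tensor(E)$, so by Step~1 there is a unique $\xi_n\in E^{\otimes n}$ with $\Phi_n(T)=S^{(n)}_{\xi_n}$. Feeding $\Phi_j(T)=0$ for $j<0$ into the integral formula for $\sigma_N$ gives $\sigma_N(T)=\sum_{n=0}^N\big(1-\tfrac{n}{N+1}\big)S^{(n)}_{\xi_n}$. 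To prove $\sigma_N(T)\to T$ I would invoke the norm-continuity of $\lambda\mapsto\alpha_\lambda(T)$ recorded earlier together with the approximate-identity property of $\{k_N\}$: from $\|\sigma_N(T)-T\|\le\int_{\torus}\|\alpha_\lambda(T)-T\|\,k_N(\lambda)\,d\lambda$ one splits the integral over a small arc around $1$, where the integrand is uniformly small, and its complement, where $\int k_N\to 0$. Uniqueness of the representation is then immediate, since $\Phi_m$ applied to any norm-convergent series of homogeneous terms extracts the $m$-th term. I expect this Fej\'er-type estimate to be the main obstacle: it is the one place where uniform continuity of the gauge orbit and the kernel bounds must be combined quantitatively, while the rest is bookkeeping with the grading.

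\emph{Step 4: the product.} From the definition of the shifts on $\Ff_E$ one has $S^{(k)}_\xi S^{(l)}_\eta=S^{(k+l)}_{\xi\otimes\eta}$, hence $\tensor(E)_i\cdot\tensor(E)_j\subseteq\tensor(E)_{i+j}$. Approximating $TT'$ by the products $\sigma_N(T)\sigma_N(T')$ of finite sums and applying the continuous map $\Phi_n$, the coefficients $\big(1-\tfrac{k}{N+1}\big)\big(1-\tfrac{n-k}{N+1}\big)$ tend to $1$ for each fixed $k\le n$, so $\Phi_n(TT')=\sum_{k=0}^n\Phi_k(T)\Phi_{n-k}(T')=\sum_{k=0}^n S^{(n)}_{\xi_k\otimes\eta_{n-k}}=S^{(n)}_{\zeta}$ with $\zeta=\sum_{k=0}^n\xi_k\otimes\eta_{n-k}$; by the uniqueness from Steps~2--3 this is precisely the Fourier series of $TT'$.
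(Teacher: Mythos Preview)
Your proposal is correct and matches the paper's own treatment exactly: the paper does not supply a proof but explicitly refers the reader to Proposition~6.2 of \cite{StochMat} and notes that the argument there is easily adapted, which is precisely the template you follow. The four stages you outline (isometry via evaluation on the degree-zero summand, extraction of Fourier coefficients, Fej\'er-kernel Cesaro convergence, and the convolution product formula) are the standard steps and are all carried out correctly.
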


\subsection{Graded isomorphisms}

We focus now on the analysis of graded isomorphisms, which are isomorphisms $\varphi: \tensor(E) \rightarrow \tensor(F)$ that satisfy $\varphi(\tensor(E)_n) = \tensor(F)_n$ for all $n\in \nn$.

From now on, we restrict our discussion to C*-correspondences over \emph{commutative} C*-algebras. This will allow us to obtain a relationship between isomorphisms of the C*-correspondences, and graded isomorphisms of the tensor algebras.

For a C*-correspondence $E$, let $\Psi_E : E \rightarrow \tensor(E)_1$ be the isometric Banach bimodule isomorphism given by $\Psi_E(\xi) = S_{\xi}^{(1)}$. In the following theorem it is important that we do not require $\rho$-similarities to be adjointable in item (2) of Definition \ref{defi:isomorphisms-C-corresp}.

\begin{theorem} \label{theorem:bounded-graded}
Let $E$ and $F$ be be C*-correspondences over \emph{commutative} C*-algebras $\Aa$ and $\Bb$ respectively. Then,
\begin{enumerate}
\item
If $V: E \rightarrow F$ is a $\rho$-similarity for some *-isomorphism $\rho$ between $\Aa$ and $\Bb$, then there exists a graded completely bounded isomorphism $\Ad_V:\Tt_+(E) \rightarrow \Tt_+(F)$ such that $\Ad_V |_{\Aa} = \rho$ with 
$$
\max\{\|\Ad_V \|_{cb}, \|\Ad_V^{-1} \|_{cb} \} \leq \sup_{n\in \nn}\|V^{\otimes n}\| \cdot \sup_{n\in \nn}\|(V^{-1})^{\otimes n}\|
$$
\item
If $\varphi:\Tt_+(E) \rightarrow \Tt_+(F)$ is a bounded graded isomorphism, then $\rho_{\varphi}: = \varphi |_{\Aa} : \Aa \rightarrow \Bb$ is a *-isomorphism and $V_{\varphi}: E \rightarrow F$ uniquely determined by $S_{V_{\varphi}(\xi)} =\varphi(S_{\xi})$ for $\xi \in E$ yields a $\rho_{\varphi}$-similarity satisfying
$$
\sup_{n\in \nn}\|(V_{\varphi})^{\otimes n} \| \leq \|\varphi \| \ \ and \ \ 
\sup_{n\in \nn}\|(V_{\varphi}^{-1})^{\otimes n} \| \leq \|\varphi^{-1} \|
$$
\end{enumerate}
Moreover, the operations (1) and (2) are inverses of each other in the sense that $\varphi = \Ad_{V_{\varphi}}$ and $V = V_{Ad_V}$, and in particular every bounded graded isomorphism $\varphi$ is completely bounded with $\|\varphi \|_{cb} \leq \|\varphi \| \cdot \|\varphi^{-1} \|$.
\end{theorem}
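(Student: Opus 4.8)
The plan is to identify every $\rho$-similarity $V$ with conjugation by the Fock ampliation $\widehat V=\bigoplus_{n\in\nn}V^{\otimes n}$, and conversely to read off such a $V$ from the Fourier coefficients furnished by Proposition~\ref{proposition:fourier-series}. For item~(1), given a $\rho$-similarity $V:E\to F$, set $C_1=\sup_n\|V^{\otimes n}\|$ and $C_2=\sup_n\|(V^{-1})^{\otimes n}\|$, both finite by hypothesis. First I would show that commutativity of the base algebras $\Aa$ and $\Bb$ forces the operator inequality $\langle W\eta,W\eta\rangle\le\|W\|^2\,\rho(\langle\eta,\eta\rangle)$ for \emph{any} bounded $\rho$-correspondence map $W$ --- something that can fail over general C*-algebras and is exactly why adjointability need not be assumed. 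This is proved by passing to fibres over the base spaces: a bounded module map over a commutative C*-algebra has norm equal to the supremum of the norms of its fibre maps, and fibrewise the displayed inequality is the elementary Cauchy--Schwarz bound for Hilbert-space operators. Applying this to each $V^{\otimes n}$ and summing the resulting inequalities over $n$ shows that $\widehat V:\Ff_E\to\Ff_F$ is bounded with $\|\widehat V\|\le C_1$, invertible with $\widehat V^{-1}=\bigoplus_n(V^{-1})^{\otimes n}$, and $\|\widehat V^{-1}\|\le C_2$.

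Next I would define $\Ad_V(T)=\widehat V\,T\,\widehat V^{-1}$, a priori a bounded operator on $\Ff_F$, and check it lands inside $\tensor(F)$. Since each $V^{\otimes n}$ intertwines the left actions through $\rho$ we get $\Ad_V(a)=\rho(a)\in\Bb$; and using $V^{\otimes(n+1)}=V\otimes V^{\otimes n}$ from the proposition preceding Definition~\ref{defi:isomorphisms-C-corresp} together with $S_\xi(\eta)=\xi\otimes\eta$, a one-line computation gives $\Ad_V(S_\xi)=S_{V(\xi)}$. Thus $\Ad_V$ is a unital algebra homomorphism (conjugation is multiplicative) carrying the generators of $\tensor(E)$ into $\tensor(F)$, and bounded by $\|\widehat V\|\,\|\widehat V^{-1}\|\le C_1C_2$; by density of polynomials in the generators it maps $\tensor(E)$ into $\tensor(F)$, and the identical estimate for matrix ampliations (conjugation by $\widehat V\otimes I_k$, whose norm is still $\|\widehat V\|$) gives the $\|\cdot\|_{cb}$ bound. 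Gradedness is automatic because $\widehat V$ intertwines the gauge unitaries $W_\lambda$, so $\Ad_V$ commutes with every $\Phi_n$; and $\Ad_{V^{-1}}$ is a two-sided inverse.

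For item~(2) let $\varphi:\tensor(E)\to\tensor(F)$ be a bounded graded isomorphism. From $\varphi(\tensor(E)_0)=\tensor(F)_0$ we get $\varphi(\Aa)=\Bb$, so $\rho_\varphi:=\varphi|_\Aa$ is a bounded (unital) algebra isomorphism of commutative C*-algebras; its transpose is a homeomorphism of character spaces, since bounded multiplicative functionals are automatically characters, and hence $\rho_\varphi$ is a $*$-isomorphism. Using the isometric bimodule identifications $\Psi_E^{(n)}:E^{\otimes n}\to\tensor(E)_n$ of Proposition~\ref{proposition:fourier-series}, put $V_\varphi=(\Psi_F)^{-1}\circ\varphi|_{\tensor(E)_1}\circ\Psi_E$; from $a\,S_\xi\,b=S_{a\xi b}$ (balancing of the tensor product over $\Aa$) one checks $V_\varphi$ is a $\rho_\varphi$-correspondence map with $\|V_\varphi\|\le\|\varphi\|$. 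Carrying out the same construction for the (automatically bounded) inverse $\varphi^{-1}$ produces $V_{\varphi^{-1}}$, and the defining relations force $V_{\varphi^{-1}}=V_\varphi^{-1}$, so $V_\varphi$ is bijective. Finally, since $S^{(n)}_{\xi_n\otimes\cdots\otimes\xi_1}=S_{\xi_n}\cdots S_{\xi_1}$, applying $\varphi$ and using multiplicativity gives $\varphi\big(S^{(n)}_\xi\big)=S^{(n)}_{(V_\varphi)^{\otimes n}(\xi)}$ on simple tensors, hence $\varphi\circ\Psi_E^{(n)}=\Psi_F^{(n)}\circ(V_\varphi)^{\otimes n}$ by density; as the $\Psi^{(n)}$ are isometric this yields $\sup_n\|(V_\varphi)^{\otimes n}\|\le\|\varphi\|$ and symmetrically $\sup_n\|(V_\varphi^{-1})^{\otimes n}\|\le\|\varphi^{-1}\|$, so $V_\varphi$ is a $\rho_\varphi$-similarity.

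To finish, $V_{\Ad_V}=V$ is immediate from $\Ad_V(S_\xi)=S_{V(\xi)}$, while $\Ad_{V_\varphi}$ and $\varphi$ agree on $\Aa$ and on each $S_\xi$ by construction, hence everywhere by density, so $\varphi=\Ad_{V_\varphi}$; feeding $V_\varphi$ into item~(1) then gives $\|\varphi\|_{cb}=\|\Ad_{V_\varphi}\|_{cb}\le\sup_n\|(V_\varphi)^{\otimes n}\|\cdot\sup_n\|(V_\varphi^{-1})^{\otimes n}\|\le\|\varphi\|\,\|\varphi^{-1}\|$. I expect the only genuine obstacle to be the boundedness of $\widehat V$ in the non-adjointable case: everything else is bookkeeping with the Fourier calculus of Proposition~\ref{proposition:fourier-series}, but that step is precisely where commutativity of the base algebras is indispensable, and it is what makes the theorem false for general C*-algebras.
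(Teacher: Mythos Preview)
Your proposal is correct and follows essentially the same route as the paper: construct $\Ad_V$ as conjugation by $\widehat V=\bigoplus_n V^{\otimes n}$ on the Fock module, and recover $V_\varphi$ via the isometric identifications $\Psi^{(n)}:E^{\otimes n}\to\tensor(E)_n$ of Proposition~\ref{proposition:fourier-series}. You are in fact more careful than the paper on one point---the paper asserts boundedness of $\widehat V$ without further comment, while you correctly isolate the needed operator inequality $\langle W\eta,W\eta\rangle\le\|W\|^2\rho(\langle\eta,\eta\rangle)$ for non-adjointable $W$ and supply a localization argument exploiting commutativity of the base.
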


\begin{proof}
(1) Suppose $V:E \rightarrow F$ is a $\rho$-similarity. Define a $\rho$-correspondence morphism $W_V$ from $\Ff_E$ to $\Ff_F$ by $W_V = \oplus_{n=0}^{\infty}V^{\otimes n}$ which is well defined since $\sup_{n\in \nn}\|V^{\otimes n}\| < \infty$. Furthermore, we have that $W_V$ is invertible with $W_V^{-1} = W_{V^{-1}}$ (which is also a well-defined $\rho^{-1}$-correspondence morphism since $\sup_{n\in \nn}\|(V^{-1})^{\otimes n}\| < \infty$) and 
$$
\|W_V \|\cdot  \|W_V^{-1} \| \leq \sup_{n\in \nn}\|V^{\otimes n}\| \cdot \sup_{n\in \nn}\|(V^{-1})^{\otimes n}\|
$$
We define $Ad_V: \tensor(E) \rightarrow \tensor(F)$ by setting $Ad_V(T) = W_VTW_V^{-1}$ which then satisfies 
$$\max \{ \|Ad_V\|_{cb}, \|Ad_V^{-1}\|_{cb} \} \leq \sup_{n\in \nn}\|V^{\otimes n}\| \cdot \sup_{n\in \nn}\|(V^{-1})^{\otimes n}\|$$
(2) Now suppose that $\varphi : \tensor(E) \rightarrow \tensor(F)$ is a bounded graded isomorphism. Note that $\rho_{\varphi}$ is a *-isomorphism since $\Aa$ and $\Bb$ are assumed commutative. We define the map $V_{\varphi} : E \rightarrow F$ by $V_{\varphi} = \Psi_F^{-1} \varphi \Psi_E$ which is a $\rho$-correspondence map by virtue of gradedness of $\varphi$ and the fact that $\rho_{\varphi}$ is a *-isomorphism. Then it is easily verified that for all $n\in \nn$ we have $(V_{\varphi})^{\otimes n} = (\Psi_F^{-1})^{\otimes n} \circ \varphi \circ (\Psi_E)^{\otimes n}$ so that $\|(V_{\varphi})^{\otimes n}\| \leq \| \varphi \|$ and $V_{\varphi}$ is tensor-power bounded with $\sup_n \|(V_{\varphi})^{\otimes n}\| \leq \| \varphi \|$. One then easily shows that $V_{\varphi}^{-1} = V_{\varphi^{-1}} = \Psi_E^{-1} \varphi^{-1} \Psi_F$ is also a $\rho^{-1}$-correspondence map which is similarly tensor power-bounded with $\sup_n \|(V_{\varphi}^{-1})^{\otimes n} \| \leq \|\varphi^{-1}\|$, as required.
\end{proof}

We then get as an easy corollary, the corresponding theorem for the isometric case.

\begin{theorem} \label{theorem:isometric-graded}
Let $E$ and $F$ be C*-correspondences over commutative C*-algebras $\Aa$ and $\Bb$ respectively.
\begin{enumerate}
\item
If $U: E \rightarrow F$ is a $\rho$-unitary for some *-isomorphism $\rho$ between $\Aa$ and $\Bb$, then there exists a graded completely isometric isomorphism $\Ad_U: \Tt_+(E) \rightarrow \Tt_+(F)$ with $\Ad_U | _{\Aa} = \rho$.
\item
If $\varphi: \Tt_+(E) \rightarrow \Tt_+(F)$ is a graded isometric isomorphism, then $\rho_{\varphi}$ is a *-isomorphism and there exists a $\rho_{\varphi}$-unitary $U_{\varphi}: E\rightarrow F$ with $\varphi | _{\Aa} = \rho_{\varphi}$.
\end{enumerate}
Moreover, the operations (1) and (2) are inverses of each other in the sense that $\varphi = \Ad_{V_{\varphi}}$ and $V = V_{Ad_V}$, and in particular every isometric graded isomorphism $\varphi$ must be completely isometric.
\end{theorem}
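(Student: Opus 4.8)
The plan is to deduce both halves of the statement directly from Theorem \ref{theorem:bounded-graded}, using the observation that a $\rho$-unitary is precisely a $\rho$-similarity all of whose tensor powers, and those of its inverse, are contractive.

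For item (1), I would start with a $\rho$-unitary $U : E \rightarrow F$ and invoke Remark \ref{remark:adjointable-isomorphism-implies-unitary} to get that $U$ is $\rho$-adjointable with $U^*U = Id_E$ and $UU^* = Id_F$; this exhibits $U$ as a $\rho$-similarity for which $U^{-1} = U^*$ is itself a $\rho^{-1}$-unitary, so that $\|U\| = \|U^{-1}\| = 1$ and hence $\|U^{\otimes n}\| \leq 1$ and $\|(U^{-1})^{\otimes n}\| \leq 1$ for every $n \in \nn$. Feeding this into Theorem \ref{theorem:bounded-graded}(1) yields a graded completely bounded isomorphism $\Ad_U : \Tt_+(E) \rightarrow \Tt_+(F)$ with $\Ad_U|_{\Aa} = \rho$ and $\max\{\|\Ad_U\|_{cb}, \|\Ad_U^{-1}\|_{cb}\} \leq 1$; since $\|x\| = \|\Ad_U^{-1}\Ad_U(x)\| \leq \|\Ad_U(x)\| \leq \|x\|$ at every level of matrix amplification, $\Ad_U$ is then completely isometric.

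For item (2), I would observe that a graded isometric isomorphism $\varphi$ is in particular a bounded graded isomorphism with $\|\varphi\| = \|\varphi^{-1}\| = 1$, apply Theorem \ref{theorem:bounded-graded}(2) to obtain that $\rho_{\varphi} := \varphi|_{\Aa}$ is a $*$-isomorphism and that $V_{\varphi}$ is a $\rho_{\varphi}$-similarity with $\sup_n \|(V_{\varphi})^{\otimes n}\| \leq 1$ and $\sup_n \|(V_{\varphi}^{-1})^{\otimes n}\| \leq 1$, and then specialize to $n = 1$: from $\|V_{\varphi}\| \leq 1$ and $\|V_{\varphi}^{-1}\| \leq 1$ one gets $\|\xi\| = \|V_{\varphi}^{-1}V_{\varphi}\xi\| \leq \|V_{\varphi}\xi\| \leq \|\xi\|$ for $\xi \in E$, so $V_{\varphi}$ is an isometric bijective bounded $\rho_{\varphi}$-bimodule map, hence a $\rho_{\varphi}$-unitary by Definition \ref{defi:isomorphisms-C-corresp}(3); I would then take $U_{\varphi} := V_{\varphi}$, which satisfies $\varphi|_{\Aa} = \rho_{\varphi}$ by construction. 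The ``mutually inverse'' clause and the final complete-isometry assertion come for free: they are inherited from the corresponding part of Theorem \ref{theorem:bounded-graded} under these identifications, and in particular $\varphi = \Ad_{V_{\varphi}} = \Ad_{U_{\varphi}}$ with $U_{\varphi}$ a $\rho_{\varphi}$-unitary, so part (1) forces $\varphi$ to be completely isometric.

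I do not expect a real obstacle here — this is genuinely a corollary of Theorem \ref{theorem:bounded-graded}. The only step that warrants a second look is the passage in item (2) from ``bijective isometric bounded bimodule map'' to ``$\rho$-unitary'': with the definition of $\rho$-unitary adopted in Definition \ref{defi:isomorphisms-C-corresp}(3) this is a tautology, while its agreement with the more familiar inner-product-preserving notion is exactly what Remark \ref{remark:adjointable-isomorphism-implies-unitary} records, so nothing further is needed.
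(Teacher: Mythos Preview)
Your proposal is correct and matches the paper's own approach: the paper states this result as ``an easy corollary'' of Theorem~\ref{theorem:bounded-graded} without giving a separate proof, and the details you supply are exactly the intended specialization. The only cosmetic remark is that in item~(1) you do not really need to pass through $\rho$-adjointability via Remark~\ref{remark:adjointable-isomorphism-implies-unitary}; since a $\rho$-unitary is by definition surjective and isometric, its inverse is automatically an isometric $\rho^{-1}$-correspondence map, which already gives $\|U\| = \|U^{-1}\| = 1$.
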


\subsection{Base-detection and semi-gradedness}
We will now consider two special classes of isomorphisms which will provide a convenient framework for addressing isomorphism problems. When our isomorphisms fit into these classes, we can often use this to extract information more readily from the isomorphism.

\begin{notation}
If $E$ and $F$ are C*-correspondences over C*-algebra $\Aa$ and $\Bb$ respectively and $\varphi : \Tt_+(E) \rightarrow \Tt_+(F)$ is an algebraic isomorphism, we denote by $\rho_{\varphi} : = \Phi_0 \circ \varphi |_{\Aa}$ which is a homomorphism between $\Aa$ and $\Bb$.
\end{notation}

\begin{defi}
Let $E$ and $F$ be C*-correspondences over C*-algebras $\Aa$ and $\Bb$ respectively. We say that an algebraic isomorphism $\varphi :\Tt_+(E) \rightarrow \Tt_+(F)$ is \emph{base-detecting} if $\rho_{\varphi} : \Aa \rightarrow \Bb$ is a *-isomorphism and $\rho_{\varphi}^{-1} = \rho_{\varphi^{-1}}$.
\end{defi}

Base detection is usually the first thing we check for, since it usually implies that the base algebras can be detected from the isomorphism.

We note that for a graded isomorphism $\varphi$ as considered in Theorem \ref{theorem:bounded-graded}, $\rho_{\varphi}$ is automatically an isomorphism, and since they were between commutative C*-algebras, $\rho_{\varphi}$ had to be a *-isomorphisms. This means that graded isomorphisms are always base-detecting.

Isometric isomorphisms are also automatically base detecting. Indeed, let $E$ and $F$ be C*-correspondences over C*-algebras $\Aa$ and $\Bb$ and let $\varphi: \Tt_+(E) \rightarrow \Tt_+(F)$ be an isometric isomorphism. Since $\Tt_+(F) \subset \Tt(F)$, we can regard $\varphi$ as a map into the Toeplitz C*-algebra. Thus, $\varphi | _{\Aa} : \Aa \rightarrow \Tt(F)$ is an isometric homomorphism, and is hence necessarily positive and preserves the involution from $\Aa$ to $\Tt(F)$. Thus, $\varphi(\Aa) = \varphi(\Aa)^* \subset \Tt_+(F)^* \subset \Tt(F)$, and we must have that $\varphi(\Aa) \subset \Tt_+(F) \cap \Tt_+(F)^* = \Bb$. Thus we have in fact that $\varphi(\Aa) \subset \Bb$, and the symmetric argument shows that $\varphi^{-1}(\Bb) \subset \Aa$, and so $\rho_{\varphi^{-1}}$ is the inverse of $\rho_{\varphi}$, and $\varphi$ is base-detecting.

We now try to relax the assumption of gradedness of an isomorphism while trying to maintain base-detection.

The following concept of semi-gradedness appeared in the work of Muhly and Solel in section 5 of \cite{Muhly-Solel-Morita} where they resolve the isometric isomorphism problem for tensor algebras arising from aperiodic C*-correspondences, and was also used in \cite{StochMat} to provide classification for tensor algebras arising from stochastic matrices, in terms of the matrices.

\begin{defi}
Let $E$ and $F$ be C*-correspondences over C*-algebras $\Aa$ and $\Bb$ respectively, and suppose $\varphi: \Tt_+(E) \rightarrow \Tt_+(F)$ is an algebraic isomorphism. We say that $\varphi$ is \emph{semi-graded} if $\varphi(\Ker \Phi_0) = \Ker \Phi_0$.
\end{defi}

We now provide an analogous proof to the one of Proposition 6.15 in \cite{StochMat}, that semi-gradedness implies base-detection.

\begin{proposition} \label{prop:base-detect-semi-graded}
Let $E$ and $F$ be C*-correspondences over commutative C*-algebras $\Aa$ and $\Bb$ respectively, and $\varphi :\Tt_+(E) \rightarrow \Tt_+(F)$ is a semi-graded bounded isomorphism. Then $\varphi$ is automatically base-detecting.
\end{proposition}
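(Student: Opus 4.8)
The plan is to exploit two structural facts: that $\Phi_0$ is itself an algebra homomorphism onto the base algebra whose kernel $\Ker\Phi_0$ is a closed two-sided ideal fixing the base algebra pointwise, and that $\varphi$ carries this ideal onto the corresponding one by hypothesis. From these, the statement should fall out by bookkeeping decompositions against $\Ker\Phi_0$, with the only genuine input being that commutativity forces a bounded algebra isomorphism to be a $*$-isomorphism.

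First I would record that $\Phi_0 : \Tt_+(F) \to \Bb$ is multiplicative: in the Fourier series description of Proposition \ref{proposition:fourier-series} the degree-zero coefficient of a product $TT'$ is $\xi_0 \otimes \eta_0$, which depends only on the degree-zero coefficients of $T$ and $T'$, and $\Phi_0$ fixes $\Tt_+(F)_0 = \Bb$ pointwise; hence $\Ker\Phi_0$ is a closed two-sided ideal and $\Phi_0$ is a homomorphism, and likewise for $\Tt_+(E)$. Consequently, writing $\varphi(a) = \rho_\varphi(a) + k_a$ with $k_a := \varphi(a) - \Phi_0(\varphi(a)) \in \Ker\Phi_0$, for $a,b \in \Aa$ the cross terms $\rho_\varphi(a)k_b$, $k_a\rho_\varphi(b)$, $k_ak_b$ all lie in the ideal $\Ker\Phi_0$, so applying $\Phi_0$ to $\varphi(ab) = \varphi(a)\varphi(b)$ gives $\rho_\varphi(ab) = \rho_\varphi(a)\rho_\varphi(b)$. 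Thus $\rho_\varphi : \Aa \to \Bb$ is an algebra homomorphism (bounded, since $\Phi_0$ is contractive and $\varphi$ bounded), and symmetrically so is $\rho_{\varphi^{-1}} : \Bb \to \Aa$.

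Next I would invoke semi-gradedness. From $\varphi(\Ker\Phi_0) = \Ker\Phi_0$ and bijectivity of $\varphi$ we also get $\varphi^{-1}(\Ker\Phi_0) = \Ker\Phi_0$. Given $b \in \Bb$, write $\varphi^{-1}(b) = a_0 + k$ with $a_0 = \rho_{\varphi^{-1}}(b) \in \Aa$ and $k \in \Ker\Phi_0$; then $b = \varphi(a_0) + \varphi(k)$ with $\varphi(k) \in \Ker\Phi_0$, and applying $\Phi_0$ (which fixes $b$) yields $b = \Phi_0(\varphi(a_0)) = \rho_\varphi(\rho_{\varphi^{-1}}(b))$. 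The symmetric computation gives $\rho_{\varphi^{-1}}(\rho_\varphi(a)) = a$ for $a \in \Aa$. Hence $\rho_\varphi$ is an algebra isomorphism with inverse $\rho_{\varphi^{-1}}$.

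It remains to upgrade $\rho_\varphi$ from a bounded algebra isomorphism to a $*$-isomorphism, and this is the only place commutativity of $\Aa$ and $\Bb$ enters. Writing $\Aa = C(X)$, $\Bb = C(Y)$, composing characters of $\Bb$ with $\rho_\varphi$ produces multiplicative functionals on $\Aa$, which are automatically bounded and nonzero, hence point evaluations; this realizes $\rho_\varphi$ as composition with a continuous bijection $Y \to X$, which is a homeomorphism since $Y$ is compact and $X$ Hausdorff, so $\rho_\varphi$ is a $*$-isomorphism. Combined with $\rho_\varphi^{-1} = \rho_{\varphi^{-1}}$ from the previous step, this is precisely the assertion that $\varphi$ is base-detecting. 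The main (modest) obstacle is keeping the decompositions against $\Ker\Phi_0$ straight; the $*$-preservation step is then a routine Gelfand-theoretic argument.
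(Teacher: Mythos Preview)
Your argument is correct and follows essentially the same route as the paper. The paper compresses your decomposition argument into the single identity $\Phi_0^F \varphi = \Phi_0^F \varphi \Phi_0^E$ (which is just semi-gradedness restated), and then unwinds $\rho_{\varphi^{-1}} \circ \rho_{\varphi} = \Phi_0^E \varphi^{-1} \Phi_0^F \varphi = \Phi_0^E$ exactly as you do; your explicit verification that $\rho_\varphi$ is multiplicative is unnecessary since it is a composition of the homomorphisms $\Phi_0^F$ and $\varphi|_{\Aa}$, and your Gelfand argument simply spells out what the paper asserts in one line (``commutative, hence contractive, hence $*$-preserving'').
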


\begin{proof}
Let $\Phi_0^E$ and $\Phi_0^F$ denote the conditional expectations on $\Tt_+(E)$ and $\Tt_+(F)$ respectively. As $\varphi$ is semi-graded, for any $T\in \Tt_+(E)$ we have,
$$
\Phi_0^F \varphi (T) = \Phi_0^F \varphi \Phi_0^E(T)
$$
Hence, we must have that $\rho_{\phi} = \Phi_0^F \varphi |_{\Aa}$ is surjective. The same argument then works for $\varphi^{-1}$, and we have for every $a\in \Aa$ that
$$
\rho_{\varphi^{-1}} \circ \rho (a) = \Phi_0^E \varphi^{-1} \Phi_0^F \varphi (a) = \Phi_0^E \varphi^{-1}\varphi(a) = a
$$
Thus, we see that $\rho_{\varphi^{-1}} = (\rho_{\varphi})^{-1}$. As $\Aa$ and $\Bb$ are commutative, $\rho_{\varphi}$ and $\rho_{\varphi^{-1}}$ must both be contractive, and hence *-preserving, so that $\varphi$ is base-detecting.
\end{proof}

\begin{defi}
Let $E$ be a C*-correspondence over $\Aa$. The minimal degree of an element $0 \neq T\in \tensor(E)$, denoted $\md(T)$ is the smallest $n\in \nn$ with $\Phi_n(T) \neq 0$.
\end{defi}

We will need the following criterion for semi-gradedness of \emph{bounded} isomorphisms.

\begin{proposition}[Criterion for semi-gradedness] \label{proposition:criterion-semi-graded}
Let $E$ and $F$ be C*-correspondences over $\Aa$ and $\Bb$ respectively, and let $\varphi : \tensor(E) \rightarrow \tensor(F)$ be a \emph{bounded} base-detecting isomorphism. The following are equivalent:
\begin{enumerate}
\item
$\md(\varphi(T)) = \md(T)$ for all $T\in \tensor(E)$
\item
$\varphi$ is semi-graded
\item
$\md(\varphi(S^{(1)}_{\xi})) \geq 1$ for every $\xi \in E$.
\end{enumerate}

\end{proposition}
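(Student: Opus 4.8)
The plan is to establish the cycle $(1) \Rightarrow (2) \Rightarrow (3) \Rightarrow (1)$, with essentially all the work concentrated in $(3)\Rightarrow(1)$. The first two implications are formal: since $\Ker \Phi_0 = \{0\} \cup \{\, T \in \tensor(E) : \md(T) \geq 1 \,\}$, condition $(1)$ yields the equivalence $T \in \Ker\Phi_0 \Leftrightarrow \varphi(T) \in \Ker\Phi_0$ for every $T$, hence $\varphi(\Ker\Phi_0) = \Ker\Phi_0$, which is $(2)$; and since every $S^{(1)}_\xi$ lies in $\tensor(E)_1 \subseteq \Ker\Phi_0$, semi-gradedness forces $\Phi_0(\varphi(S^{(1)}_\xi)) = 0$, i.e. $\md(\varphi(S^{(1)}_\xi)) \geq 1$, which is $(3)$.

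For $(3)\Rightarrow(1)$ I would first prove the inequality $\md(\varphi(T)) \geq \md(T)$ for every nonzero $T \in \tensor(E)$. The multiplication formula of Proposition \ref{proposition:fourier-series} gives $\md(AB) \geq \md(A)+\md(B)$ for all $A,B \in \tensor(F)$, so under hypothesis $(3)$ the element $\varphi(S^{(1)}_{\xi_1}\cdots S^{(1)}_{\xi_n}) = \varphi(S^{(1)}_{\xi_1})\cdots\varphi(S^{(1)}_{\xi_n})$ has minimal degree at least $n$. Since $\tensor(E)_n = \overline{\Span}\{\, S^{(1)}_{\xi_1}\cdots S^{(1)}_{\xi_n} \,\}$, the map $\zeta \mapsto S^{(n)}_\zeta$ is isometric, $\varphi$ is bounded, and each $\Phi_j$ is continuous, the closed set $\bigcap_{j<n}\Ker\Phi_j$ contains $\varphi(S^{(n)}_\zeta)$ for every $\zeta \in E^{\otimes n}$; that is, $\md(\varphi(S^{(n)}_\zeta)) \geq n$. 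Finally, for an arbitrary nonzero $T$ with $\md(T) = m$, one writes the Fourier series of $T$, applies $\varphi$ to the Cesaro sums $\sigma_N(T) \to T$, and uses closedness of $\bigcap_{j<m}\Ker\Phi_j$ together with continuity of $\varphi$ to conclude $\md(\varphi(T)) \geq m$.

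For the reverse inequality I would upgrade $(3)$ to the assertion that $\varphi$ is semi-graded. The multiplication formula also shows that $\Phi_0$ is multiplicative on $\tensor(E)$, so $\Ker\Phi_0$ is the closed two-sided ideal generated by $\Psi_E(E)$ and $\tensor(E)/\Ker\Phi_0 \cong \Aa$ canonically (via $\Phi_0$); hypothesis $(3)$ then gives $\varphi(\Ker\Phi_0) \subseteq \Ker\Phi_0$, since the image ideal is generated by the elements $\varphi(S^{(1)}_\xi) \in \Ker\Phi_0$. Consequently $\varphi$ descends to a homomorphism $\overline{\varphi} : \tensor(E)/\Ker\Phi_0 \to \tensor(F)/\Ker\Phi_0$, which under the canonical identifications is exactly $\rho_\varphi$, hence a $*$-isomorphism because $\varphi$ is base-detecting; injectivity of $\overline{\varphi}$ then forces $\varphi^{-1}(\Ker\Phi_0) \subseteq \Ker\Phi_0$ and therefore $\varphi(\Ker\Phi_0) = \Ker\Phi_0$. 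Now $\varphi^{-1}$ is semi-graded as well, so it satisfies $(3)$ by the implication $(2)\Rightarrow(3)$ already proved, and applying the degree inequality of the previous paragraph to $\varphi^{-1}$ gives $\md(\varphi^{-1}(S)) \geq \md(S)$ for all $S \in \tensor(F)$; taking $S = \varphi(T)$ yields $\md(T) \geq \md(\varphi(T))$, and combined with the first half this gives $(1)$.

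I expect the main obstacle to be the middle step of the last paragraph: verifying that $\Ker\Phi_0$ is a closed two-sided ideal with quotient $\Aa$ and that the induced quotient map is \emph{literally} $\rho_\varphi$, so that the base-detecting hypothesis can be invoked to invert it. The degree inequality of the second paragraph is routine once Proposition \ref{proposition:fourier-series} is in hand, the only care being the standard density-and-continuity passage from products of shifts to $\tensor(E)_n$ and then, via the Cesaro sums, to all of $\tensor(E)$.
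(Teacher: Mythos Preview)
Your proof is correct and follows essentially the same approach as the paper's. The only organizational difference is in the reverse inequality: where the paper directly verifies $\md(\varphi^{-1}(S^{(1)}_\xi)) \geq 1$ by writing $\varphi^{-1}(S^{(1)}_\xi) = a + T$ with $a \in \Aa$, $\md(T)\geq 1$, and computing $\rho_\varphi(a) = \Phi_0(\varphi(a)+\varphi(T)) = \Phi_0(S^{(1)}_\xi) = 0$, you instead pass to the quotient $\tensor(E)/\Ker\Phi_0 \cong \Aa$ and use injectivity of the induced map $\overline{\varphi} = \rho_\varphi$ to upgrade the inclusion $\varphi(\Ker\Phi_0)\subseteq\Ker\Phi_0$ to an equality, and then invoke $(2)\Rightarrow(3)$ for $\varphi^{-1}$. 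Both arguments hinge on the same point, namely the injectivity of $\rho_\varphi$ furnished by base-detection, so the difference is cosmetic.
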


\begin{proof}
It is clear that (1) implies (2) which implies (3). 

We show that (3) implies (1). We first note that for $\eta \in E^{\otimes n}$ we have that $\md(\varphi(S_{\eta}^{(n)})) \geq n$. Indeed, if we take $\eta = \xi_1 \otimes ... \otimes \xi_n$ with $\xi_i \in E$, since $S_{\eta}^{(n)} = S_{\xi_1}^{(1)} \cdot ... \cdot S_{\xi_n}^{(1)}$ we get $\md(S_{\eta}^{(n)}) \geq \md(S_{\xi_1}^{(1)}) + ... + \md(S_{\xi_n}^{(1)}) \geq n$. Next, since the collection of elements $\eta':= \sum_{i=1}^{\ell} \xi^{(i)}_1 \otimes ... \otimes \xi^{(i)}_n$ is dense in $E^{\otimes n}$, and as we saw, $\Phi_m(\varphi(S_{\eta'}^{(n)})) = 0$ for all $m < n$, by continuity of $\varphi$ and $\Phi_m$ we get that $\Phi_m(\varphi(S_{\eta}^{(n)})) = 0$ for \emph{any} $\eta \in E^{\otimes n}$, so that $\md(\varphi(S_{\eta}^{(n)})) \geq n$ for any $\eta \in E^{\otimes n}$.

We now show that $\md(\varphi(T)) \geq \md(T)$ for any $T\in \tensor(E)$. Indeed, let $T\in \tensor(E)$ be an operator with $\md(T) = n \geq 0$. Then we can write $T = \sum_{k=n}^{\infty}\Phi_k(T)$ as a norm converging Cesaro sum, and by boundedness of $\varphi$, we obtain that $\varphi(T) = \sum_{k=n}^{\infty}\varphi(\Phi_k(T))$ converging Cesaro. Since $\Phi_k(T) = S_{\xi_k}^{(k)}$ is of minimal degree at least $k \geq n$, so would be $\varphi (\Phi_k(T))$. Then, by continuity of $\Phi_k$ and $\varphi$, we have that $\varphi(T)$ is of minimal degree at least $n$, and we see that $\md(\varphi(T)) \geq \md(T)$.

To show that $\md(\varphi(T)) = \md(T)$, we will show that $\md(\varphi^{-1}(S_{\xi}^{(1)})) \geq 1$ for $\xi \in F$ and bootstrap the above argument to show that $\md(\varphi^{-1}(T)) \geq \md(T)$, so that together with the above we get $\md(\varphi(T)) = \md(T)$. 

We show $\md(\varphi^{-1}(S_{\xi}^{(1)})) \geq 1$ for $\xi \in F$. Indeed, let $\xi \in F$ and write $\varphi^{-1}(S_{\xi}^{(1)}) = a + T$ with $a \in \Aa$ and $\md(T) \geq 1$. We have already shown that $\md(\varphi(T)) \geq 1$, so that
$$
\rho_{\varphi}(a) = \Phi_0(\varphi(a)) = \Phi_0(\varphi(a) + \varphi(T)) = \Phi_0(S_{\xi}^{(1)}) = 0
$$
Since $\varphi$ is base-detecting, $\rho_{\varphi}$ is a *-isomorphism, and we have that $a=0$. This means that $\md(\varphi^{-1}(S_{\xi}^{(1)})) \geq 1$, and we are done.
\end{proof}

We next prove an analogue of Proposition 6.17 of Section 6 in \cite{StochMat} in the discussion on semi-graded isomorphisms, that yields a reduction of our isomorphism problems.

\begin{proposition} \label{proposition:semi-graded-to-graded}
Let $E$ and $F$ be C*-correspondences over \emph{commutative} C*-algebras $\Aa$ and $\Bb$ respectively, and let $\varphi :\Tt_+(E) \rightarrow \Tt_+(F)$ be a semi-graded \emph{bounded} isomorphism. There is a unique bounded homomorphism $\widetilde{\varphi} : \Tt_+(E) \rightarrow \Tt_+(F)$ satisfying
$$
\widetilde{\varphi}(S^{(1)}_{\xi}) = \Phi_1 (\varphi(S^{(1)}_{\xi})), \ \ \xi \in E
$$
and $\widetilde{\varphi}$ is a graded completely bounded isomorphism such that $\widetilde{\varphi}^{-1} = \widetilde{\varphi^{-1}}$, and $\|\widetilde{\varphi} \|_{cb} \leq \|\varphi \| \cdot \| \varphi^{-1} \|$.
\end{proposition}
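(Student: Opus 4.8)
The plan is to extract from $\varphi$ a $\rho_\varphi$-similarity $V_\varphi : E \to F$ between the underlying correspondences, and then to define $\widetilde\varphi := \Ad_{V_\varphi}$ via Theorem \ref{theorem:bounded-graded}(1); this will automatically be a graded completely bounded isomorphism with $\widetilde\varphi|_{\Aa} = \rho_\varphi$ and $\max\{\|\widetilde\varphi\|_{cb},\|\widetilde\varphi^{-1}\|_{cb}\} \le \|\varphi\|\cdot\|\varphi^{-1}\|$, and the compatibility $\widetilde\varphi^{-1} = \widetilde{\varphi^{-1}}$ will come for free from $\Ad_{V_\varphi^{-1}} = (\Ad_{V_\varphi})^{-1}$.

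First, since $\varphi$ is a semi-graded bounded isomorphism over commutative C*-algebras, Proposition \ref{prop:base-detect-semi-graded} makes it base-detecting, so $\rho_\varphi = \Phi_0\circ\varphi|_{\Aa}$ is a $*$-isomorphism with $\rho_{\varphi^{-1}} = \rho_\varphi^{-1}$, and Proposition \ref{proposition:criterion-semi-graded} upgrades semi-gradedness to the exact identity $\md(\varphi(T)) = \md(T)$ for all $T\in\Tt_+(E)$; the same applies to $\varphi^{-1}$, which is again semi-graded and base-detecting. Using the isometric bimodule isomorphism $\Psi_F : F \to \Tt_+(F)_1$ of Proposition \ref{proposition:fourier-series}, I would set $V_\varphi := \Psi_F^{-1}\circ\Phi_1\circ\varphi\circ\Psi_E$, equivalently $S^{(1)}_{V_\varphi(\xi)} = \Phi_1\bigl(\varphi(S^{(1)}_\xi)\bigr)$, which is bounded with $\|V_\varphi\| \le \|\varphi\|$. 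To see it is a $\rho_\varphi$-bimodule map, write $\varphi(a) = \rho_\varphi(a) + A_a$ and $\varphi(b) = \rho_\varphi(b) + B_b$ with $A_a, B_b \in \Ker\Phi_0$ (hence of minimal degree at least $1$), expand $\varphi(a\,S^{(1)}_\xi\,b) = \varphi(a)\varphi(S^{(1)}_\xi)\varphi(b)$, and apply $\Phi_1$: since $\md(\varphi(S^{(1)}_\xi))\ge 1$ and the Fourier-multiplication formula of Proposition \ref{proposition:fourier-series} gives $\md(ST)\ge\md(S)+\md(T)$, every term involving $A_a$ or $B_b$ has minimal degree at least $2$ and is killed by $\Phi_1$, leaving $\Phi_1(\varphi(a\,S^{(1)}_\xi\,b)) = \rho_\varphi(a)\,\Phi_1(\varphi(S^{(1)}_\xi))\,\rho_\varphi(b)$, i.e.\ $V_\varphi(a\xi b) = \rho_\varphi(a)V_\varphi(\xi)\rho_\varphi(b)$.

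Next I would prove tensor-power boundedness. For a simple tensor $\eta = \xi_1\otimes\cdots\otimes\xi_n$ one has $S^{(n)}_\eta = S^{(1)}_{\xi_1}\cdots S^{(1)}_{\xi_n}$, so $\varphi(S^{(n)}_\eta)$ is a product of $n$ operators of minimal degree $\ge 1$, whose degree-$n$ Fourier coefficient is, by the multiplication formula, $S^{(1)}_{V_\varphi(\xi_1)}\cdots S^{(1)}_{V_\varphi(\xi_n)} = S^{(n)}_{V_\varphi^{\otimes n}(\eta)}$. As both sides are continuous in $\eta$ and linear combinations of simple tensors are dense, the identity $\Phi_n(\varphi(S^{(n)}_\eta)) = S^{(n)}_{V_\varphi^{\otimes n}(\eta)}$ holds for every $\eta\in E^{\otimes n}$, and taking norms (using that $\Phi_n$ is contractive and $\zeta\mapsto S^{(n)}_\zeta$ is isometric) gives $\sup_n\|V_\varphi^{\otimes n}\|\le\|\varphi\|$. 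The symmetric construction applied to $\varphi^{-1}$ gives a $\rho_\varphi^{-1}$-correspondence map $V_{\varphi^{-1}}$ with $\sup_n\|V_{\varphi^{-1}}^{\otimes n}\|\le\|\varphi^{-1}\|$. To see these are mutually inverse, write $\varphi(S^{(1)}_\xi) = S^{(1)}_{V_\varphi(\xi)} + R$ with $\md(R)\ge 2$, apply $\varphi^{-1}$, use $\md$-preservation for $\varphi^{-1}$ to get $\md(\varphi^{-1}(R))\ge 2$, and apply $\Phi_1$ to obtain $S^{(1)}_\xi = \Phi_1(\varphi^{-1}(S^{(1)}_{V_\varphi(\xi)})) = S^{(1)}_{V_{\varphi^{-1}}(V_\varphi(\xi))}$, with the reversed composition handled identically. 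Thus $V_\varphi$ is a $\rho_\varphi$-similarity, Theorem \ref{theorem:bounded-graded}(1) delivers $\widetilde\varphi := \Ad_{V_\varphi}$ with all the asserted properties (including $\widetilde\varphi(S^{(1)}_\xi) = S^{(1)}_{V_\varphi(\xi)} = \Phi_1(\varphi(S^{(1)}_\xi))$ and the norm bound), and $V_\varphi^{-1} = V_{\varphi^{-1}}$ forces $\widetilde\varphi^{-1} = \widetilde{\varphi^{-1}}$. Uniqueness follows because a bounded homomorphism on $\Tt_+(E)$ is determined by its restriction to $\Aa$ and its values on the shifts $S^{(1)}_\xi$; the latter are prescribed, and on $\Aa$ gradedness together with the relation $a\,S^{(1)}_\xi = S^{(1)}_{a\xi}$ and surjectivity of $V_\varphi$ pins the restriction down to $\rho_\varphi$.

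I expect the main obstacle to be the degree bookkeeping underlying the two middle paragraphs: the construction works only because semi-gradedness is promoted, through Proposition \ref{proposition:criterion-semi-graded}, to the sharp statement $\md(\varphi(\cdot)) = \md(\cdot)$ for $\varphi$ and $\varphi^{-1}$ simultaneously, and it is exactly this that makes the Fourier coefficients of $\varphi$ on $E^{\otimes n}$ collapse to $S^{(n)}_{V_\varphi^{\otimes n}(\cdot)}$ and lets one identify $V_{\varphi^{-1}}$ with $V_\varphi^{-1}$. Tracking precisely which cross-terms each $\Phi_n$ annihilates, and justifying the passage from simple tensors to all of $E^{\otimes n}$ by continuity, is where the care is needed.
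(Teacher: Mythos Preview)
Your proposal is correct and follows essentially the same approach as the paper's proof: both extract $V_\varphi$ via $\Psi_F^{-1}\Phi_1\varphi\Psi_E$, use the minimal-degree preservation from Proposition \ref{proposition:criterion-semi-graded} to identify $\Phi_n\circ\varphi$ on $\Tt_+(E)_n$ with $V_\varphi^{\otimes n}$ (the paper packages this as defining maps $V_n$ and proving $V_{n+m}=V_n\otimes V_m$ by induction, you argue directly on simple tensors via the Fourier multiplication formula---same computation), deduce tensor-power boundedness and mutual invertibility of $V_\varphi$ and $V_{\varphi^{-1}}$, and then invoke Theorem \ref{theorem:bounded-graded}(1). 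Your treatment of the $\rho_\varphi$-bimodule property and of uniqueness is slightly more explicit than the paper's, but the substance is identical.
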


\begin{proof}
First note that since $\varphi$ is semi-graded, by Proposition \ref{prop:base-detect-semi-graded} it must be base-detecting. Hence, by Proposition \ref{proposition:criterion-semi-graded}, for any $T\in \Tt_+(E)$ with $\md(T)=n$ we must have $\Phi^F_n\varphi(T) = \Phi^F_n \varphi \Phi_n^E(T)$. It follows that for all $n\in \mathbb{N}$ and any $S\in \Tt_+(E)_n$ we must have
\begin{equation} \label{eq:inverse-relation}
S = \Phi^E_n(S) = \Phi^E_n\varphi^{-1}\varphi(S) = \Phi^E_n\varphi^{-1}\Phi^F_n\varphi (S)
\end{equation}
Set $\rho = \rho_{\varphi} = \Phi^F_0 \varphi |_{\Aa} : \Aa \rightarrow \Bb$, which is a *-isomorphism, and define a $\rho$-bimodule map $V_n :E^{\otimes n} \rightarrow F^{\otimes}$ by setting $V_n(\xi) = (\Psi^F_n)^{-1}\Phi^F_n\varphi \Psi^E_n(\xi)$, where $\Psi_n(\xi) = S^{(n)}_{\xi}$. Note that $V_n$ is clearly well-defined with $\|V_n \| \leq \|\varphi \|$ so that $V_n$ is a $\rho$-correspondence morphism. One similarly defines a $\rho^{-1}$-correspondence morphism $V_n' : F^{\otimes} \rightarrow E^{\otimes n}$ satisfying $\|V_n'\| \leq \|\varphi^{-1}\|$ that satisfies $V_n^{-1} = V_n'$ by equation \ref{eq:inverse-relation}. We now wish to show that $V=V_1$ is a $\rho$-similarity, so we show that $V$ is tensor power bounded by showing that $V_n = V^{\otimes n}$, and a similar argument would then work for $V^{-1}$. We show by induction that $V_n = V^{\otimes n}$. Indeed, suppose $V_k = V^{\otimes k}$ for all $k < n+m$ with $n,m \geq 1$. Let $\xi \in E^{\otimes n}$ and $\eta \in E^{\otimes m}$, then by semi-gradedness and the definition of $V_n$, $V_m$ and $V_{n+m}$, we have the following chain of equalities
$$
S^{(n+m)}_{V_{n+m}(\xi \otimes \eta)} = \Phi^F_{n+m} \varphi( S^{(n+m)}_{\xi \otimes \eta}) = (\Phi^F_n(\varphi(S^{(n)}_{\xi})) \Phi^F_m(\varphi(S^{(m)}_{\eta}))) =
$$
$$
S^{(n)}_{V_n(\xi)}S^{(m)}_{V_m(\eta)} = S^{(n)}_{V^{\otimes n}(\xi)}S^{(m)}_{V^{\otimes m}(\eta)} = S^{(n+m)}_{V^{\otimes (n+m)}(\xi \otimes \eta)}
$$
so that by applying $(\Psi^F_{n+m})^{-1}$ to both sides of this equation we obtain that $V_{n+m}(\xi \otimes \eta) = V^{\otimes (n+m)}(\xi \otimes \eta)$, so that $V_{n+m} = V^{\otimes (n+m)}$.

Thus, we have constructed a $\rho$-similarity $V: E \rightarrow F$ satisfying $S^{(1)}_{V(\xi)} = \Phi^F_1 \varphi (S^{(1)}_{\xi})$ for all $\xi \in E$ with the tensor iterates of $V$ and $V^{-1}$ bounded in norm by the norms of $\varphi$ and $\varphi^{-1}$ respectively. By item (1) of Theorem \ref{theorem:bounded-graded} the $\rho$-similarity $V$ promotes to a graded completely bounded isomorphism $\widetilde{\varphi} = Ad_V : \Tt_+(E) \rightarrow \Tt_+(F)$ uniquely determined by satisfying $S^{(1)}_{V(\xi)} = \widetilde{\varphi}(S^{(1)}_{\xi})$ for all $\xi \in E$, with $\|\widetilde{\varphi}\| \leq \|\varphi \| \| \varphi^{-1} \|$. So we see that $\widetilde{\varphi}(S^{(1)}_{\xi}) = \Phi^F_1\varphi(S^{(1)}_{\xi})$ for all $\xi \in E$ and that $\widetilde{\varphi}$ is uniquely determined by this property as required.
\end{proof}

\begin{corollary} \label{proposition:isometric-semi-graded-to-graded}
Let $E$ and $F$ be C*-correspondences over \emph{commutative} C*-algebras $\Aa$ and $\Bb$ respectively, and let $\varphi :\Tt_+(E) \rightarrow \Tt_+(F)$ be a semi-graded \emph{isometric} isomorphism. There is a unique bounded homomorphism $\widetilde{\varphi} : \Tt_+(E) \rightarrow \Tt_+(F)$ satisfying
$$
\widetilde{\varphi}(S^{(1)}_{\xi}) = \Phi_1 (\varphi(S^{(1)}_{\xi})), \ \ \xi \in E
$$
and $\widetilde{\varphi}$ is a graded completely isometric isomorphism such that $\widetilde{\varphi}^{-1} = \widetilde{\varphi^{-1}}$.
\end{corollary}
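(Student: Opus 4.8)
The plan is to deduce this immediately from Proposition \ref{proposition:semi-graded-to-graded} by feeding in the extra information that $\varphi$ is isometric. Since an isometric isomorphism is in particular a bounded isomorphism with $\|\varphi\| = \|\varphi^{-1}\| = 1$, Proposition \ref{proposition:semi-graded-to-graded} already produces a unique bounded homomorphism $\widetilde{\varphi}$ satisfying $\widetilde{\varphi}(S^{(1)}_{\xi}) = \Phi_1(\varphi(S^{(1)}_{\xi}))$ for $\xi \in E$, which is a graded completely bounded isomorphism with $\widetilde{\varphi}^{-1} = \widetilde{\varphi^{-1}}$ and $\|\widetilde{\varphi}\|_{cb} \leq \|\varphi\| \cdot \|\varphi^{-1}\| = 1$. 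Applying the same proposition to the semi-graded isometric isomorphism $\varphi^{-1}$ gives $\|\widetilde{\varphi}^{-1}\|_{cb} = \|\widetilde{\varphi^{-1}}\|_{cb} \leq \|\varphi^{-1}\| \cdot \|\varphi\| = 1$. A completely bounded isomorphism whose inverse is also completely contractive is completely isometric, so $\widetilde{\varphi}$ is a graded completely isometric isomorphism, and uniqueness is exactly the uniqueness clause of Proposition \ref{proposition:semi-graded-to-graded}.

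Alternatively, and this is the route I would actually write out in detail, I would track the $\rho$-similarity $V = V_1 : E \to F$ constructed in the proof of Proposition \ref{proposition:semi-graded-to-graded}, which satisfies $\sup_n \|V^{\otimes n}\| \leq \|\varphi\| = 1$ and $\sup_n \|(V^{-1})^{\otimes n}\| \leq \|\varphi^{-1}\| = 1$. Taking $n = 1$, the invertible $\rho$-correspondence map $V$ has $\|V\| \leq 1$ and $\|V^{-1}\| \leq 1$, whence for every $\xi \in E$ one has $\|\xi\| = \|V^{-1}V\xi\| \leq \|V\xi\| \leq \|\xi\|$; thus $V$ is a surjective isometry, i.e. a $\rho$-unitary in the sense of Definition \ref{defi:isomorphisms-C-corresp}(3). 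Theorem \ref{theorem:isometric-graded}(1) then upgrades $\Ad_V = \widetilde{\varphi}$ to a graded completely isometric isomorphism with $\Ad_V|_{\Aa} = \rho_{\varphi}$, while $\widetilde{\varphi}^{-1} = \widetilde{\varphi^{-1}}$ is inherited verbatim from Proposition \ref{proposition:semi-graded-to-graded}.

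There is essentially no obstacle here: the statement is a formal consequence of Proposition \ref{proposition:semi-graded-to-graded} together with the fact that the similarity it produces has both its norm and the norm of its inverse controlled by $\|\varphi\|$ and $\|\varphi^{-1}\|$, both of which equal $1$ in the isometric case. The only point requiring a line of care is that one must use \emph{both} norm bounds, $\|V\| \leq 1$ and $\|V^{-1}\| \leq 1$, simultaneously in order to conclude that $V$ is a genuine $\rho$-unitary rather than merely a contractive $\rho$-similarity; either bound alone would not suffice.
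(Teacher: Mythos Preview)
Your proposal is correct and is exactly the argument the paper has in mind: the corollary is stated without proof precisely because it follows immediately from Proposition~\ref{proposition:semi-graded-to-graded} by plugging in $\|\varphi\| = \|\varphi^{-1}\| = 1$, which forces the underlying $\rho$-similarity $V$ to be a $\rho$-unitary (equivalently, forces $\widetilde{\varphi}$ and $\widetilde{\varphi}^{-1} = \widetilde{\varphi^{-1}}$ both to be completely contractive). Either of your two formulations is fine, and both are faithful to the paper's intended one-line deduction.
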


\section{Universal description and automatic continuity} \label{sec:univ-prop-auto-cont}

We start this section by proving a universal property for the Toeplitz algebra $\Tt(\sigma,w):= \Tt(C(\sigma,w))$ arising from a WPS $(\sigma,w)$, as a C*-algebra generated by certain set elements satisfying certain relations. This enables us to think of the non-self-adjoint tensor algebra $\tensor(\sigma,w) := \tensor(C(\sigma,w))$ as a the norm closed operator subalgebra of the universal C*-algebra $\Tt(\sigma,w)$ generated by the same set of elements. Moreover, we provide a criterion for automatic continuity, that will help answer the algebraic isomorphism problem for our tensor algebras, under the assumption that the union of $X_i$ covers $X$, where $X_i$ are the clopen domain of definition for $\sigma_i$.

\subsection{Universal description}
Recall that for a partial system $(\sigma,w)$, the positive operator $P(\sigma,w):C(X)\rightarrow C(X)$ used to construct the GNS C*-correspondence of $(\sigma,w)$ was given by
$$
P(\sigma,w)(f)(x) = \sum_{i : x\in X_i}w_i(x)f(\sigma_i(x))
$$

\begin{defi} \label{defi:representation-of-WPS}
Let $(\sigma,w)$ be a WPS on compact $X$. A \emph{representation} of $(\sigma,w)$ is a pair $(\pi, T)$ with $\pi: C(X) \rightarrow B(H)$ a unital *-representation and an operator $T\in B(H)$ such that
\begin{equation*} \label{eq:univ-prop}
T^*\pi(f)T = \pi \big(P(\sigma,w)(f) \big) \ \text{ for all } \ f\in C(X)
\end{equation*}

Denote by $C^*(\pi,T)$ the C*-algebra and $\overline{Alg}(\pi,T)$ the norm-closed algebra generated by the image of $\pi$ and $T$ inside $B(H)$. 
\end{defi}

The following universal description shows that we can think of $\tensor(\sigma,w)$ as a certain "semi-crossed product" by the positive map $P(\sigma,w)$.

\begin{theorem}[Universal description] \label{theorem:universal-description}
Let $(\sigma,w)$ be a WPS on compact $X$. Then the Toeplitz algebra $\Tt(\sigma,w)$ and the tensor algebra $\tensor(\sigma,w)$ are the universal C*-algebra and operator algebra (respectively) generated by a universal representation $(\pi_u, T_u)$ of $(\sigma,w)$.
\end{theorem}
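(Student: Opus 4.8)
The plan is to deduce this from the universal properties of the Toeplitz algebra and the tensor algebra of a C*-correspondence, by setting up a bijection between representations of the WPS $(\sigma,w)$ in the sense of Definition~\ref{defi:representation-of-WPS} and isometric representations (in the sense of Muhly--Solel, i.e.\ Toeplitz representations) of the C*-correspondence $C(\sigma,w)$.

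First I would record the following dictionary, which is the C*-correspondence counterpart of the fact that the GNS correspondence is universal for its defining positive map (Chapter~5 of \cite{Lance}, Section~3 of \cite{ExelCroPro}). Given a representation $(\pi,T)$ of $(\sigma,w)$, define a linear map $t$ on the dense subspace $\Span\{f\odot g : f,g\in C(X)\}$ of $C(\sigma,w)$ by $t(f\odot g) := \pi(f)\,T\,\pi(g)$. Using the defining relation $T^*\pi(h)T = \pi(P(\sigma,w)(h))$ together with the explicit inner product on $C(\sigma,w)$ (the formulas preceding Proposition~\ref{proposition:GNS-computation}), one computes for any finite sum $\eta=\sum_i f_i\odot g_i$ that $t(\eta)^*t(\eta)=\pi(\la\eta,\eta\ra_w)$; hence $\|t(\eta)\|\le\|\eta\|_w$, so $t$ extends to a well-defined linear map on all of $C(\sigma,w)$, and the identities $t(f\cdot\xi\cdot g)=\pi(f)t(\xi)\pi(g)$ and $t(\xi)^*t(\eta)=\pi(\la\xi,\eta\ra_w)$ hold by continuity; that is, $(\pi,t)$ is an isometric representation of $C(\sigma,w)$. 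Conversely, if $(\pi,t)$ is an isometric representation of $C(\sigma,w)$, then $T:=t(1\odot1)$ satisfies $T^*\pi(f)T = t(1\odot1)^*t(f\odot1)=\pi(\la 1\odot1,f\odot1\ra_w)=\pi(P(\sigma,w)(f))$, so $(\pi,T)$ is a representation of $(\sigma,w)$. Since $1\odot1$ is a cyclic vector of the bimodule $C(\sigma,w)$, these two assignments are mutually inverse, and in particular $C^*(\pi,T)=C^*(\pi,t)$ and $\overline{Alg}(\pi,T)=\overline{Alg}(\pi(C(X))\cup\{t(\xi):\xi\in C(\sigma,w)\})$.

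With the dictionary in place, the theorem follows from the established universal properties. The universal C*-algebra generated by an isometric representation of $C(\sigma,w)$ is the Toeplitz algebra $\Tt(C(\sigma,w))=\Tt(\sigma,w)$, realised faithfully on Fock space (see e.g.\ \cite{Muhly-Solel-Tensor-Rep}); translating via the dictionary, $\Tt(\sigma,w)$ is generated by a universal representation $(\pi_u,T_u)$ of $(\sigma,w)$, where $\pi_u$ is the left action of $C(X)$ on $\Ff_{C(\sigma,w)}$ and $T_u = S_{1\odot1}$. Because $1\odot1$ is cyclic, the closed algebra generated by $\pi_u(C(X))$ and $T_u$ contains $S_{f\odot g}=\pi_u(f)T_u\pi_u(g)$ for all $f,g$, hence all shifts $S_\xi$ (using Proposition~\ref{proposition:GNS-computation}), and therefore equals $\tensor(\sigma,w)$ inside $\Tt(\sigma,w)$ by Proposition~\ref{proposition:fourier-series}. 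Finally, any representation $(\pi,T)$ of $(\sigma,w)$ yields, via its associated isometric representation $(\pi,t)$ of $C(\sigma,w)$, a $*$-homomorphism $\Tt(\sigma,w)\to C^*(\pi,T)$ carrying $\pi_u\mapsto\pi$ and $T_u\mapsto T$; its restriction to $\tensor(\sigma,w)$ is a completely contractive homomorphism onto $\overline{Alg}(\pi,T)$. This is precisely the assertion that $\tensor(\sigma,w)$ is the universal operator algebra generated by a universal representation of $(\sigma,w)$, isometric representations being in particular completely contractive covariant representations in the sense of Muhly--Solel, so that the operator-algebra (and not merely algebraic) structure is what is classified.

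I expect the only real work to be in the two-way correspondence of the second paragraph --- concretely, the well-definedness and isometry of the map $t$ built from $(\pi,T)$, which is where the relation $T^*\pi(f)T=\pi(P(\sigma,w)(f))$ and the inner-product formula of $C(\sigma,w)$ enter; the remaining steps are a direct appeal to the universal properties of $\Tt(E)$ and $\tensor(E)$ and to the cyclicity of the GNS vector $1\odot1$. Alternatively one could quote the corresponding statement for Toeplitz--Pimsner algebras of positive maps directly from Section~3 of \cite{ExelCroPro} and merely verify that $C(\sigma,w)$ is the GNS correspondence of $P(\sigma,w)$, which is the content of Proposition~\ref{proposition:GNS-computation}.
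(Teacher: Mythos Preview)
Your proposal is correct and follows essentially the same route as the paper: establish a bijection between representations of $(\sigma,w)$ and isometric (Toeplitz) representations of the GNS correspondence, then invoke the universal property of the Toeplitz and tensor algebras. The paper simply outsources the first step to Proposition~3.10 of \cite{ExelCroPro} and the second to Theorem~2.12 of \cite{Muhly-Solel-Tensor-Rep}, whereas you unpack the bijection by hand; your closing ``alternatively'' remark about quoting Section~3 of \cite{ExelCroPro} together with Proposition~\ref{proposition:GNS-computation} is in fact exactly the proof the paper gives.
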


\begin{proof}
Since representations of $(\sigma,w)$ are exactly representations of $(C(X), P(\sigma,w))$ in the sense of Definition 3.1 in \cite{ExelCroPro}, by Proposition 3.10 in \cite{ExelCroPro}, these are in bijection with \emph{isometric} (in the sense of Definition 2.11 in \cite{Muhly-Solel-Tensor-Rep}) representations $(\pi, \pi_{P(\sigma,w)})$ of the GNS C*-correspondence $GNS(\sigma,w)$, that satisfy $\pi_{P(\sigma,w)}(f \otimes g) = \pi(f) T \pi(g)$. By Theorem 2.12 in \cite{Muhly-Solel-Tensor-Rep}, these are in bijection with the representations $\tau_{(\pi, \pi_{P(\sigma,w)})}$ of $\Tt(\sigma,w)$ that send $\tensor(\sigma,w)$ to $\overline{Alg}(\pi(C(X)) \cup \pi_{P(\sigma,w)}(GNS(\sigma,w)))$. Hence, if $(\pi, T)$ is a representation of $(\sigma,w)$, it promotes to a representation $\tau_{(\pi, T)}$ of  $\Tt(\sigma,w)$ that sends $\tensor(\sigma,w)$ to $\overline{Alg}(\pi, T)$, and every such representations $\pi$ of $\Tt(\sigma,w)$ arises in this way, and must send $\tensor(\sigma,w)$ to $\overline{Alg}(\pi, T)$.
\end{proof}

\subsection{Automatic continuity}
We now wish to show that under certain conditionson a WPS, an algebraic homomorphism onto $\tensor(\sigma,w)$ is automatically bounded. We will follow the ideas of Davidson, Donsig, Hudson, Katsoulis and Kribs used in \cite{IsoConjAlg, Donsig-Hudson-Katsoulis, IsoDirGrAlg}.
For Banach algebras $\Aa$ and $\Bb$ suppose we have a surjective homomorphism $\varphi: \Aa \rightarrow \Bb$. Let
$$
\Ss(\varphi) = \{ \ b\in \Bb \ | \ \text{there is a sequence } (a_n) \text{ in } \Aa \text{ with } a_n \rightarrow 0 \text{ and } \varphi(a_n) \rightarrow b \ \}
$$
It is readily verified that the graph of $\varphi$ is closed if and only if $\Ss(\varphi) = \{ 0\}$, hence, by the closed graph theorem $\varphi$ is continuous if and only if $\Ss(\varphi) = \{0\}$. The following first appeared in \cite{Donsig-Hudson-Katsoulis} as an adaptation of a lemma by Sinclair, the origins of which can be traced back to \cite{Sinclair}.

\begin{lemma}[Sinclair]
Let $\Aa$ and $\Bb$ be Banach algebras and $\varphi : \Aa \rightarrow \Bb$ be a surjective algebraic homomorphism. Let $(b_n)_{n\in \nn}$ be any sequence in $\Bb$. Then there exists $N\in \nn$ such that for all $n\geq N$,
$$
\overline{b_1 b_2 ... b_n \Ss(\varphi)} = \overline{b_1 b_2 ... b_N \Ss(\varphi)} \ \text{ and } \ \overline{\Ss(\varphi) b_n ... b_2 b_1} = \overline{\Ss(\varphi) b_N ... b_2 b_1}
$$
\end{lemma}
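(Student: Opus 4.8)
The plan is to recognize this as Sinclair's stability lemma for the \emph{separating space} $\Ss(\varphi)$ of a surjective homomorphism of Banach algebras, and to prove it by a gliding–hump argument exploiting the completeness of $\Aa$, following \cite{Sinclair} and \cite{Donsig-Hudson-Katsoulis}. I will establish the first displayed identity; the second is identical after interchanging the roles of left and right multiplication throughout, so I treat only one side.

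First I would record the structural facts. The set $\Ss(\varphi)$ is a closed linear subspace of $\Bb$, since $\Ss(\varphi)=\bigcap_{m\geq 1}\overline{\varphi(\{a\in\Aa:\|a\|\leq 1/m\})}$, an intersection of closed balanced convex sets. Crucially, because $\varphi$ is \emph{surjective}, $\Ss(\varphi)$ is a two–sided ideal of $\Bb$: writing $b=\varphi(a)$ and $s=\lim_n\varphi(a_n)$ with $a_n\to 0$, one has $aa_n\to 0$ and $\varphi(aa_n)=b\varphi(a_n)\to bs$, so $bs\in\Ss(\varphi)$, and symmetrically $sb\in\Ss(\varphi)$. (More generally, for any bounded linear $\psi:\Bb\to\Bb$ one has $\Ss(\psi\circ\varphi)=\overline{\psi(\Ss(\varphi))}$, which identifies $\Ss_n:=\overline{b_1\cdots b_n\,\Ss(\varphi)}$ with the separating space of $a\mapsto b_1\cdots b_n\varphi(a)$.) It follows that $\Ss_1\supseteq\Ss_2\supseteq\cdots$ is a decreasing chain of closed subspaces, since $b_1\cdots b_{n+1}\Ss(\varphi)=b_1\cdots b_n\,(b_{n+1}\Ss(\varphi))\subseteq b_1\cdots b_n\,\Ss(\varphi)$, and moreover $\Ss_{n+1}=\overline{b_{n+1}\Ss_n}$. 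The assertion of the lemma is exactly that this chain is eventually constant.

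Now I would argue by contradiction. If $(\Ss_n)$ never stabilizes, then passing to a subsequence $0=n_0<n_1<n_2<\cdots$ with $\Ss_{n_{k+1}}\subsetneq\Ss_{n_k}$ and using surjectivity of $\varphi$ a second time to pick $\gamma_k\in\Aa$ with $\varphi(\gamma_k)=b_{n_{k-1}+1}\cdots b_{n_k}$, we reduce to the situation of a sequence $\gamma_1,\gamma_2,\dots$ in $\Aa$ with $c_k:=\varphi(\gamma_k)$ satisfying $\overline{c_1\cdots c_{k+1}\Ss(\varphi)}\subsetneq\overline{c_1\cdots c_k\Ss(\varphi)}$ for every $k$. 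For each $k$ choose $s_k\in\Ss(\varphi)$ with $\|s_k\|=1$ and $c_1\cdots c_k s_k\notin\overline{c_1\cdots c_{k+1}\Ss(\varphi)}$, and, since $s_k$ lies in the separating space, choose $\alpha_k\in\Aa$ with $\|\varphi(\alpha_k)-s_k\|$ as small as desired and $\|\alpha_k\|$ small enough that $\|\gamma_1\cdots\gamma_k\alpha_k\|<2^{-k}$; then $a:=\sum_{k\geq 1}\gamma_1\cdots\gamma_k\alpha_k$ converges in $\Aa$ by completeness. Splitting off finite initial segments of this series, applying $\varphi$ (which is only additive on finite sums, so that $\varphi(a)-\sum_{k=1}^{N}c_1\cdots c_k\varphi(\alpha_k)=\varphi\big(\sum_{k>N}\gamma_1\cdots\gamma_k\alpha_k\big)$ for every $N$), and testing against a Hahn--Banach functional that annihilates $\overline{c_1\cdots c_{k+1}\Ss(\varphi)}$ while detecting $c_1\cdots c_k s_k$, one forces the single element $\varphi(a)\in\Bb$ to have a nonzero, suitably large component in infinitely many pairwise "independent" directions, a contradiction. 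This yields the first identity, and the right–handed argument yields the second.

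The main obstacle is precisely this last gliding–hump step: it requires choosing the approximants $\alpha_k$ and organizing the telescoping so that the uncontrolled tail contributions $\varphi\big(\sum_{k>N}\gamma_1\cdots\gamma_k\alpha_k\big)$ and the contributions from the initial segment are dominated by the chosen functionals, which is the delicate bookkeeping carried out in \cite{Sinclair} and \cite{Donsig-Hudson-Katsoulis}. Everything preceding it — that $\Ss(\varphi)$ is a closed two–sided ideal, the monotonicity $\Ss_{n+1}=\overline{b_{n+1}\Ss_n}$, and the reduction to the strictly decreasing case — is routine.
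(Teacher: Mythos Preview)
The paper does not prove this lemma at all: it is stated with attribution to \cite{Donsig-Hudson-Katsoulis} and \cite{Sinclair} and then used as a black box in the proof of Theorem~\ref{theorem:automatic-continuity}. So there is nothing to compare your argument against in the paper itself; your sketch is in fact an outline of the classical proof from the very references the paper cites.

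Your outline is essentially the standard gliding--hump argument and is correct in spirit. One small slip: the identity ``$\Ss_{n+1}=\overline{b_{n+1}\Ss_n}$'' is not right as written (multiplication is on the wrong side; $b_{n+1}\,\overline{b_1\cdots b_n\Ss(\varphi)}$ is not $\overline{b_1\cdots b_{n+1}\Ss(\varphi)}$). What you actually need, and what your argument uses, is only the containment $\Ss_{n+1}\subseteq\Ss_n$, which you derive correctly from the ideal property, together with the identification of $\Ss_n$ as the separating space of $a\mapsto b_1\cdots b_n\varphi(a)$. The rest of your sketch---the reduction to a strictly decreasing chain, the choice of $s_k$, $\alpha_k$, the absolutely convergent sum $a=\sum\gamma_1\cdots\gamma_k\alpha_k$, and the Hahn--Banach separation---is exactly the bookkeeping carried out in \cite{Sinclair} and \cite{Donsig-Hudson-Katsoulis}, and you are right to flag that final step as the only genuinely delicate one.
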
 

For every WPS $(\sigma,w)$ we can define the weight function of the system to be $w_{\sigma}(x) = P(\sigma,w)(1)(x) = \sum_{i : x \in X_i} w_i(x)$ which is positive continuous function that vanishes only on $X - \cup_{i=1}^d X_i$. 

\begin{defi}
Let $\sigma$ be a partial system on $X$. We say $\sigma$ is well-supported if $\{X_i\}$ covers $X$, where $X_i$ are the clopen domain of definition for $\sigma_i$.
\end{defi}

When we have a well-supported $(\sigma,w)$, we define the normalized WPS $(\sigma,\widetilde{w})$ by setting
$\widetilde{w} = (\frac{w_1}{w_{\sigma}},...,\frac{w_n}{w_{\sigma}})$, and we say that $(\sigma,w)$ is normalized if $w_{\sigma} = 1$. Note that when $(\sigma,w)$ is a well-supported normalized system, we must have that $P(\sigma,w)$ is a unital map, or in other words a Markov-Feller operator, and so for every representation $(\pi, T)$ of $(\sigma,w)$ with unital $\pi$,  we have that $T^*T = T^*\pi(1) T = \pi(P(\sigma,w)(1)) = 1$ and hence $T$ must be an isometry.

\begin{theorem} \label{theorem:automatic-continuity}
Let $(\sigma,w)$ and $(\tau,u)$ be WPS operating on $X$ and $Y$ respectively such that either $\sigma$ or $\tau$ are well-supported. Then every algebraic isomorphism $\varphi :\tensor(\sigma,w) \rightarrow \tensor(\tau,u)$ is automatically a bounded isomorphism.
\end{theorem}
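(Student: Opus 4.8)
The plan is to invoke the closed graph theorem which, as observed above, reduces the statement to showing that the separating space $\Ss(\varphi)$ is $\{0\}$. Since $\varphi$ is surjective, a routine argument (using only continuity of multiplication in $\tensor(\tau,u)$ and the fact that $a_n \to 0$ implies $c a_n \to 0$ and $a_n c \to 0$) shows that $\Ss(\varphi)$ is a norm-closed two-sided ideal of $\tensor(\tau,u)$. Also, a bijective homomorphism between Banach algebras is bounded if and only if its inverse is, so after replacing $\varphi$ by $\varphi^{-1}$ if necessary we may and do assume that the system attached to the codomain, which we keep calling $(\tau,u)$, is well-supported.

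The key point is to produce a single element of $\tensor(\tau,u)$ of degree one which is \emph{bounded below} as an operator on the Fock module, and to show that some power of it annihilates $\Ss(\varphi)$. Regard the constant function $\mathbf 1 \in C(Gr(\tau))$ as an element of $C(\tau,u)$ via Proposition \ref{proposition:GNS-computation}, and put $b := S^{(1)}_{\mathbf 1} \in \tensor(\tau,u)_1$. Computing in the Fock representation, $b^* b = \phi\big(\langle \mathbf 1, \mathbf 1\rangle_u\big)$, where $\phi$ is the left action of $C(Y)$, and from the inner product formula $\langle \mathbf 1, \mathbf 1\rangle_u(y) = \sum_{i : y \in Y_i} u_i(y) = P(\tau,u)(1)(y)$. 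Well-supportedness of $\tau$ says that every $y \in Y$ lies in some $Y_i$, on which $u_i$ is strictly positive, so $\langle \mathbf 1,\mathbf 1\rangle_u$ is strictly positive; being continuous on the compact space $Y$, it is bounded below by some $c > 0$. Hence $b^*b \geq c\cdot I$, so $b$ — and therefore every power $b^N$ — is bounded below, in particular injective.

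Next apply Sinclair's lemma to the constant sequence $b_n = b$: there is $N$ with $\overline{b^n\,\Ss(\varphi)} = \overline{b^N\,\Ss(\varphi)}$ for all $n \geq N$. Set $\tensor(\tau,u)_{\geq n} := \bigcap_{k<n}\Ker\Phi_k$, which is norm-closed and which, by the product formula of Proposition \ref{proposition:fourier-series}, satisfies $\tensor(\tau,u)_{\geq p}\cdot \tensor(\tau,u)_{\geq q} \subseteq \tensor(\tau,u)_{\geq p+q}$. Since $b \in \tensor(\tau,u)_1 \subseteq \tensor(\tau,u)_{\geq 1}$ we get $b^n \in \tensor(\tau,u)_{\geq n}$, whence $b^n\,\Ss(\varphi) \subseteq \tensor(\tau,u)_{\geq n}$. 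Consequently $\overline{b^N\,\Ss(\varphi)} \subseteq \bigcap_{n \geq N}\tensor(\tau,u)_{\geq n}$, and this intersection is $\{0\}$ because an element of $\tensor(\tau,u)$ all of whose Fourier coefficients vanish is $0$ by Proposition \ref{proposition:fourier-series}. Thus $b^N\,\Ss(\varphi) = \{0\}$. Finally, for $s \in \Ss(\varphi) \subseteq \Ll(\Ff_{C(\tau,u)})$ the operator identity $b^N s = 0$ forces the range of $s$ into $\Ker b^N = \{0\}$, so $s = 0$. Hence $\Ss(\varphi) = \{0\}$, $\varphi$ has closed graph, and $\varphi$ is a bounded isomorphism.

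The main obstacle, and the place where the hypothesis is genuinely used, is the second paragraph: the existence of a bounded-below degree-one generator is equivalent to $\tau$ having ``no sinks'', i.e. to well-supportedness, and without it this method (and the conclusion itself) breaks down. Everything else is soft: the ideal structure of $\Ss(\varphi)$, Sinclair's lemma, and the observation that an injective operator $b^N$ with $b^N\Ss(\varphi)=\{0\}$ forces $\Ss(\varphi)=\{0\}$. If one prefers to avoid the operator-theoretic step, an alternative is to first record that $\Phi_0^{\tau}\circ\varphi$ maps into the commutative semisimple algebra $C(Y)$ and is therefore automatically continuous, giving $\Ss(\varphi)\subseteq\Ker\Phi_0$; but this is not needed for the argument above.
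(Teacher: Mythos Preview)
Your proof is correct and follows essentially the same route as the paper's: reduce to the well-supported codomain, use the degree-one shift $b=S^{(1)}_{\mathbf 1}$, apply Sinclair's lemma to the constant sequence $b_n=b$, push $\overline{b^N\Ss(\varphi)}$ into $\bigcap_n\Ker\Phi_n=\{0\}$, and use injectivity of $b^N$. The only cosmetic difference is that the paper first normalizes the weights so that $b$ becomes a genuine isometry, whereas you work directly with $b^*b\geq c\cdot I$; your version is a touch more direct and avoids invoking the branch-transition conjugacy between $(\tau,u)$ and its normalization.
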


\begin{proof}
Suppose without loss of generality that $\tau$ is well-supported. Since for every edge $e\in Gr(\tau)$ we have that $\frac{\widetilde{u}}{u}(e) = u_{\tau}(s(e))^{-1}$, we see that $\frac{\widetilde{u}}{u}$ is continuous on $Gr(\tau)$ so that $(\tau,u)$ and $(\tau,\widetilde{u})$ are branch-transition conjugate. By Corollary \ref{cor:branch-transition-char} and Theorem \ref{theorem:isometric-graded} used in tandem, $\tensor(\tau,u)$ is graded completely isometrically isomorphic to $\tensor(\tau,\widetilde{u})$. So we assume without loss of generality that $(\tau,u)$ is normalized. In this case, the constant function $1 = 1 \odot 1 \in C(Gr(\tau))$ gives rise to an isometry $W_{(\tau,u)}:= S^{(1)}_{1} = S^{(1)}_{1 \odot 1} \in \Ker \Phi_0 \subset \tensor(\tau,u)$, since $(\tau,u)$ is well-supported and normalized.

Now suppose towards contradiction that there is $0 \neq T \in \Ss(\varphi)$. Since $W_{(\tau,u)}$ is an isometry, we have that $W_{(\tau,u)}^n T \neq 0$ for all $n\in \nn$.
By Sinclair's lemma there is some $N \in \nn$ such that for all $n \geq N$ we have
$$
W_{(\tau,u)}^N\Ss(\varphi) = W_{(\tau,u)}^n \Ss(\varphi) \subset \bigcap_{k<n}\Ker \Phi_k
$$
So in fact we must have that $W_{(\tau,u)}^N\Ss(\varphi) = \cap_{k\in \nn} \Ker \Phi_k = \{0 \}$, in contradiction to having $W_{(\tau,u)}^NT \neq 0$ as shown above.
\end{proof}

\section{Character space} \label{sec:character-space}

In this subsection, we adapt the methods of Hadwin and Hoover \cite{ConjTrans}, which were used in the solution of the conjugacy problem \cite{IsoConjAlg}, to compute the character space of $\tensor(\sigma,w)$ for any WPS $(\sigma,w)$. We then use this to show that every algebraic isomorphism $\varphi : \tensor(\sigma,w) \rightarrow \tensor(\tau,u)$ is automatically base-detecting, so that the base spaces $X$ and $Y$ can be identified. Finally, we provide a criterion to detect semi-gradedness from the induced homeomorphism on the character spaces.

\subsection{Computing the character space}
Let $(\sigma,w)$ be a WPS on compact $X$. Denote by $\Mm(\sigma,w)$ the space of multiplicative linear functionals on $\Tt_+(\sigma,w)$ with its weak* topology. $\Mm(\sigma,w)$ is partitioned by $X$ since for every $\theta \in \Mm(\sigma,w)$ there is a unique $x\in X$ such that $\theta | _{C(X)} = \delta_x$. We denote by $\Mm(\sigma,w)_x$ the weak* closed subset of $\theta \in \Mm(\sigma,w)$ satisfying $\theta | _{C(X)} = \delta_x$, and we let $\theta_{x,0}$ be the unique element in $\Mm(\sigma,w)_x$ such that $\theta_{x,0}(\Ker \Phi^P_0) = \{0\}$. Denote by $W_{(\sigma,w)} := S^{(1)}_{1} = S^{(1)}_{1\odot 1}$, the shift operator by the constant function $1 = 1 \odot 1 \in C(\sigma,w)$. Note that since $\Ker \Phi^P_0$ is the closed two sided ideal generated by $W_{(\sigma,w)}$, we have that $\theta_{x,0}$ is the unique element in $\Mm(\sigma,w)_x$ such that $\theta_{x,0}(W_{(\sigma,w)}) = 0$. 

\begin{defi}
Let $\sigma$ be a $d$-variable partial system on compact $X$. We say that $x\in X$ is a \emph{fixed point} for $\sigma$ if $\sigma_i(x) = x \in X_i$ for some $1\leq i \leq d$. We denote by $\Fix(\sigma)$ the closed set of fixed points of $\sigma$.
\end{defi}

\begin{lemma} \label{lemma:fixed-point-char}
Let $(\sigma,w)$ be a WPS on compact $X$, $x\in X$, $\theta \in \Mm(\sigma,w)_x$ and $\xi \in C(\sigma,w)$. Then we have
\begin{enumerate}
\item
If $x\in \Fix(\sigma)$ then $ \theta(S^{(1)}_{\xi}) = \xi(x,x) \theta(W_{(\sigma,w)})$
\item
If $x\notin \Fix(\sigma)$ then $\theta(S^{(1)}_{\xi}) = 0$
\end{enumerate}
In particular, when $x \notin \Fix(\sigma)$, we have $\Mm(\sigma,w)_x = \{ \theta_{x,0} \}$.
\end{lemma}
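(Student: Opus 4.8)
The plan is to work entirely with the covariance relation $f\,S^{(1)}_{\xi}\,g = S^{(1)}_{f\cdot\xi\cdot g}$ for $f,g\in C(X)$ and $\xi\in C(\sigma,w)$, where $(f\cdot\xi\cdot g)(e)=f(r(e))\xi(e)g(s(e))$ (this is immediate from the Banach bimodule identification of Proposition \ref{proposition:fourier-series}), together with the observation that $x\in\Fix(\sigma)$ precisely when $(x,x)\in Gr(\sigma)$. Two structural facts I would record first: $Gr(\sigma)=\bigcup_{i=1}^d\{(\sigma_i(y),y):y\in X_i\}$ is a finite union of continuous images of compact clopen sets, hence a \emph{closed} subset of $X\times X$; and by Proposition \ref{proposition:GNS-computation} the norm of $S^{(1)}_{\xi}$ in $\tensor(\sigma,w)$ equals $\|\xi\|_{C(\sigma,w)}$, which is equivalent to $\|\xi\|_{\infty}$ on $C(Gr(\sigma))$. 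I also use that $\theta$, being a character on a unital Banach algebra, satisfies $\|\theta\|\le 1$ and, by hypothesis, $\theta|_{C(X)}=\delta_x$.

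The one lemma I would isolate is a \emph{localization step}: if $\zeta\in C(\sigma,w)$ vanishes on $(U\times V)\cap Gr(\sigma)$ for some open neighborhoods $U,V$ of $x$, then $\theta(S^{(1)}_{\zeta})=0$. Indeed, pick $f,g\in C(X)$ with $0\le f,g\le 1$, $f(x)=g(x)=1$, $\supp f\subseteq U$, $\supp g\subseteq V$; then $f\cdot\zeta\cdot g=0$ in $C(Gr(\sigma))$, so applying $\theta$ to $S^{(1)}_{f\cdot\zeta\cdot g}=f\,S^{(1)}_{\zeta}\,g$ gives $f(x)g(x)\,\theta(S^{(1)}_{\zeta})=0$, i.e. $\theta(S^{(1)}_{\zeta})=0$. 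Claim (2) follows at once: if $x\notin\Fix(\sigma)$ then $(x,x)\notin Gr(\sigma)$, and since $Gr(\sigma)$ is closed there are open $U,V\ni x$ with $(U\times V)\cap Gr(\sigma)=\varnothing$, so every $\xi\in C(\sigma,w)$ vanishes there and the localization step yields $\theta(S^{(1)}_{\xi})=0$.

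For (1), assume $x\in\Fix(\sigma)$ and decompose $\xi=\xi(x,x)\cdot\mathbf 1+\eta$, where $\mathbf 1$ is the constant function (so $S^{(1)}_{\mathbf 1}=W_{(\sigma,w)}$) and $\eta(x,x)=0$; it thus suffices to prove $\theta(S^{(1)}_{\eta})=0$. Given $\varepsilon>0$, continuity of $\eta$ at $(x,x)$ furnishes open $U,V\ni x$ with $|\eta|<\varepsilon$ on $(U\times V)\cap Gr(\sigma)$; choose $f,g\in C(X)$ with $0\le f,g\le 1$, supported in $U$ and $V$ respectively, and identically $1$ on smaller open neighborhoods $U'\subseteq U$, $V'\subseteq V$ of $x$. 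Writing $\eta=f\cdot\eta\cdot g+\zeta$, the remainder $\zeta$ vanishes on $(U'\times V')\cap Gr(\sigma)$ (there $f\circ r\equiv g\circ s\equiv 1$), so $\theta(S^{(1)}_{\zeta})=0$ by the localization step, while $\|f\cdot\eta\cdot g\|_{\infty}\le\varepsilon$ gives $|\theta(S^{(1)}_{f\cdot\eta\cdot g})|\le\|S^{(1)}_{f\cdot\eta\cdot g}\|\le K\varepsilon$ for a fixed constant $K$ by Proposition \ref{proposition:GNS-computation}. Letting $\varepsilon\to 0$ gives $\theta(S^{(1)}_{\eta})=0$, hence $\theta(S^{(1)}_{\xi})=\xi(x,x)\,\theta(W_{(\sigma,w)})$. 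For the last assertion, $\tensor(\sigma,w)$ is generated as a Banach algebra by $C(X)$ and $\{S^{(1)}_{\xi}:\xi\in C(\sigma,w)\}$, so any $\theta\in\Mm(\sigma,w)_x$ is determined by $\theta|_{C(X)}=\delta_x$ and the numbers $\theta(S^{(1)}_{\xi})$; when $x\notin\Fix(\sigma)$ these all vanish by (2), and $\theta_{x,0}$ is such a character, so $\Mm(\sigma,w)_x=\{\theta_{x,0}\}$.

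The only delicate point is arranging the bump functions in (1) to be \emph{flat} (identically $1$) near $x$ rather than merely equal to $1$ at the point, so that the remainder $\zeta$ genuinely vanishes on a neighborhood of $(x,x)$ in $Gr(\sigma)$ and the localization step applies; everything else is a routine $\varepsilon$-argument in the spirit of Hadwin--Hoover, resting on the closedness of $Gr(\sigma)$ and the norm equivalence from Proposition \ref{proposition:GNS-computation}.
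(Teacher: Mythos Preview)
Your proof is correct and follows essentially the same approach as the paper's: both use the bimodule identity $f\,S^{(1)}_{\xi}\,g = S^{(1)}_{f\cdot\xi\cdot g}$ together with $\theta(f)=f(x)$, then bound by the correspondence norm of the localized element and shrink the neighborhoods. The only difference is packaging: your localization lemma and the decomposition $\eta = f\cdot\eta\cdot g + \zeta$ (with flat bump functions) are an unnecessary detour, since multiplicativity already gives $\theta(S^{(1)}_{\eta}) = \theta(f)\,\theta(S^{(1)}_{\eta})\,\theta(g) = \theta(S^{(1)}_{f\cdot\eta\cdot g})$ directly for any $f,g$ with $f(x)=g(x)=1$; the paper then bounds $\|f\cdot\eta\cdot g\|_{C(\sigma,w)}$ via the explicit sum formula and takes infima over $U$ and $V$, which is the same $\varepsilon$-argument you run through the sup-norm equivalence of Proposition~\ref{proposition:GNS-computation}.
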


\begin{proof}
First we show (1). Let $x\in X$ be a fixed point for $\sigma$. For every open neighborhood $U$ of $x$, by Urysohn's Lemma, there is a continuous function $f_U:X \rightarrow [0,1]$ with $f_U(x) = 1$ and $f_U(y) = 0$ for $y \notin U$.
Thus, for $\theta \in \Mm(\sigma,w)_x$ and $U,V$ open neighborhoods of $x$ we have,
$$
|\theta(S^{(1)}_{\xi} - \xi(x,x) W_{(\sigma,w)})|^2 = |\theta(f_{U} \cdot (S^{(1)}_{\xi} - \xi(x,x) W_{(\sigma,w)})) \cdot f_{V})|^2 = |\theta(S_{f_{U} \cdot (\xi  - \xi(x,x) 1)  \cdot f_{V}})|^2 \leq
$$
$$ 
\| f_{U}  \cdot (\xi  - \xi(x,x)1)  \cdot f_{V} \|^2 = \sup_{y \in X} \sum_{i: y \in X_i} |f_{U}(\sigma_i(y))|^2 |\xi(\sigma_i(y),y)  - \xi(x,x)|^2 |f_{V}(y)|^2 w_i(y) \leq
$$
$$
\sup_{y \in V} \sum_{i : y \in X_i} |f_{U}(\sigma_i(y))|^2 |\xi(\sigma_i(y),y)  - \xi(x,x)|^2w_i(y)
$$
Taking infimum over all open neighborhoods $V$ of $x$ we get
$$
|\theta(S^{(1)}_{\xi} - \xi(x,x) W_{(\sigma,w)})|^2 \leq \sum_{i : x\in X_i} |f_{U}(\sigma_i(x))|^2 |\xi(\sigma_i(x),x)  - \xi(x,x)|^2 w_i(x) \leq 
$$
$$
\sum_{i: \ x\in X_i, \ \sigma_i(x)\in U} |\xi(\sigma_i(x),x)  - \xi(x,x)|^2 w_i(x)
$$
Taking infimum over all $U$ open neighborhoods of $x$, we obtain 
$$
|\theta(S^{(1)}_{\xi} - \xi(x,x) W_{(\sigma,w)})|^2 \leq \sum_{i: \sigma_i(x) = x \in X_i} |\xi(\sigma_i(x),x)  - \xi(x,x)|^2 w_i(x) = 0
$$
and we must have that $\theta(S^{(1)}_{\xi}) = \xi(x,x) W_{(\sigma,w)}$.

In order to show (2), note that if $x\notin \Fix(\sigma)$, a similar chain of inequalities, replacing $\xi(x,x)W_{(\sigma,w)}$ by $0$ above, would yield that for all $\theta \in \Mm(\sigma,w)_x$ we have $\theta(S^{(1)}_{\xi}) = 0$.

Finally, if $x\notin \Fix(\sigma)$, we have that $\theta(W_{(\sigma,w)}) = 0$ for all $\theta \in \Mm(\sigma,w)_x$ so that $\theta(\Ker \Phi_0) = 0$ for all $\theta \in \Mm(\sigma,w)_x$. Now since $\theta_{x,0}$ is the only element in $\Mm(\sigma,w)_x$ with $\theta_{x,0}(\Ker \Phi_0) = 0$, we must then have that $\theta = \theta_{x,0}$ and $\Mm(\sigma,w)_x = \{ \theta_{x,0}\}$.
\end{proof}

Now, in the case where $x\in X$ is a fixed point for $\sigma$, we are interested to know how $\theta \in \Mm(\sigma,w)_x$ acts on iterates $S^{(n)}_{\xi}$ for $\xi\in C(\sigma,w)^{\otimes n} \cong C(Gr(\sigma^n))$. Recall the discussion preceding Proposition \ref{proposition:iterate-computation} where we identified $Gr(\sigma^n)$ with the collection of orbits of length $n+1$ inside $X^{n+1}$, that is the collection of sequences $(x_{n+1},..., x_1)$ such that for every $1\leq m \leq n$ there is some $1\leq i \leq d$ with $\sigma_i(x_m) = x_{m+1} \in X_i$.

Thus, take $\xi^{(1)},..., \xi^{(n)} \in C(\sigma,w)$ and note that by Lemma \ref{lemma:fixed-point-char},
$$\theta(S^{(n)}_{\xi^{(1)} \otimes ...\otimes \xi^{(n)}}) = \theta(S^{(1)}_{\xi^{(1)}}) \cdot ... \cdot \theta(S^{(1)}_{\xi^{(n)}}) = \xi^{(n)}(x,x) \cdot ... \cdot \xi^{(1)}(x,x) \cdot \theta(W_{(\sigma,w)})^n$$ 
By supremum norm approximation we obtain for every $\xi \in C(Gr(\sigma^n))$ that
$$
\theta(S^{(n)}_{\xi}) = \xi(x,...,x) \cdot \theta(W_{(\sigma,w)})^n
$$
due to density of the linear span of elements of the form $\xi^{(1)} \otimes ...\otimes \xi^{(n)}$ in $C(\sigma,w)^{\otimes n} \cong C(Gr(\sigma^n))$, with the supremum norm, established by Proposition \ref{proposition:iterate-computation}.

The next proposition is an adaptation of the methods of Section 3 in \cite{IsoConjAlg}, originally used by Hadwin and Hoover in \cite{ConjTrans}. For a WPS $(\sigma,w)$, recall that we defined the weight of an edge $(y,x) \in Gr(\sigma)$ to be $w(y,x) = \sum_{i : \sigma_i(x) = y, \ x\in X_i}w_i(x)$.

\begin{proposition} \label{proposition:analytic-discs}
Let $X$ be a compact space, $(\sigma,w)$ a WPS on $X$ and $x\in \Fix(\sigma)$. Then $\Mm(\sigma,w)_x \cong \overline{\dd}_{r^w_x}$ via the map $\theta \mapsto \theta(W_{(\sigma,w)})$, where $\overline{\dd}_{r^w_x}$ is the closed disc of radius $r^w_x =\sup_{\theta \in \Mm(\sigma,w)_x} |\theta(W_{(\sigma,w)})| = \sqrt{w(x,x)}$.

Moreover if $\Theta_x : \overline{\dd}_{r^w_x} \rightarrow \Mm(\sigma,w)_x$ is the homeomorphism above, then it is in fact pointwise analytic on $\dd_{r^w_x}$, in the sense that for every $T\in \Tt_+(\sigma,w)$, the function $\Theta_x(\cdot)(T): \dd_{r^w_x} \rightarrow \cc$ is analytic.
\end{proposition}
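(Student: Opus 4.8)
The plan is to identify $\Mm(\sigma,w)_x$ with a closed disc through the evaluation map $\theta \mapsto \theta(W_{(\sigma,w)})$: first pin down its image, then upgrade the resulting bijection to a homeomorphism, and finally read off analyticity from Fourier expansions. Write $W := W_{(\sigma,w)}$ and $r := \sqrt{w(x,x)}$, which makes sense since $x \in \Fix(\sigma)$ means $(x,x) \in Gr(\sigma)$. Two preliminary facts: (i) by Lemma \ref{lemma:fixed-point-char}(1) and the computation following it, every $\theta \in \Mm(\sigma,w)_x$ satisfies $\theta|_{C(X)} = \delta_x$ and $\theta(S^{(n)}_\xi) = \xi(x,\dots,x)\,\theta(W)^n$ for all $n$ and all $\xi \in C(\sigma,w)^{\otimes n} \cong C(Gr(\sigma^n))$, and since any $T \in \Tt_+(\sigma,w)$ is the norm-limit of the Cesaro sums $\sigma_N(T)$ formed from its Fourier coefficients (Proposition \ref{proposition:fourier-series}) and $\theta$ is contractive, $\theta$ is completely determined by the scalar $z := \theta(W)$; hence $\theta \mapsto \theta(W)$ is injective and weak*-continuous on $\Mm(\sigma,w)_x$, which is weak*-compact (it is the weak*-closed set $\{\theta : \theta|_{C(X)} = \delta_x\}$ inside the weak*-compact set of characters of the unital Banach algebra $\Tt_+(\sigma,w)$). (ii) For $\xi \in C(Gr(\sigma^n))$, evaluating the supremum defining $\|S^{(n)}_\xi\|^2$ (which equals $\sup_y \sum_{s(\mu)=y}|\xi(\mu)|^2 w(\mu)$ by Propositions \ref{proposition:fourier-series} and \ref{proposition:iterate-computation}) at $y = x$ along the constant loop-path $(x,\dots,x)$, whose weight is $w(x,x)^n$, yields
$$
|\xi(x,\dots,x)|^2\,w(x,x)^n \;\le\; \|S^{(n)}_\xi\|^2 .
$$

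Next I would determine the image of $\theta \mapsto \theta(W)$. For the inclusion in $\overline{\dd}_r$: if $\xi \in C(Gr(\sigma))$ has $\xi(x,x) = 1$ then $\theta(S^{(1)}_\xi) = \theta(W)$, so $|\theta(W)| \le \|S^{(1)}_\xi\|$, and I claim $\inf\{\|S^{(1)}_\xi\| : \xi(x,x)=1\} = r$. The bound ``$\ge r$'' is the $n=1$ case of the displayed estimate; the bound ``$\le r$'' is a localization argument using clopenness of each $X_i$, compact Hausdorffness of $X$, and continuity of the $\sigma_i$ and $w_i$ to separate, inside a small neighborhood of $(x,x)$ in $Gr(\sigma)$, the loop indices $i$ with $\sigma_i(x) = x \in X_i$ (which together contribute exactly $w(x,x)$) from every other index (whose edges $(\sigma_i(y),y)$ are pushed away from $(x,x)$), and then choosing by Urysohn a function $\xi$ equal to $1$ at $(x,x)$ and supported in that neighborhood, so that $\|S^{(1)}_\xi\|^2 \le w(x,x) + \varepsilon$. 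For the inclusion of $\dd_r$: for $|z| < r$, define $\theta_z$ on the dense subalgebra of Fourier polynomials by $\theta_z\big(\sum_n S^{(n)}_{\xi_n}\big) = \xi_0(x) + \sum_{n \ge 1}\xi_n(x,\dots,x)z^n$; multiplicativity is the product rule of Proposition \ref{proposition:fourier-series} together with the fact that the constant loop-path splits multiplicatively under the identification of tensor iterates with path spaces, while the displayed estimate gives $|\theta_z(T)| \le \|T\|\sum_{n \ge 0}(|z|/r)^n$, so $\theta_z$ extends to a character in $\Mm(\sigma,w)_x$ with $\theta_z(W) = z$. Consequently the image of the compact set $\Mm(\sigma,w)_x$ under $\theta \mapsto \theta(W)$ is a compact subset of $\cc$ trapped between $\dd_r$ and $\overline{\dd}_r$, hence equals $\overline{\dd}_r$; in particular $r^w_x = r = \sqrt{w(x,x)}$, and $\theta \mapsto \theta(W)$ is a continuous bijection of a compact space onto a Hausdorff space, therefore a homeomorphism, whose inverse is $\Theta_x$.

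Finally, for the analyticity assertion, fix $T = \sum_n S^{(n)}_{\xi_n} \in \Tt_+(\sigma,w)$; the displayed estimate gives $|\xi_n(x,\dots,x)| \le r^{-n}\|\Phi_n(T)\| \le r^{-n}\|T\|$, so the power series $\sum_n \xi_n(x,\dots,x)\zeta^n$ has radius of convergence at least $r = r^w_x$, and for $|z| < r^w_x$ we have $\Theta_x(z)(T) = \theta_z(T) = \lim_N \theta_z(\sigma_N(T))$, which is the Cesaro mean of $\sum_n \xi_n(x,\dots,x)z^n$ and hence, by absolute convergence, equals its ordinary sum; thus $z \mapsto \Theta_x(z)(T)$ agrees on $\dd_{r^w_x}$ with a convergent power series and is analytic there. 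The one step I expect to be genuinely delicate is the localization estimate $\inf\{\|S^{(1)}_\xi\| : \xi(x,x)=1\} = \sqrt{w(x,x)}$: one must exploit the clopenness of the $X_i$ and the continuity of the $\sigma_i$ to ensure that, in a small enough neighborhood of $(x,x)$ inside $Gr(\sigma)$, no index beyond the loop indices $i$ with $\sigma_i(x) = x$ can contribute to the supremum defining the norm; the rest is routine bookkeeping with the already-developed Fourier-series machinery.
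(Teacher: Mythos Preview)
Your proposal is correct and follows essentially the same architecture as the paper's proof: construct the characters $\theta_{x,z}$ for $|z|<\sqrt{w(x,x)}$ via Fourier series and the bound $|\xi_n(x,\dots,x)|\le \|\Phi_n(T)\|/\sqrt{w(x,x)^n}$, establish the upper bound $|\theta(W)|\le\sqrt{w(x,x)}$ by localization, and conclude by the compact--Hausdorff bijection argument; analyticity then drops out of the power-series formula.

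The only methodological difference worth noting is in the localization step you flag as delicate. You propose taking a single Urysohn function $\xi\in C(Gr(\sigma))$ supported near $(x,x)$ and arguing that non-loop indices cannot contribute. The paper instead works on $X$: it takes Urysohn functions $f_U,f_V\in C(X)$ peaked at $x$ and uses $|\theta(W)|=|\theta(f_U\cdot W\cdot f_V)|\le\|S_{f_U\odot f_V}\|$, then shrinks $V$ first (forcing the source to $x$, so the sup collapses to the single sum $\sum_{i:x\in X_i}|f_U(\sigma_i(x))|^2 w_i(x)$) and $U$ second (killing every $i$ with $\sigma_i(x)\ne x$). This two-step infimum over neighborhoods in $X$ sidesteps the need to analyze which edges lie near $(x,x)$ in $Gr(\sigma)$, and in particular makes the possible branching at $x$ (where loop indices may split for nearby $y$) invisible. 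Your argument is fine too---the total edge-weight near $(x,x)$ with source $y$ is $\sum_{i:\sigma_i(x)=x}w_i(y)\to w(x,x)$ regardless of how those indices regroup into edges---but the paper's factored form $f_U\odot f_V$ is a cleaner bookkeeping device for exactly the issue you anticipated.
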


\begin{proof}
We first define a character $\theta_{x,z}$ for every $z\in \cc$ with $|z| < \sqrt{w(x,x)}$. Let $T\in \Tt_+(\sigma,w)$. By Proposition \ref{proposition:fourier-series} we get that $T$ has a Fourier series representation as $T = \sum_{n = 0}^{\infty}S^{(n)}_{\xi_n}$ converging Cesaro. We then define 
$$
\theta_{x,z}(T) = \sum_{n=0}^{\infty}\xi_n(x,...,x)z^n 
$$ 
Since $|z| < \sqrt{w(x,x)}$ and $|\xi_n(x,...,x)| \leq \frac{\|\xi_n \|}{\sqrt{w(x,x)^n}} = \frac{\| \Phi_n(T) \|}{\sqrt{w(x,x)^n}}$ we get
$$
|\theta_{x,z}(T)| \leq \sum_{n=0}^{\infty}|\xi_n(x,...,x)| |z|^n \leq \sum_{n=0}^{\infty} \|\Phi_n(T)\| \Big(\frac{|z|}{\sqrt{w(x,x)}}\Big)^n \leq
\| T \| \sum_{n=0}^{\infty} \Big(\frac{|z|}{\sqrt{w(x,x)}}\Big)^n 
$$
so that the above is a well-defined multiplicative linear functional on $\Tt_+(\sigma,w)$. Indeed, $\theta_{x,z}$ is linear and multiplicative due to multiplication of Fourier series given in Proposition \ref{proposition:fourier-series} and due to the identification of Proposition \ref{proposition:iterate-computation}.

We show that for every $\theta \in \Mm(\sigma,w)_x$, one must have $|\theta(W_{(\sigma,w)})| \leq \sqrt{w(x,x)}$. Indeed, for every open neighborhood of $x$, by Urysohn's Lemma, there is a continuous function $f_U : X \rightarrow [0,1]$ with $f_U(x) = 1$ and $f(y) = 0$ for $y\notin U$. Thus, for $U,V$ open neighborhoods of $x$ we have,
$$
|\theta(W_{(\sigma,w)})|^2 = |\theta(f_U \cdot W_{(\sigma,w)} \cdot f_V)|^2 = |\theta(S_{f_U \odot f_V})|^2 \leq \| S_{f_U \odot f_V} \|^2 =
$$
$$
\sup_{x\in X} \sum_{i : x\in X_i} |f_U(\sigma_i(x))|^2 |f_V(x)|^2 w_i(x) \leq \sup_{x\in V} \sum_{i : x\in X_i} |f_U(\sigma_i(x))|^2  w_i(x)
$$
Taking infimum over all open neighborhoods $V$ of $x$ we get that
$$
|\theta(W_{(\sigma,w)})|^2 \leq \sum_{i : x\in X_i} |f_U(\sigma_i(x))|^2 w_i(x) \leq 
\sum_{i : x\in X_i, \ \sigma_i(x) \in U} w_i(x)
$$
Taking infimum over all $U$ open neighborhoods of $x$, we obtain
$$
|\theta(W_{(\sigma,w)})|^2 \leq \sum_{i : \sigma_i(x) = x \in X_i} w_i(x) = w(x,x)
$$

Thus, we see that $|\theta(W_{(\sigma,w)})| \leq \sqrt{w(x,x)}$ and so the range of the map $\theta \mapsto \theta(W_{(\sigma,w)})$ contains the open disc $\dd_{r^w_x}$ which is dense in $\overline{\dd}_{r^w_x}$. 

Hence, the function from $\Mm(\sigma,w)_x$ to $\overline{\dd}_r$ given by 
$\theta \mapsto \theta(W_{(\sigma,w)})$ is a continuous injective map between compact spaces that has dense range, and thus must be a homeomorphism. 

For the last part, we see that the inverse of the above homeomorphism restricted to the open disc $\Theta_x : \dd_{r_x^w} \rightarrow \Mm(\sigma,w)_x$ is given by 
$$
\Theta_x(z)(T) = \theta_{x,z}(T) = \sum_{n=0}^{\infty}\xi_n(x,...,x)z^n
$$
for $T \in \Tt_+(\sigma,w)$ with Fourier series $T = \sum_{n = 0}^{\infty}S^{(n)}_{\xi_n}$. So that $\Theta_x(\cdot)(T)$ is analytic on $\dd_{r^w_x}$ for every fixed $T\in \Tt_+(\sigma,w)$.

\end{proof}

Let us call a subset of $\Mm(\sigma,w)$ an \emph{analytic disc} if it is the range of a pointwise analytic injective map $\Theta : \dd_s \rightarrow \Mm(\sigma,w)$, for $s>0$. For $f\in C(X)$ we must have that $\Theta(z)(\overline{f}) = \overline{\Theta(z)(f)}$, since for every $z\in \dd_s$ there is some $x\in X$ such that $\Theta(z) | _{C(X)} = \delta_x$. Thus, due to analyticity, $\Theta(\cdot)(f): \dd_s \rightarrow \cc$ must be constant $f(x)$, and so $\Theta(\dd_s)$ is contained in $\Mm(\sigma,w)_x$ for some $x \in X$. Proposition \ref{proposition:analytic-discs} tells us that for every fixed point $x\in X$ of $\sigma$, the interior of $\Mm(\sigma,w)_x$ is an analytic disc, and is hence maximal in the collection of analytic discs, due to the above observation and the fact that every analytic disc contained in $\Mm(\sigma,w)_x$ must be open due to the Open Mapping Theorem, and is hence contained in $\Theta_x(\dd_{r^w_x})$.

\subsection{Base-detection}
It turns out we can use maximal analytic discs together with the computation of the character space to obtain automatic base-detection for isomorphisms between tensor algebras associated to WPS. For any linear homomorphism $\theta$ between Banach algebras $\Aa$ and $\Bb$, we denote by $\theta^* : \Mm_{\Bb} \rightarrow \Mm_{\Aa}$ the map induced between their character spaces.

\begin{proposition} \label{proposition:base-detection}
Let $(\sigma,w)$ and $(\tau,u)$ be WPS on compact $X$ and $Y$ respectively and let $\varphi : \Tt_+(\sigma,w) \rightarrow \Tt_+(\tau,u)$ be an algebraic isomorphism. Then $\varphi$ is base-detecting and in fact, $\rho_{\varphi}^*$ is a bijection that sends fixed points of $\tau$ to those of $\sigma$.
\end{proposition}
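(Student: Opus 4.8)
The plan is to push $\varphi$ to the character spaces and use the description of maximal analytic discs obtained around Proposition~\ref{proposition:analytic-discs}. Since every multiplicative linear functional on a Banach algebra is automatically bounded, the dual map $\varphi^{*}:\Mm(\tau,u)\to\Mm(\sigma,w)$, $\theta\mapsto\theta\circ\varphi$, is a weak$^{*}$-continuous bijection between weak$^{*}$-compact Hausdorff spaces, hence a homeomorphism, and $(\varphi^{*})^{-1}=(\varphi^{-1})^{*}$. If $\Theta:\dd_{s}\to\Mm(\tau,u)$ is a pointwise analytic injective map then so is $\varphi^{*}\circ\Theta$, so $\varphi^{*}$ carries analytic discs to analytic discs; applying this to $\varphi^{-1}$ as well, $\varphi^{*}$ restricts to a bijection between the maximal analytic discs of the two character spaces. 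By Proposition~\ref{proposition:analytic-discs} and the discussion following it, the maximal analytic discs of $\Mm(\sigma,w)$ are precisely the sets $\Theta_{x}(\dd_{r^{w}_{x}})$ with $x\in\Fix(\sigma)$, and each such set is weak$^{*}$-dense in the weak$^{*}$-closed fibre $\Mm(\sigma,w)_{x}$; the same holds for $(\tau,u)$.

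Next I would extract a homeomorphism $\gamma:Y\to X$. Let $q_{X}:\Mm(\sigma,w)\to X$ send $\theta$ to the unique $x$ with $\theta|_{C(X)}=\delta_{x}$, and similarly $q_{Y}$; these are continuous surjections of compact Hausdorff spaces, hence quotient maps, with fibres $\Mm(\sigma,w)_{x}$ and $\Mm(\tau,u)_{y}$. I claim $\varphi^{*}$ maps each fibre of $q_{Y}$ into a single fibre of $q_{X}$. If $y\notin\Fix(\tau)$ this is trivial, as $\Mm(\tau,u)_{y}=\{\theta_{y,0}\}$ by Lemma~\ref{lemma:fixed-point-char}. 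If $y\in\Fix(\tau)$, then $\varphi^{*}\big(\Theta_{y}(\dd_{r^{u}_{y}})\big)$ is a maximal analytic disc in $\Mm(\sigma,w)$, hence equals $\Theta_{x}(\dd_{r^{w}_{x}})$ for some $x\in\Fix(\sigma)$; taking weak$^{*}$-closures and using density gives $\varphi^{*}(\Mm(\tau,u)_{y})=\Mm(\sigma,w)_{x}$. Thus there is a well-defined $\gamma:Y\to X$ with $q_{X}\circ\varphi^{*}=\gamma\circ q_{Y}$, and since $q_{Y}$ is a quotient map and $q_{X}\circ\varphi^{*}$ is continuous, $\gamma$ is continuous. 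The same construction applied to $\varphi^{-1}$ gives a continuous $\delta:X\to Y$ with $\delta=\gamma^{-1}$ (because $(\varphi^{-1})^{*}=(\varphi^{*})^{-1}$), so $\gamma$ is a homeomorphism; moreover, since $\varphi^{*}$ preserves the property ``the fibre has non-empty interior'', $\gamma$ restricts to a bijection $\Fix(\tau)\to\Fix(\sigma)$.

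It remains to identify $\gamma$ with $\rho_{\varphi}^{*}$. As $\Tt_{+}(\sigma,w)$ is unital with unit $1\in C(X)$ and $\varphi$ is an algebra isomorphism, $\varphi(1)=1$ and $\rho_{\varphi}=\Phi_{0}\circ\varphi|_{C(X)}:C(X)\to C(Y)$ is unital (here $\Phi_0$ denotes the conditional expectation on $\Tt_+(\tau,u)$). The character $\theta_{y,0}$ equals $\delta_{y}\circ\Phi_{0}$ on $\Tt_{+}(\tau,u)$: by the product formula for Fourier series in Proposition~\ref{proposition:fourier-series} one has $\Phi_{0}(TT')=\Phi_{0}(T)\Phi_{0}(T')$, so $\delta_{y}\circ\Phi_{0}$ is a character vanishing on $\Ker\Phi_{0}$, and this characterizes $\theta_{y,0}$. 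Hence for $f\in C(X)$,
\[
\rho_{\varphi}^{*}(\delta_{y})(f)=\delta_{y}\big(\Phi_{0}(\varphi(f))\big)=\theta_{y,0}(\varphi(f))=\big(\varphi^{*}\theta_{y,0}\big)\big|_{C(X)}(f)=\delta_{\gamma(y)}(f),
\]
the last step since $\varphi^{*}\theta_{y,0}\in\Mm(\sigma,w)_{\gamma(y)}$. So $\rho_{\varphi}^{*}=\gamma$ as maps $Y\to X$; being dual to a homeomorphism, $\rho_{\varphi}$ is the $*$-isomorphism $f\mapsto f\circ\gamma$, and the symmetric computation for $\varphi^{-1}$ yields $\rho_{\varphi^{-1}}=\rho_{\varphi}^{-1}$. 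Thus $\varphi$ is base-detecting, and $\rho_{\varphi}^{*}=\gamma$ sends $\Fix(\tau)$ onto $\Fix(\sigma)$.

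The main obstacle I anticipate is the bookkeeping of the second paragraph: establishing that $\varphi^{*}$ respects the fibrations $q_{X}$ and $q_{Y}$ — for fixed-point fibres this combines the maximal-disc bijection with the density statement of Proposition~\ref{proposition:analytic-discs}, and for the remaining fibres it is Lemma~\ref{lemma:fixed-point-char} — and then deducing continuity of $\gamma$ from the quotient-map property of $q_{Y}$. Once the character-space computations of the previous section are available, the rest is formal.
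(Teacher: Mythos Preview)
Your proof is correct and follows essentially the same route as the paper: push $\varphi$ to the character spaces, use the maximal-analytic-disc structure from Proposition~\ref{proposition:analytic-discs} to see that $\varphi^{*}$ respects the fibration over the base, and then identify the induced map on the base with $\rho_{\varphi}^{*}$. The paper is terser---it writes the identification as $\gamma_{\varphi}=\iota^{*}\circ\varphi^{*}\circ\Phi_{0}^{*}=\rho_{\varphi}^{*}$ in one line, which simultaneously gives continuity and the $*$-isomorphism property via Gelfand duality---whereas you first build $\gamma$ through the quotient maps $q_{X},q_{Y}$ and then verify $\gamma=\rho_{\varphi}^{*}$ by the computation with $\theta_{y,0}=\delta_{y}\circ\Phi_{0}$; but these are the same argument unpacked to different depths.
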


\begin{proof}
Let $\varphi$ be as in the statement of the proposition. Then $\varphi$ induces a homeomorphism $\varphi^* : \Mm(\tau,u) \rightarrow \Mm(\sigma,w)$. It is easily verified that $\varphi^*$ sends maximal analytic discs to maximal analytic discs, since it preserves the lattice of inclusion of analytic discs. Hence we obtain a bijection between the maximal analytic discs of $\Mm(\tau,u)$ and $\Mm(\sigma,w)$ which extends to a bijection between closures of such analytic discs. That is, to every $y\in Y$ there is a unique $\gamma_{\varphi}(y) \in X$ such that $\varphi^*$ restricted to $\Mm(\tau,u)_y$ is a homeomorphism onto $\Mm(\sigma,w)_{\gamma_{\varphi}(y)}$, and furthermore, we must have that $\gamma_{\varphi^{-1}} = \gamma_{\varphi}^{-1}$ and that $\gamma_{\varphi}$ is a bijection between fixed points of $\tau$ and fixed points of $\sigma$. 

To show that $\varphi$ is base detecting, let $\iota_X : C(X) \rightarrow \Tt_+(\sigma,w)$ be the canonical inclusion. By noting that $\iota^* : \Mm(\sigma,w) \rightarrow X$ is the canonical quotient map sending every element in $\Mm(\sigma,w)_x$ to $\theta_{x,0}$ (which is identified with $x\in X$), that $\Phi_0^* : Y \rightarrow \Mm(\tau,u)$ is the map $\Phi_0^*(y) = \theta_{y,0}$ and that
$$
\gamma_{\varphi} = \iota^* \circ \varphi^* \circ \Phi_0^* = (\Phi_0 \circ \varphi \circ \iota)^* = \rho_{\varphi}^*
$$
we see that $\rho_{\varphi}$ is a *-isomorphism satisfying  $\rho^{-1}_{\varphi} = \rho_{\varphi^{-1}}$ by using the commutative Gelfand-Naimark functorial duality, with $\rho_{\varphi}^* = \gamma_{\varphi}$ inducing a bijection between the fixed points of $\tau$ and those of $\sigma$.
\end{proof}

Proposition \ref{proposition:base-detection} enables an important reduction of our isomorphisms problems. Indeed, if $(\sigma,w)$ and $(\tau,u)$ are WPS on $X$ and $Y$ respectively and $\varphi : \tensor(\sigma,w) \rightarrow \tensor(\tau,u)$ is a  bounded / isometric isomorphism. Let $\gamma = (\rho^{-1}_{\varphi})^* : X \rightarrow Y$ be the induced map on the base spaces. Then obviously the WPS $(\tau^{\gamma},u^{\gamma})$ is conjugate to $(\tau,u)$ via $\gamma^{-1}$, and so one can see that $(\sigma,w)$ is weighted-orbit / branch-transition conjugate to $(\tau,u)$ via $\gamma$ if and only if $(\sigma,w)$ is weighted-orbit / branch-transition conjugate to $(\tau^{\gamma},u^{\gamma})$ via $id_X$ respectively. Moreover, the conjugation between $(\tau^{\gamma},u^{\gamma})$ and $(\tau,u)$ promotes to a completely isometric graded isomorphism $\widetilde{\gamma} : \tensor(\tau^{\gamma},u^{\gamma}) \rightarrow \tensor (\tau,u)$ and so $\psi = \widetilde{\gamma}^{-1} \circ \varphi : \tensor(\sigma,w) \rightarrow \tensor(\tau^{\gamma},u^{\gamma})$ is a bounded / isometric isomorphism (resp. to what $\varphi$ is) where the WPS $(\sigma,w)$ and $(\tau^{\gamma},u^{\gamma})$ are on the \emph{same space} $X$ with $\rho^*_{\psi} = Id_X$. Our goal is then reduced to establishing weighted-orbit / branch-transition conjugation of $(\sigma,w)$ and $(\tau^{\gamma},u^{\gamma})$ via $Id_X$ from a bounded / isometric isomorphism $\psi : \tensor(\sigma,w) \rightarrow \tensor(\tau^{\gamma},u^{\gamma})$ respectively, with $\rho^*_{\psi} = Id_X$. This motivates the following definition

\begin{defi}
Let $(\sigma,w)$ and $(\tau,u)$ be partial systems on $X$. We say that an isomorphism $\varphi : \tensor(\sigma,w) \rightarrow \tensor(\tau,u)$ \emph{covers} $X$ if $\rho^*_{\varphi} = Id_X$ which is equivalent to having $\rho_{\varphi} = \Phi_0 \circ \varphi |_{C(X)} = Id_{C(X)}$.
\end{defi}

\subsection{Semi-gradedness}

Next, we characterize semi-graded isomorphisms between tensor algebras arising from WPS, in terms of the induced homeomorphism on the character spaces, and show how this can be used to produce a semi-graded isomorphism from a general one, for WPS comprised of strict contractions on compact perfect metric spaces.

If $(\sigma,w)$ and $(\tau,u)$ are WPS on $X$, and $\varphi: \Tt_+(\sigma,w) \rightarrow \Tt_+(\tau,u)$ is an algebraic isomorphism covering $X$, by Proposition \ref{proposition:base-detection} we must have that $f_x^{\varphi}(\cdot) := \varphi^*(\theta_{x, \cdot})(W_{(\sigma,w)}) : \Mm(\tau,u)_x \rightarrow \Mm(\sigma,w)_x$. If $x \notin \Fix(\tau) = \Fix(\sigma)$ we must have that $f_x^{\varphi}(\theta_{x,0}) = \theta_{x,0}$ since $\Mm(\sigma,w)_x = \{ \theta_{x,0} \}$.

Next, we note that if $x \in \Fix(\tau)$ is not an interior point of $\Fix(\tau)$ in $X$ then $f_x^{\varphi}(0) = 0$ due to continuity of $\varphi^*(\theta_{x,0})(W_{(\sigma,w)})$ in $x \in X$, and the fact that for points $x' \notin \Fix(\tau)$ we have $\varphi^*(\theta_{x',0}) = \theta_{x',0}$. Hence, the only "problematic" points are those in the interior of $\Fix(\tau)$. Thus we obtain the following characterization of semi-gradedness.

\begin{proposition} \label{proposition:semi-graded-character-space}
Let $(\sigma,w)$ and $(\tau,u)$ be WPS on compact $X$. A bounded isomorphism $\varphi : \Tt_+(\sigma,w) \rightarrow \Tt_+(\tau,u)$ covering $X$ is semi-graded if and only if $f_x^{\varphi}(0) = 0$ for all $x$ in the interior of $\Fix(\tau)$. In particular, if either $\Fix(\sigma)$ or $\Fix(\tau)$ have empty interior, then every isomorphism $\varphi$ is semi-graded.
\end{proposition}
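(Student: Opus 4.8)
The plan is to pull everything back to the criterion for semi-gradedness established in Proposition~\ref{proposition:criterion-semi-graded}. Since $\varphi$ covers $X$, Proposition~\ref{proposition:base-detection} shows it is base-detecting with $\rho_\varphi = Id_{C(X)}$, whence $\rho_\varphi^* = Id_X$ and in particular $\Fix(\sigma) = \Fix(\tau)$. Applying Proposition~\ref{proposition:criterion-semi-graded} with $E = C(\sigma,w)$ and $F = C(\tau,u)$, $\varphi$ is semi-graded if and only if $\md(\varphi(S^{(1)}_\xi)) \geq 1$ for every $\xi \in C(\sigma,w)$, i.e.\ if and only if $\Phi_0\big(\varphi(S^{(1)}_\xi)\big) = 0$ in $C(X)$ for every such $\xi$. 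Thus the task reduces to computing $\Phi_0(\varphi(S^{(1)}_\xi))$ pointwise on $X$ in terms of the data $f_x^\varphi$.

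For this, first I would note that $\delta_x \circ \Phi_0$ is a \emph{character} of $\Tt_+(\tau,u)$: the Fourier-product formula of Proposition~\ref{proposition:fourier-series} shows the degree-zero part of a product is the product of the degree-zero parts, so $\Phi_0$ is multiplicative; moreover $\delta_x \circ \Phi_0$ restricts to $\delta_x$ on $C(X)$ and annihilates $\Ker\Phi_0$, so it must equal $\theta_{x,0}$. Hence $\Phi_0(\varphi(S^{(1)}_\xi))(x) = \theta_{x,0}(\varphi(S^{(1)}_\xi)) = \varphi^*(\theta_{x,0})(S^{(1)}_\xi)$, where $\varphi^*(\theta_{x,0}) \in \Mm(\sigma,w)_x$. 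Lemma~\ref{lemma:fixed-point-char} now evaluates this: it is $0$ for every $\xi$ when $x \notin \Fix(\sigma)$, and equals $\xi(x,x)\,\varphi^*(\theta_{x,0})(W_{(\sigma,w)}) = \xi(x,x)\,f_x^\varphi(0)$ when $x \in \Fix(\sigma)$. Since $(x,x) \in Gr(\sigma)$ for every $x \in \Fix(\sigma)$, taking $\xi$ with $\xi(x,x) = 1$ we conclude that $\Phi_0(\varphi(S^{(1)}_\xi)) = 0$ for all $\xi$ if and only if $f_x^\varphi(0) = 0$ for every $x \in \Fix(\sigma) = \Fix(\tau)$.

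Finally I would shrink the test set from $\Fix(\tau)$ to its interior. The function $x \mapsto f_x^\varphi(0) = \varphi^*(\theta_{x,0})(W_{(\sigma,w)})$ is continuous on $X$, being the composition of the weak*-continuous map $x \mapsto \theta_{x,0}$, the homeomorphism $\varphi^*$, and evaluation at the fixed element $W_{(\sigma,w)}$. For $x \notin \Fix(\sigma)$ Lemma~\ref{lemma:fixed-point-char} gives $\Mm(\sigma,w)_x = \{\theta_{x,0}\}$, so $\varphi^*(\theta_{x,0}) = \theta_{x,0}$ and $f_x^\varphi(0) = \theta_{x,0}(W_{(\sigma,w)}) = 0$ because $W_{(\sigma,w)} \in \Ker\Phi_0$. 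By continuity the condition $f_x^\varphi(0) = 0$ then holds automatically on $\overline{X \setminus \Fix(\tau)}$, which is exactly the complement of the interior of $\Fix(\tau)$; hence ``$f_x^\varphi(0) = 0$ on all of $\Fix(\tau)$'' is equivalent to ``$f_x^\varphi(0) = 0$ on the interior of $\Fix(\tau)$'', proving the equivalence. The ``in particular'' clause is then immediate, since an empty interior makes the criterion vacuous.

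I do not expect a genuine obstacle: the substantive work — the disc description of $\Mm(\sigma,w)_x$ in Proposition~\ref{proposition:analytic-discs}, Lemma~\ref{lemma:fixed-point-char}, and the semi-gradedness criterion of Proposition~\ref{proposition:criterion-semi-graded} — is already in place, and what remains is the bookkeeping identification $\delta_x\circ\Phi_0 = \theta_{x,0}$ together with the short boundary continuity argument. The one place to stay careful is keeping straight which tensor algebra each character and each shift lives on while unwinding $\varphi^*$.
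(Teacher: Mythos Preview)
Your proposal is correct and follows essentially the same route as the paper: reduce semi-gradedness to $\Phi_0(\varphi(S^{(1)}_\xi))=0$ via Proposition~\ref{proposition:criterion-semi-graded}, evaluate this pointwise as $\varphi^*(\theta_{x,0})(S^{(1)}_\xi)$ using Lemma~\ref{lemma:fixed-point-char}, and then use the continuity of $x\mapsto f_x^\varphi(0)$ together with its vanishing off $\Fix(\tau)$ to shrink the test set to the interior. The only cosmetic difference is that the paper handles the forward implication by directly noting $\varphi(W_{(\sigma,w)})\in\Ker\Phi_0$, whereas you obtain both directions uniformly through the equivalence chain; and the paper places the boundary-continuity observation in the paragraph preceding the proposition rather than inside the proof.
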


\begin{proof}
If $\varphi$ is semi-graded, then $\varphi(W_{(\sigma,w)}) \in \Ker \Phi_0$ and so 
$$
f_x^{\varphi}(0) = \varphi^*(\theta_{x,0})(W_{(\sigma,w)}) = \theta_{x,0} (\varphi(W_{(\sigma,w)})) = \theta_{x,0} (\Phi_0(\varphi(W_{(\sigma,w)}))) = 0
$$
and so $f_x^{\varphi}(0) = 0$.

Conversely, if $f_x^{\varphi}(0) = 0$ for all $x \in X$ and $\varphi$ covers $X$, we have that $\varphi^*(\theta_{x,0}) = \theta_{x,0}$ for all $x\in X$, and by Proposition \ref{proposition:criterion-semi-graded} it suffices to show that for any $\xi \in C(\sigma,w)$ we have $\md(\varphi(S^{(1)}_{\xi})) \geq 1$. Indeed, write $\varphi(S^{(1)}_{\xi}) = h + T$ with $\md(T) \geq 1$ and $h\in C(X)$. Since for $x\in X$ we have that $h(x) = \theta_{x,0} (\varphi(S^{(1)}_{\xi})) = \theta_{x,0}(S^{(1)}_{\xi}) = 0$, we are done.
\end{proof}

As a corollary to the above, we show that every isomorphism is automatically semi-graded between tensor algebras arising from distributed iterated function systems and graph-directed systems as in Subsections \ref{subsec:dfs} and \ref{subsec:gds} respectively, when the spaces are with no isolated points.

\begin{corollary}
Let $(\sigma,w)$ and $(\tau,u)$ be $d$-variable and $d'$-variable WPS on a metric compact \emph{perfect} spaces $X$ and $Y$ respectively, such that either $\sigma$ or $\tau$ is comprised of strict contractions. If $\varphi : \tensor(\sigma,w) \rightarrow \tensor(\tau,u)$ is a bounded / isometric isomorphism, then it is automatically semi-graded.
\end{corollary}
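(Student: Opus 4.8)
The plan is to deduce this from the character-space criterion for semi-gradedness, Proposition~\ref{proposition:semi-graded-character-space}, which says that any bounded isomorphism covering $X$ between tensor algebras of WPS on a common base space $X$ is semi-graded as soon as $\Fix(\sigma)$ or $\Fix(\tau)$ has empty interior. So there are two things to carry out: (i) reduce the general isomorphism $\varphi:\tensor(\sigma,w)\to\tensor(\tau,u)$ to one covering a common base space, and (ii) show that under the strict-contraction hypothesis one of the relevant fixed-point sets has empty interior.

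For (ii) the key observation is elementary: if $\tau_i:Y_i\to Y$ is a strict contraction for the metric on $Y$, then $\tau_i$ has at most one fixed point in $Y_i$, since $\tau_i(y)=y$ and $\tau_i(y')=y'$ give $d(y,y')=d(\tau_i(y),\tau_i(y'))\le c_i\,d(y,y')$ with $c_i<1$, forcing $y=y'$. Hence $\Fix(\tau)=\bigcup_{i=1}^{d'}\{y\in Y_i:\tau_i(y)=y\}$ is a finite set of at most $d'$ points, and since $Y$ is perfect it has empty interior; the symmetric argument applies if instead $\sigma$ is comprised of strict contractions. Moreover, by Proposition~\ref{proposition:base-detection} the map $\rho_{\varphi}^{*}$ is a bijection carrying $\Fix(\tau)$ onto $\Fix(\sigma)$, so whichever of the two systems is assumed contractive, \emph{both} $\Fix(\sigma)$ and $\Fix(\tau)$ turn out finite.

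For (i) I would follow the reduction described after Proposition~\ref{proposition:base-detection}: put $\gamma=(\rho_{\varphi}^{-1})^{*}:X\to Y$, replace $(\tau,u)$ by the conjugate system $(\tau^{\gamma},u^{\gamma})$ on $X$, and factor $\varphi=\widetilde{\gamma}\circ\psi$ where $\widetilde{\gamma}:\tensor(\tau^{\gamma},u^{\gamma})\to\tensor(\tau,u)$ is a completely isometric graded isomorphism (Corollary~\ref{cor:branch-transition-char} and Theorem~\ref{theorem:isometric-graded}) and $\psi:\tensor(\sigma,w)\to\tensor(\tau^{\gamma},u^{\gamma})$ is a bounded (isometric, if $\varphi$ is) isomorphism covering $X$. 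Since $\tau^{\gamma}_i(x)=x$ exactly when $\gamma(x)\in\Fix(\tau_i)$, we get $\Fix(\tau^{\gamma})=\gamma^{-1}(\Fix(\tau))$, again a finite subset of the perfect space $X$, hence of empty interior. Proposition~\ref{proposition:semi-graded-character-space} then gives that $\psi$ is semi-graded, i.e.\ $\psi(\Ker\Phi_0)=\Ker\Phi_0$. Finally, a graded isomorphism respects the grading and commutes with $\Phi_0$ on Fourier series, so $\widetilde{\gamma}(\Ker\Phi_0)=\Ker\Phi_0$, and therefore $\varphi(\Ker\Phi_0)=\widetilde{\gamma}(\psi(\Ker\Phi_0))=\Ker\Phi_0$, so $\varphi$ is semi-graded.

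I do not expect a serious obstacle here: the strict-contraction hypothesis is used only through the finiteness of $\Fix$, the base-space reduction is compatible with passing to fixed-point sets because $\gamma$ is a homeomorphism (so $\gamma^{-1}$ of a finite set is finite), and the perfectness of $X$ is part of the hypotheses; the rest is a direct assembly of Propositions~\ref{proposition:base-detection} and~\ref{proposition:semi-graded-character-space} with Theorem~\ref{theorem:isometric-graded}. The one point worth stating carefully is that the criterion of Proposition~\ref{proposition:semi-graded-character-space} requires an isomorphism \emph{covering} the base, which is precisely why the reduction step (i) must precede the application.
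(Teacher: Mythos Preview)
Your argument is correct and follows essentially the same route as the paper: strict contractions have at most one fixed point each, so the fixed-point set is finite and hence has empty interior in a perfect space, after which Proposition~\ref{proposition:semi-graded-character-space} applies. You are in fact more careful than the paper's own proof, which invokes Proposition~\ref{proposition:semi-graded-character-space} directly without spelling out the reduction to an isomorphism covering a common base space; your step (i) and the observation that composing with the graded isomorphism $\widetilde{\gamma}$ preserves $\Ker\Phi_0$ make this explicit.
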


\begin{proof}
Without loss of generality, $\sigma$ is comprised of contractions. Let $r$ be the metric on $X$. Since for any two points $x,y\in X$ we must have $r(\sigma_i(x),\sigma_i(y)) < r(x,y)$ for all $i\in \{1,...,d\}$, we see that $\sigma_i$ can have at most one fixed point, and so $\Fix(\sigma)$ has at most $d$ points. Since $X$ is perfect, $\Fix(\sigma)$ must have empty interior, and so by Proposition \ref{proposition:semi-graded-character-space} $\varphi$ must be semi-graded.
\end{proof}

\section{Isomorphisms of tensor algebras arising from WPS} \label{sec:main-results}

In this section we adapt a new method in the analysis of character spaces due to Davidson, Ramsey and Shalit in \cite{DRS}, and use this to construct a bounded / isometric semi-graded isomorphism from any bounded / isometric isomorphism of our tensor algebras respectively. We then use this to provide two theorems that separately deal with classification up to bounded isomorphism and classification up to isometric isomorphism, which turn out to yield two distinct equivalences.

\subsection{Reduction to the semi-graded case}
Let $(\sigma,w)$ be a WPS on $X$. Recall the gauge group action $\alpha : \mathbb{T} \rightarrow Aut(\tensor(\sigma,w))$ uniquely determined on generators by $\alpha_{\lambda}(S_{\xi}^{(1)}) = \lambda S_{\xi}^{(1)}$ and $\alpha_{\lambda}(f) = f$ for $\xi \in C(\sigma,w)$ and $f\in C(X)$. Now, if $(\sigma,w)$ and $(\tau,u)$ are WPS on $X$, and $\varphi: \Tt_+(\sigma,w) \rightarrow \Tt_+(\tau,u)$ is an algebraic isomorphism covering $X$, by Proposition \ref{proposition:base-detection} we have that $\varphi^* |_{\Mm(\tau,u)_x} : \Mm(\tau,u)_x \rightarrow \Mm(\sigma,w)_x$. 

Next, if $x \in \Fix(\tau)$, by Proposition \ref{proposition:analytic-discs} we can identify $\varphi^* |_{\Mm(\tau,u)_x}$ with a bijective biholomorphism $f_x^{\varphi} := \Theta_x^{-1} \circ \varphi^* \circ \Theta_x: \dd_{r_x^u} \rightarrow \dd_{r_x^w}$ which then must be of the form given by $f_x^{\varphi}(z) = r_x^w \hat{f}_x^{\varphi}( (r_x^u)^{-1} z)$ where $\hat{f}_x^{\varphi}$ is a biholomorphism of the \emph{unit} disc $\dd$ given by
$$
\hat{f}_x^{\varphi}(z) = e^{i\theta_x}\frac{w_x - z}{1 - \overline{w_x}z}
$$
for some $\theta_x \in [0,2\pi]$ and $w_x \in \dd$. Note also that since $f_x^{\varphi}(0) = \varphi^*(\theta_{x, 0})(W_{(\sigma,w)}) = r_x^w e^{i\theta_x}w_x$, and since $\varphi^*(\theta_{x, 0})(W_{(\sigma,w)})$ depends continuously on $x\in X$, we can extend $f_x^{\varphi}$ continuously to be $0$ for $x \notin \Fix(\tau)$. Further, if $\psi : \tensor(\tau,u) \rightarrow \tensor(\pi,v)$ is another algebraic isomorphism covering $X$ we have that $\hat{f}_x^{\psi \circ \varphi} = \hat{f}_x^{\varphi} \circ \hat{f}_x^{\psi}$.

We now wish to examine an isomorphism $\varphi : \tensor(\sigma,w) \rightarrow \tensor(\tau,u)$ covering $X$ for which there exists $x\in X$ an interior point of $\Fix(\sigma) = \Fix(\tau)$, with $f_x^{\varphi}(0) \neq 0$.

Fix an element $x\in \Fix(\tau)$ with $f_x^{\varphi}(0) \neq 0$. One can then find $\lambda_x, \gamma_x \in \mathbb{T}$ such that the isomorphism $\psi = \varphi \circ \alpha_{\lambda_x} \circ \varphi^{-1} \circ \alpha_{\gamma_x}  \circ \varphi$ satisfies $f_x^{\psi}(0) = 0$. Indeed, for $\lambda \in \mathbb{T}$, since
$ \hat{f}_x^{\varphi \circ \alpha_{\lambda}}(0) = \lambda \cdot \hat{f}_x^{\varphi}(0)$, we get that
$C = \{ \hat{f}_x^{\varphi \circ \alpha_{\lambda}}(0) | \lambda \in \mathbb{T} \}$
is a circle centered around $0$. Since $\hat{f}_x^{\varphi^{-1}}$ is a Mobius map of the form described above, it must send $C$ to a circle \emph{through} the origin. That is,
$C' = \hat{f}_x^{\varphi^{-1}}(C) = \{\hat{f}_x^{\varphi \circ \alpha_{\lambda} \circ \varphi^{-1}}(0) | \lambda \in \mathbb{T} \}$ is a circle through the origin, since for $\lambda = 0$ we get $0 = \hat{f}_x^{Id}(0) = \hat{f}_x^{\varphi \circ \alpha_{\lambda} \circ \varphi^{-1}}(0) \in C'$. If again take arbitrary $\gamma \in \mathbb{T}$ and do this, we can "fill the circle". That is, since $\hat{f}_x^{\varphi^{-1}\circ \alpha_{\gamma}}(C) = \gamma \cdot f_x^{\varphi^{-1}}(C) = \gamma \cdot C'$, the region bounded by $C'$, which we denote by $ins(C')$, is a subset of $\{ \ \hat{f}_x^{\varphi \circ \alpha_{\lambda} \circ \varphi^{-1} \circ \alpha_{\gamma}}(0) \ | \ \lambda, \gamma \in \mathbb{T} \ \}$. Once more, since $\hat{f}_x^{\varphi}$ is the inverse of $\hat{f}_x^{\varphi^{-1}}$, being a Mobius map, it must send $C'$ back to $C$, and so it must send $ins(C')$ to $ins(C)$. Thus we obtain that the set
$$
\{ \ \hat{f}_x^{\varphi \circ \alpha_{\lambda} \circ \varphi^{-1} \circ \alpha_{\gamma}  \circ \varphi}(0) \ | \ \lambda, \gamma \in \mathbb{T} \ \}
$$
contains the origin, and hence there is some choice of $\lambda_x$ and $\gamma_x$ with which 
$$
\hat{f}_x^{\varphi \circ \alpha_{\lambda_x} \circ \varphi^{-1} \circ \alpha_{\gamma_x}  \circ \varphi}(0) = 0
$$
We now wish to show that a choice of \emph{continuous} functions $x \mapsto \lambda_x$ and $x \mapsto \gamma_x$ from $X$ to $\mathbb{T}$ can be found such that $f_x^{\varphi \circ \alpha_{\lambda_x} \circ \varphi^{-1} \circ \alpha_{\gamma_x}  \circ \varphi}(0) = 0$ for all $x\in D \subset \Fix(\tau)$ where $D$ is a closed set for which $x \mapsto |w_x|^2$ is continuous on $D$.

\begin{proposition} \label{proposition:continuous-extensions}
Let $(\sigma,w)$ and $(\tau,u)$ be WPS on compact $X$, and let $\varphi:\tensor(\sigma,w) \rightarrow \tensor(\tau,u)$ be an algebraic isomorphism covering $X$. If $D \subset \Fix(\tau)$ is a closed on which $x \mapsto |w_x|^2$ is continuous and \emph{non-zero}, there exist continuous functions $\lambda , \gamma : X \rightarrow \mathbb{T}$ such that for all $x \in D$ we have
\begin{equation} \label{eq:uniqueness-prob}
(\hat{f}^{\varphi}_x \circ \hat{f}^{\alpha_{\gamma_x}}_x \circ \hat{f}^{\varphi^{-1}}_x \circ \hat{f}^{\alpha_{\lambda_x}}_x \circ \hat{f}^{\varphi}_x)(0) = 0
\end{equation}
\end{proposition}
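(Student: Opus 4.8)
The plan is to unwind the five-fold composition in \eqref{eq:uniqueness-prob} into a single scalar equation relating $\lambda_x$ and $\gamma_x$, to observe that this equation depends on the Möbius data of $\hat f_x^\varphi$ \emph{only} through the quantity $|w_x|^2$, to solve it by an explicit formula, and then to push the resulting continuous $\mathbb{T}$-valued functions off $D$ to all of $X$ by a Tietze argument.

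First I would carry out the reduction. Write $g_x=\hat f_x^\varphi$, so that $g_x(z)=e^{i\theta_x}\frac{w_x-z}{1-\overline{w_x}z}$ is a Möbius automorphism of $\mathbb{D}$ with $g_x(0)=e^{i\theta_x}w_x$ and $g_x(w_x)=0$; from the composition law $\hat f_x^{\psi\circ\varphi}=\hat f_x^\varphi\circ\hat f_x^\psi$ (applied to $\psi=\varphi^{-1}$) we have $\hat f_x^{\varphi^{-1}}=g_x^{-1}$, and since $\alpha_\mu$ fixes $C(X)$ and scales $W_{(\sigma,w)}$ by $\mu$ we have $\hat f_x^{\alpha_\mu}(z)=\mu z$. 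Evaluating the maps in \eqref{eq:uniqueness-prob} from the inside out, its left-hand side is $g_x\big(\gamma_x\,g_x^{-1}(\lambda_x\,g_x(0))\big)$, and since $g_x$ is injective with unique zero $w_x$ this vanishes iff $g_x^{-1}(\lambda_x g_x(0))=\gamma_x^{-1}w_x$, i.e.\ (applying $g_x$ and simplifying $g_x(\gamma_x^{-1}w_x)$) iff
\[
\lambda_x\,e^{i\theta_x}w_x\;=\;e^{i\theta_x}w_x\cdot\frac{1-\gamma_x^{-1}}{1-\gamma_x^{-1}|w_x|^2}.
\]
On $D$ we have $0<|w_x|<1$, so $e^{i\theta_x}w_x\neq 0$ and \eqref{eq:uniqueness-prob} is equivalent to the scalar identity $\lambda_x=\dfrac{1-\gamma_x^{-1}}{1-\gamma_x^{-1}|w_x|^2}$.

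Next I would solve this identity. For $\gamma_x^{-1}=e^{-i\phi}$ a direct computation gives
\[
\big|1-e^{-i\phi}\big|^2-\big|1-|w_x|^2e^{-i\phi}\big|^2=(1-|w_x|^2)\big(1+|w_x|^2-2\cos\phi\big),
\]
so the right-hand side of the scalar identity has modulus $1$ exactly when $\cos\phi=\tfrac12(1+|w_x|^2)$. Since $0<|w_x|^2<1$ on $D$ this value lies in $(\tfrac12,1)$, so I set $\phi_x=\arccos\!\big(\tfrac12(1+|w_x|^2)\big)\in(0,\tfrac{\pi}{3})$, $\gamma_x=e^{i\phi_x}$, and $\lambda_x=\dfrac{1-\gamma_x^{-1}}{1-\gamma_x^{-1}|w_x|^2}$; then $|\lambda_x|=1$ and \eqref{eq:uniqueness-prob} holds for every $x\in D$. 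Moreover $1-\gamma_x^{-1}$ and $1-\gamma_x^{-1}|w_x|^2$ both lie in the open first quadrant, so $\lambda_x$ has strictly positive real part, and the same is visibly true of $\gamma_x=e^{i\phi_x}$.

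Finally, for continuity: by hypothesis $x\mapsto|w_x|^2$ is continuous on $D$ with values in $(0,1)$, so $\phi_x$ is continuous there ($\arccos$ being continuous on $(-1,1)$), hence so is $\gamma_x$, and then so is $\lambda_x$ (the denominator $1-\gamma_x^{-1}|w_x|^2$ never vanishes since $|w_x|^2<1$). Both $x\mapsto\gamma_x$ and $x\mapsto\lambda_x$ take values in the open half-circle $\{z\in\mathbb{T}:\operatorname{Re}z>0\}$, which is homeomorphic to an open interval; composing with such a homeomorphism, extending the resulting real-valued continuous functions from the closed set $D$ to all of the (normal) space $X$ by the Tietze extension theorem, and composing back yields continuous $\lambda,\gamma:X\to\mathbb{T}$ that still satisfy the scalar identity on $D$, hence \eqref{eq:uniqueness-prob} for all $x\in D$. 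I expect the only real subtlety to be the first step --- recognizing that the composition in \eqref{eq:uniqueness-prob} collapses to an equation involving $\hat f_x^\varphi$ solely through $|w_x|^2$; once that is in hand the solution formula and the Tietze extension are routine.
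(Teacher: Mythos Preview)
Your proof is correct and follows essentially the same route as the paper: you reduce \eqref{eq:uniqueness-prob} to the scalar identity $\lambda_x=\dfrac{1-\overline{\gamma_x}}{1-\overline{\gamma_x}|w_x|^2}$, observe this depends only on $|w_x|^2$, and solve it by choosing $\gamma_x\in\mathbb{T}$ with $\operatorname{Re}\gamma_x=\tfrac12(1+|w_x|^2)$ --- your $\gamma_x=e^{i\arccos(\frac12(1+|w_x|^2))}$ is literally the paper's $\gamma_x=\big(\tfrac{1+h(x)}{2},\sqrt{1-(\tfrac{1+h(x)}{2})^2}\big)$ written in polar rather than Cartesian form.

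The only difference is in how the extension to all of $X$ is arranged: the paper first extends the scalar function $x\mapsto|w_x|^2$ from $D$ to a continuous $h:X\to[0,1)$ (which is immediate since $\sup_{x\in D}|w_x|^2<1$ by compactness), and then defines $\gamma_x,\lambda_x$ globally by the same formulas using $h$ in place of $|w_x|^2$; you instead define $\gamma_x,\lambda_x$ on $D$ and then extend the $\mathbb{T}$-valued maps themselves via the half-circle identification and Tietze. Both are valid; the paper's order is a touch cleaner because extending a single $[0,1)$-valued function avoids the half-circle detour, but there is no substantive difference in the argument.
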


\begin{proof}

First note that since the map $x\mapsto |w_x|^2$ is continuous on $D$, and since $|w_x|^2 < 1$ for all $x\in D$, we may extend it to a continuous function $h: X \rightarrow [0,1]$ so that $\|h\|_{\infty} < 1$ still. Next, we simplify equation \ref{eq:uniqueness-prob} to the following equivalent form.
$$
(\hat{f}^{\alpha_{\lambda_x}}_x \circ \hat{f}^{\varphi}_x)(0) = (\hat{f}^{\varphi}_x \circ \hat{f}^{\alpha_{\overline{\gamma_x}}}_x \circ \hat{f}^{\varphi^{-1}}_x)(0)
$$
Which is equivalent to have for all $x\in D$ that
\begin{equation} \label{eq:equiv-form}
\lambda_x e^{i\theta_x}w_x = \hat{f}^{\varphi}_x(\overline{\gamma_x}w_x) = e^{i\theta_x}\frac{w_x - \overline{\gamma_x}w_x}{1-\overline{\gamma_x}|w_x|^2}
\end{equation}
It then suffices to find continuous functions $\gamma, \lambda : X \rightarrow \mathbb{T}$ such that for any $x\in D$,

\begin{equation} \label{eq:lambda-gamma-connection}
\lambda_x = \frac{1-\overline{\gamma_x}}{1-\overline{\gamma_x}h(x)} = \frac{\gamma_x - 1}{\gamma_x - h(x)}
\end{equation}
as multiplying both sides by $e^{i\theta_x}w_x$ yields equation \ref{eq:equiv-form} for all $x\in D$.

Since $h(x)<1$ for all $x\in X$, we see that $\gamma_x - h(x) \neq 0$ for all $x\in X$, so we may define
$$
\gamma_x = \Big(\frac{1 + h(x)}{2}, \sqrt{1 - \big(\frac{1 + h(x)}{2} \big)^2} \Big)
 \ \ \text{and} \ \
\lambda_x = \frac{\gamma_x - 1}{\gamma_x - h(x)}
$$
As $|\gamma_x - 1| = |\gamma_x - h(x)|$ for all $x\in X$, we see that $\gamma$ and $\lambda$ are well-defined continuous functions from $X$ into $\mathbb{T}$ satisfying equation \ref{eq:lambda-gamma-connection}, and we are done.
\end{proof}

Finally, we are at the point where we can prove the main reduction of this paper, that reduces general isomorphism problems to corresponding semi-graded isomorphism problems. Recall that for a $d'$-variable WPS $(\tau,u)$ on $X$ and an index set $I \subset \{1,...,d'\}$ we defined the coinciding set of $I$ to be
$$
C(I) = \{ \ x\in \cap_{i\in I}X_i \ | \ \tau_i(x) = \tau_j(x) \ \}
$$
where for each $1 \leq i \leq d'$ we have $\tau_i : X_i \rightarrow X$.

\begin{theorem} \label{theorem:from-general-to-semi-graded}
Let $(\sigma,w)$ and $(\tau,u)$ be $d$-variable and $d'$-variable WPS respectively, on the same compact space $X$, and let $\varphi: \tensor(\sigma,w) \rightarrow \tensor(\tau,u)$ be a bounded / isometric isomorphism covering $X$. Then there exists a semi-graded bounded / isometric isomorphism $\psi :\tensor(\sigma,w) \rightarrow \tensor(\tau,u)$ covering $X$ respectively.
\end{theorem}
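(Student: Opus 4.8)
The plan is to conjugate $\varphi$ by carefully chosen automorphisms so that the resulting isomorphism $\psi$ satisfies $f^{\psi}_x(0)=0$ for every $x$ in the interior of $\Fix(\tau)$, which by Proposition \ref{proposition:semi-graded-character-space} is precisely what it means for $\psi$ to be semi-graded. If $\varphi$ is already semi-graded we are done, so assume otherwise. Since $\varphi$ covers $X$, Proposition \ref{proposition:base-detection} gives $\Fix(\sigma)=\Fix(\tau)$, and for each such fixed point $x$ the restriction $\varphi^{*}|_{\Mm(\tau,u)_x}$ is encoded, as in the discussion preceding this theorem, by the disc automorphism $\hat f^{\varphi}_x(z)=e^{i\theta_x}\frac{w_x-z}{1-\overline{w_x}z}$, and the obstruction to semi-gradedness lives at the points $x$ with $w_x\neq 0$.

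The first ingredient replaces the scalar gauge maps $\alpha_{\lambda_x}$ from that discussion by automorphisms whose ``parameter'' varies continuously over $X$. Given a continuous $\lambda:X\to\mathbb{T}$, I would define $V_{\lambda}:C(\sigma,w)\to C(\sigma,w)$ by $(V_{\lambda}\xi)(e)=\lambda(s(e))\xi(e)$; one checks directly that $V_{\lambda}$ is an $Id_{C(X)}$-bimodule map, that it is isometric because $|\lambda(x)|=1$ forces $\la V_{\lambda}\xi,V_{\lambda}\eta\ra_{w}=\la\xi,\eta\ra_{w}$, and that it is invertible with inverse $V_{\overline{\lambda}}$, so $V_{\lambda}$ is an $Id_{C(X)}$-unitary. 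By Theorem \ref{theorem:isometric-graded} it yields a graded completely isometric automorphism $\beta_{\lambda}:=\Ad_{V_{\lambda}}$ of $\tensor(\sigma,w)$ fixing $C(X)$ pointwise, and a short computation using Lemma \ref{lemma:fixed-point-char} and Proposition \ref{proposition:analytic-discs} shows $\hat f^{\beta_{\lambda}}_x(z)=\lambda(x)z$ for every $x\in\Fix(\sigma)$, that is, $\beta_{\lambda}$ acts on the $x$-disc exactly as the scalar gauge $\alpha_{\lambda(x)}$ does. The same construction applies to $\tensor(\tau,u)$.

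The second and main ingredient is that $x\mapsto|w_x|^{2}$ is continuous on $\Fix(\tau)$. Since $\inf_{x\in\Fix(\tau)}u(x,x)>0$, expanding $\varphi(W_{(\sigma,w)})$ in its Fourier series and using the uniform estimates in the proof of Proposition \ref{proposition:analytic-discs} shows that $(x,z)\mapsto f^{\varphi}_x(z)$ is jointly continuous on $\Fix(\tau)\times\overline{\dd}_s$ for a small $s>0$; hence by Cauchy's formula $x\mapsto(f^{\varphi}_x)'(0)$ is continuous on $\Fix(\tau)$, and similarly for $\varphi^{-1}$. Writing $f^{\varphi}_x(z)=r^{w}_x\hat f^{\varphi}_x((r^{u}_x)^{-1}z)$ and using that a disc automorphism sending $w_x$ to $0$ has derivative of modulus $1-|w_x|^{2}$ at $0$ (and that $\hat f^{\varphi^{-1}}_x=(\hat f^{\varphi}_x)^{-1}$ sends $0$ to $w_x$), one obtains the identity $|(f^{\varphi}_x)'(0)|\cdot|(f^{\varphi^{-1}}_x)'(0)|=(1-|w_x|^{2})^{2}$, in which the ratios $r^{w}_x/r^{u}_x$ cancel. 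Therefore $|w_x|^{2}=1-\sqrt{\,|(f^{\varphi}_x)'(0)|\cdot|(f^{\varphi^{-1}}_x)'(0)|\,}$ is continuous on $\Fix(\tau)$.

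To finish, apply Proposition \ref{proposition:continuous-extensions} with $D=\Fix(\tau)$: the non-vanishing hypothesis there is not really needed, since where $w_x=0$ equation \ref{eq:uniqueness-prob} holds trivially and where $w_x\neq 0$ the explicit $\lambda_x,\gamma_x$ built in that proof still work; this produces continuous $\lambda,\gamma:X\to\mathbb{T}$ with equation \ref{eq:uniqueness-prob} holding for all $x\in\Fix(\tau)$. Set $\psi:=\varphi\circ\beta_{\lambda}\circ\varphi^{-1}\circ\beta_{\gamma}\circ\varphi$, where $\beta_{\lambda}$ is the automorphism of $\tensor(\sigma,w)$ and $\beta_{\gamma}$ that of $\tensor(\tau,u)$. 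Each of the five factors is a bounded (resp.\ isometric) isomorphism covering $X$ (the $\beta$'s because they fix $C(X)$), and covering $X$ is closed under composition, so $\psi$ is too; and since $\hat f^{\beta_{\lambda}}_x=\hat f^{\alpha_{\lambda(x)}}_x$ and $\hat f^{\beta_{\gamma}}_x=\hat f^{\alpha_{\gamma(x)}}_x$, the cocycle relation $\hat f^{\psi_2\circ\psi_1}_x=\hat f^{\psi_1}_x\circ\hat f^{\psi_2}_x$ identifies $\hat f^{\psi}_x(0)$ with the left-hand side of equation \ref{eq:uniqueness-prob}, so $\hat f^{\psi}_x(0)=0$ and thus $f^{\psi}_x(0)=r^{w}_x\hat f^{\psi}_x(0)=0$ for all $x\in\Fix(\tau)$, in particular on its interior. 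Proposition \ref{proposition:semi-graded-character-space} then gives that $\psi$ is semi-graded. The only real difficulty is the continuity of $x\mapsto|w_x|^{2}$ on $\Fix(\tau)$; the rest is bookkeeping with the character-space machinery already developed.
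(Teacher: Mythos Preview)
Your proof is correct and takes a genuinely different route from the paper's. The paper proceeds by downward induction on $k$: assuming $f^{\psi}_x(0)=0$ holds on $\Fix(\tau)\cap\bigcup_{|I|\geq k+1}C(I)$, it argues that $x\mapsto|w_x|^2$ is continuous on the stratum $D_k=\Fix(\tau)\cap\bigcup_{|I|=k}C(I)$ (using that the possibly discontinuous factor $r_x^w=\sqrt{w(x,x)}$ can only jump at points lying in deeper strata, where $w_x$ has already been forced to vanish), applies Proposition~\ref{proposition:continuous-extensions} on $D_k$, and modifies $\psi$ to push the vanishing set up one level. This takes up to $d'$ iterations of the $\varphi\circ\beta\circ\varphi^{-1}\circ\beta\circ\varphi$ construction.

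You avoid the stratification entirely via the derivative identity $|(f^\varphi_x)'(0)|\cdot|(f^{\varphi^{-1}}_x)'(0)|=(1-|w_x|^2)^2$: the Schwarz--Pick relation $|\phi'(0)|=1-|\phi(0)|^2$ for a disc automorphism $\phi$ gives $|(\hat f^\varphi_x)'(0)|=|(\hat f^{\varphi^{-1}}_x)'(0)|=1-|w_x|^2$, and the rescaling factors $r^w_x/r^u_x$ and $r^u_x/r^w_x$ cancel in the product. In fact the Cauchy-integral step is not even needed, since the power series for $\theta_{x,z}$ gives $(f^\varphi_x)'(0)=\xi_1(x,x)$ with $\xi_1=\Phi_1(\varphi(W_{(\sigma,w)}))\in C(Gr(\tau))$, which is visibly continuous on $\Fix(\tau)$; likewise for $\varphi^{-1}$. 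Hence $|w_x|^2$ is continuous on all of $\Fix(\tau)$ at once, and a single modification suffices. Your observation that the non-vanishing hypothesis in Proposition~\ref{proposition:continuous-extensions} is superfluous is also correct: equation~(\ref{eq:equiv-form}) holds trivially when $w_x=0$, and the explicit formulas for $\gamma_x,\lambda_x$ built there are perfectly well-defined at such points. Your argument is shorter and sidesteps the bookkeeping with coinciding sets; the paper's version has the mild advantage of working only with the value $f^\varphi_x(0)$ rather than derivatives, at the cost of the layered induction.
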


\begin{proof}
Suppose $k \geq 0$ for which there exists $\psi$ such that $f_x^{\psi}(0) = 0$ for all $x\in \Fix(\tau) \cap \bigcup_{|I| \geq k+1}C(I)$ where we range over all subsets $I\subset \{1,...,d'\}$ of size at least $k+1$. Our assumptions guarantee that such a $k$ exists and that $k \leq d'$, since $\varphi$ certainly satisfies $f_x^{\varphi}(0) = 0$ for all $x\in \Fix(\tau) \cap \bigcup_{|I| \geq d'+1}C(I) = \emptyset$.

If there exists $\psi$ for which we can take $k=0$, then $f_x^{\psi}(0) = 0$ for all $x\in \Fix(\tau) \cap \bigcup_{I \subset \{1,...,d'\}}C(I) = \Fix(\tau)$, then $\psi$ is semi-graded by Proposition \ref{proposition:semi-graded-character-space}, and we will be done.

Thus, suppose $\varphi$ a bounded / isometric isomorphism and $k > 0$ such that $f_x^{\varphi}(0) = 0$ for all $x\in \Fix(\tau) \cap \bigcup_{|I| \geq k+1}C(I)$ but $f_x^{\varphi}(0) \neq 0$ for some $x\in \Fix(\tau) \cap \bigcup_{|I| \geq k}C(I)$. We will construct $\psi_k$ for which $f_x^{\psi_k}(0) = 0$ for all $x\in \Fix(\tau) \cap \bigcup_{|I| \geq k}C(I)$, so that for $\psi_k$ there is a smaller $k' <k$ for which $f_x^{\psi_k}(0) = 0$ for all $x\in \Fix(\tau) \cap \bigcup_{|I| \geq k'}C(I)$. By successive iterations of this procedure we keep decreasing $k$, so that we would eventually get $\psi$ for which we can take $k=0$, and be done by the previous paragraph.

We claim that under our current assumptions on $\varphi$, on the closed set $D_k = \Fix(\tau) \cap \bigcup_{|I| = k}C(I)$ we have that $x \mapsto |w_x|^2$ is continuous. We know that $\varphi^*(\theta_{x, 0})(W_{(\sigma,w)}) =  f_x^{\varphi}(0) = r_x^w e^{i\theta_x}w_x$ depends on $x \in X$ continuously, so we restrict it to $D_k$. By Proposition \ref{proposition:analytic-discs} we have that $r_x^w = \sqrt{w(x,x)} = \sqrt{\sum_{i : \sigma_i(x) = x \in X_i}w_i(x)}$ and as a function of $x$ is bounded below on $\Fix(\tau) = \Fix(\sigma) \supset D_k$, and is hence non-zero on $D_k$. Moreover, the only discontinuities $x \mapsto r_x^w$ can have on $D_k$ are those arising from branching points $B(J) \cap \Fix(\tau)$ in $D_k$ for subsets $J \supsetneq I$ and $|I| \geq k$, and so $x\mapsto |w_x|^2 = \frac{f_x^{\varphi}(0)^2}{(r_x^w)^2}$ is continuous at every $x\in D_k$ which is not a point in $B(J) \cap \Fix(\tau)$ for some $J\supsetneq I$ and $|I| \geq k$.

Next, for a point $y\in B(J) \cap \Fix(\tau)$ inside $D_k$ for some $J \supsetneq I$ and $|I| \geq k$, our assumptions guarantee that $0 = f_x^{\varphi}(0) = r_x^w e^{i\theta_x}w_x$ for all $x\in C(J)$ since $|J| > |I| \geq k$, so that $|w_y|^2 = 0$, since $x \mapsto r_x^w$ is non-zero for all $x \in D_k$. 

Now, since $x \mapsto r_x^w$ is bounded below on $D_k$, say by $\epsilon$, we have that $|f_x^{\varphi}(0)|^2 \geq \epsilon^2 |w_x|^2$, and by continuity of $x \mapsto f_x^{\varphi}(0)$ at $y$, we see that $|w_x|^2 \rightarrow 0$ as $x \rightarrow y$. This means that $x \mapsto |w_x|^2$ is continuous at $y$ inside $D_k$, so that $x \mapsto |w_x|^2$ is continuous on all of $D_k$.

Using Proposition \ref{proposition:continuous-extensions} we have two continuous maps $L: x \mapsto \lambda_x$ and $G: x\mapsto \gamma_x$ from $X$ to $\mathbb{T}$ that satisfy equation \ref{eq:uniqueness-prob} for any $x\in D_k$.
Define two unitaries $U_L$ on $C(\sigma,w)$ and $U_G$ on $C(\tau,u)$ given by $U_L(\xi) = L \cdot \xi$ and $U_G(\eta) = G \cdot \eta$ for $\xi \in C(\sigma,w)$ and $\eta \in C(\tau,u)$ using the left action by continuous functions. Next, use Theorem \ref{theorem:isometric-graded} to promote $U_L$ and $U_G$ to (completely) isometric graded automorphisms $Ad_{U_L} : \tensor(\sigma,w) \rightarrow \tensor(\sigma,w)$ and $Ad_{U_G} : \tensor(\tau,u) \rightarrow \tensor(\tau,u)$ such that for any point $x\in D_k$ we have
$$
\hat{f}_x^{Ad_{U_L}}(z) = \lambda_x z  \ \ \text{and} \ \ \hat{f}_x^{Ad_{U_G}}(z) = \gamma_x z
$$
Thus, we get for all $x\in D_k$ that $\hat{f}_x^{Ad_{U_L}}(z) = \hat{f}_x^{\alpha_{\lambda_x}}(z)$ and $\hat{f}_x^{Ad_{U_G}}(z) = \hat{f}_x^{\alpha_{\gamma_x}}(z)$.
Next, we define $\psi_k = \varphi \circ Ad_{U_L} \circ \varphi^{-1} \circ Ad_{U_G} \circ \varphi : \tensor(\sigma,w) \rightarrow \tensor(\tau,u)$, and since for every $x\in D_k$,
$$
\hat{f}_x^{\psi_k}(0) = \hat{f}_x^{\varphi \circ Ad_{U_L} \circ \varphi^{-1} \circ Ad_{U_G}  \circ \varphi}(0) = (\hat{f}_x^{\varphi} \circ \hat{f}_x^{Ad_{U_G}} \circ \hat{f}_x^{\varphi^{-1}} \circ \hat{f}_x^{Ad_{U_G}} \circ \hat{f}_x^{\varphi}) (0) =
$$
$$
(\hat{f}_x^{\varphi} \circ \hat{f}_x^{\alpha_{\gamma_x}} \circ \hat{f}_x^{\varphi^{-1}} \circ \hat{f}_x^{\alpha_{\lambda_x}} \circ \hat{f}_x^{\varphi}) (0) = 0
$$
we obtain that $\psi_k$ is a bounded / isometric isomorphism (respectively to what $\varphi$ is) such that $f_x^{\psi_k}(0) = 0$ for all $x\in D_k = \Fix(\tau) \cap \bigcup_{|I| \geq k}C(I)$, and we have managed to find $\psi_k$ for which we can take $k' < k$ such that $f_x^{\psi_k}(0) = 0$ for all $x\in \Fix(\tau) \cap \bigcup_{|I| \geq k'}C(I)$.
\end{proof}

\subsection{Main results}
The following are our main theorems that resolve algebraic / bounded / isometric isomorphism problems, and classify tensor algebras arising from WPS up to bounded / isometric isomorphisms.

\begin{theorem}[Algebraic / Bounded isomorphisms] \label{theorem:alg-bdd-main}
Let $(\sigma,w)$ and $(\tau,u)$ be WPS over $X$ and $Y$ respectively. The following are equivalent
\begin{enumerate}
\item
$(\sigma,w)$ and $(\tau,u)$ are weighted-orbit conjugate.
\item
$C(\sigma,w)$ and $C(\tau,u)$ are similar.
\item
There exists a graded completely bounded isomorphism $\varphi : \tensor(\sigma,w) \rightarrow \tensor(\tau,u)$.
\item
There exists a bounded isomorphism $\varphi : \tensor(\sigma,w) \rightarrow \tensor(\tau,u)$.
\end{enumerate}
Moreover, if either $\sigma$ or $\tau$ are well-supported, the above is equivalent to the existence of an algebraic isomorphism $\varphi : \tensor(\sigma,w) \rightarrow \tensor(\tau,u)$.
\end{theorem}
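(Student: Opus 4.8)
The plan is to prove the cycle $(1)\Rightarrow(2)\Rightarrow(3)\Rightarrow(4)\Rightarrow(3)$ together with $(3)\Rightarrow(2)\Rightarrow(1)$, so that all four statements become equivalent, and then to deduce the algebraic refinement from automatic continuity. The equivalence $(1)\Leftrightarrow(2)$ is exactly Proposition \ref{prop:weighted-orbit-char}: weighted-orbit conjugacy of $(\sigma,w)$ and $(\tau,u)$ via a homeomorphism $\gamma$ produces a $\rho$-similarity $V\colon C(\sigma,w)\to C(\tau,u)$ for $\rho(f)=f\circ\gamma^{-1}$, and conversely any $\rho$-similarity of the correspondences gives weighted-orbit conjugacy via the associated $\gamma$. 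The equivalence $(2)\Leftrightarrow(3)$ is Theorem \ref{theorem:bounded-graded}, applied to the commutative C*-algebras $C(X)$ and $C(Y)$: a $\rho$-similarity $V$ promotes to the graded completely bounded isomorphism $\Ad_V$, and conversely a bounded graded isomorphism $\varphi$ restricts on the degree-one level to a $\rho_\varphi$-similarity $V_\varphi$. Finally $(3)\Rightarrow(4)$ is trivial.

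The substance of the argument lies in $(4)\Rightarrow(3)$. Starting from a bounded isomorphism $\varphi\colon\tensor(\sigma,w)\to\tensor(\tau,u)$, Proposition \ref{proposition:base-detection} shows that $\varphi$ is base-detecting, so $\rho_\varphi\colon C(X)\to C(Y)$ is a $*$-isomorphism; set $\gamma=(\rho_\varphi^{-1})^*\colon X\to Y$. As in the discussion following Proposition \ref{proposition:base-detection}, $(\tau^\gamma,u^\gamma)$ is conjugate to $(\tau,u)$, and this conjugacy promotes to a graded completely isometric isomorphism $\widetilde{\gamma}\colon\tensor(\tau^\gamma,u^\gamma)\to\tensor(\tau,u)$. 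Hence $\psi=\widetilde{\gamma}^{-1}\circ\varphi\colon\tensor(\sigma,w)\to\tensor(\tau^\gamma,u^\gamma)$ is a bounded isomorphism between tensor algebras over the \emph{same} compact space $X$ which covers $X$, i.e. $\rho_\psi^*=Id_X$. Now Theorem \ref{theorem:from-general-to-semi-graded} applies to $\psi$ and yields a semi-graded bounded isomorphism $\psi'\colon\tensor(\sigma,w)\to\tensor(\tau^\gamma,u^\gamma)$ covering $X$, and Proposition \ref{proposition:semi-graded-to-graded} then promotes $\psi'$ to a graded completely bounded isomorphism $\widetilde{\psi'}$. Composing back, $\widetilde{\gamma}\circ\widetilde{\psi'}\colon\tensor(\sigma,w)\to\tensor(\tau,u)$ is graded, since $\widetilde{\gamma}$ is graded, and completely bounded; this establishes $(3)$ and closes the cycle.

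For the algebraic refinement, assume either $\sigma$ or $\tau$ is well-supported. Then any algebraic isomorphism $\tensor(\sigma,w)\to\tensor(\tau,u)$ is automatically bounded by Theorem \ref{theorem:automatic-continuity}, so its existence implies $(4)$; conversely $(3)$ or $(4)$ trivially produces an algebraic isomorphism. Thus all five conditions are equivalent in this case.

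The main obstacle is entirely inside the $(4)\Rightarrow(3)$ step, and it is organizational: one must first use base-detection to replace $\varphi$ by an isomorphism covering a single base space before Theorem \ref{theorem:from-general-to-semi-graded} can be invoked, and then keep track that gradedness and complete boundedness survive the final composition with $\widetilde{\gamma}$. The genuinely delicate analytic content — removing the ``problematic'' values $f^\varphi_x(0)$ at interior fixed points by twisting with the gauge action and the Davidson--Ramsey--Shalit circle-filling argument — has already been isolated in Theorem \ref{theorem:from-general-to-semi-graded}, so in this proof it only needs to be cited.
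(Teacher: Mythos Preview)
Your proof is correct and follows the same route as the paper's: $(1)\Leftrightarrow(2)$ via Proposition \ref{prop:weighted-orbit-char}, $(2)\Leftrightarrow(3)$ via Theorem \ref{theorem:bounded-graded}, $(4)\Rightarrow(3)$ via Theorem \ref{theorem:from-general-to-semi-graded} and Proposition \ref{proposition:semi-graded-to-graded}, and the algebraic refinement via Theorem \ref{theorem:automatic-continuity}. You are in fact slightly more careful than the paper in spelling out the reduction to an isomorphism covering $X$ before invoking Theorem \ref{theorem:from-general-to-semi-graded}, which the paper's proof leaves implicit (relying on the discussion after Proposition \ref{proposition:base-detection}).
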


\begin{proof}
The equivalence of between (1) and (2) and (3) is given by Proposition \ref{prop:weighted-orbit-char} and Theorem \ref{theorem:bounded-graded}, and (3) implies (4) trivially. To show that (4) implies (3), let $\varphi : \tensor(\sigma,w) \rightarrow \tensor(\tau,u)$ be a bounded isomorphism. By Theorem \ref{theorem:from-general-to-semi-graded}, there is a semi-graded bounded isomorphism $\psi : \tensor(\sigma,w) \rightarrow \tensor(\tau,u)$. Then by Proposition \ref{proposition:semi-graded-to-graded} we obtain a completely bounded graded isomorphism $\widetilde{\psi} : \tensor(\sigma,w) \rightarrow \tensor(\tau,u)$.

For the last part, if either $\sigma$ or $\tau$ are well-supported, and $\varphi : \tensor(\sigma,w) \rightarrow \tensor(\tau,u)$ is an algebraic isomorphism, then by Theorem \ref{theorem:automatic-continuity}, either $\varphi$ or $\varphi^{-1}$ is bounded, but then by the Open Mapping Theorem in Banach spaces, both are bounded.
\end{proof}

\begin{theorem}[Isometric isomorphisms] \label{theorem:isom-main}
Let $(\sigma,w)$ and $(\tau,u)$ be WPS over $X$ and $Y$ respectively. The following are equivalent
\begin{enumerate}
\item
$(\sigma,w)$ and $(\tau,u)$ are branch-transition conjugate.
\item
$C(\sigma,w)$ and $C(\tau,u)$ are unitarily isomorphic.
\item
There exists a graded completely isometric isomorphism $\varphi : \tensor(\sigma,w) \rightarrow \tensor(\tau,u)$.
\item
There exists an isometric isomorphism $\varphi : \tensor(\sigma,w) \rightarrow \tensor(\tau,u)$.
\end{enumerate}
\end{theorem}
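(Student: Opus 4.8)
The plan is to run the same argument as in Theorem \ref{theorem:alg-bdd-main}, substituting the isometric analogues of each ingredient, so that ``similarity'' becomes ``unitary'' and ``(completely) bounded'' sharpens to ``(completely) isometric'' throughout; all tensor iterates of a $\rho$-unitary have norm $1$, so no constants intervene.

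First I would dispose of $(1)\Leftrightarrow(2)\Leftrightarrow(3)$. The equivalence $(1)\Leftrightarrow(2)$ is precisely Corollary \ref{cor:branch-transition-char}: branch-transition conjugacy via $\gamma$ is the same as the existence of a $\rho$-unitary $U:C(\sigma,w)\to C(\tau,u)$ with $\rho(f)=f\circ\gamma^{-1}$. The equivalence $(2)\Leftrightarrow(3)$ is Theorem \ref{theorem:isometric-graded}: a $\rho$-unitary promotes to a graded completely isometric $\Ad_U$, and conversely a graded isometric isomorphism $\varphi$ yields a $\rho_\varphi$-unitary $U_\varphi$ with $\rho_\varphi=\varphi|_{\Aa}$ a $*$-isomorphism. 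Finally $(3)\Rightarrow(4)$ is immediate.

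The real content is $(4)\Rightarrow(3)$, and here I would follow the reduction chain already assembled in Sections \ref{sec:character-space} and \ref{sec:main-results}. Given an isometric isomorphism $\varphi:\tensor(\sigma,w)\to\tensor(\tau,u)$, it is automatically base-detecting (as noted for isometric isomorphisms in Section \ref{sec:tensor-algebras}, or by Proposition \ref{proposition:base-detection}); writing $\gamma=(\rho_\varphi^{-1})^*:X\to Y$ for the induced homeomorphism and $\widetilde\gamma:\tensor(\tau^\gamma,u^\gamma)\to\tensor(\tau,u)$ for the completely isometric graded isomorphism induced by the conjugacy of $(\tau^\gamma,u^\gamma)$ with $(\tau,u)$, the composite $\psi_0=\widetilde\gamma^{-1}\circ\varphi$ is an isometric isomorphism between tensor algebras over the \emph{same} space $X$ that covers $X$. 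Then Theorem \ref{theorem:from-general-to-semi-graded} delivers a semi-graded isometric isomorphism $\psi:\tensor(\sigma,w)\to\tensor(\tau^\gamma,u^\gamma)$ covering $X$, and Corollary \ref{proposition:isometric-semi-graded-to-graded} upgrades $\psi$ to a graded completely isometric isomorphism $\widetilde\psi$; composing once more with $\widetilde\gamma$ yields the graded completely isometric isomorphism $\tensor(\sigma,w)\to\tensor(\tau,u)$ demanded by $(3)$.

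The hard part is the one packaged inside Theorem \ref{theorem:from-general-to-semi-graded}: manufacturing a semi-graded isomorphism out of an arbitrary one via the Davidson--Ramsey--Shalit ``fill the circle'' manipulation of the M\"obius maps $\hat f_x^\varphi$ on the analytic discs $\Mm(\tau,u)_x$, together with the continuous choice of gauge parameters $\lambda,\gamma:X\to\mathbb{T}$ from Proposition \ref{proposition:continuous-extensions}. Since that step is already established in full generality, and its output inherits whichever of ``bounded'' or ``isometric'' the input had, the present theorem is a line-by-line mirror of the bounded case; no new difficulty arises in specializing to isometric isomorphisms.
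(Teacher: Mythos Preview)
Your proof is correct and follows essentially the same approach as the paper: the equivalence of (1), (2), (3) via Corollary \ref{cor:branch-transition-char} and Theorem \ref{theorem:isometric-graded}, and for $(4)\Rightarrow(3)$ the chain Theorem \ref{theorem:from-general-to-semi-graded} followed by Corollary \ref{proposition:isometric-semi-graded-to-graded}. The only difference is that you spell out explicitly the reduction to an isomorphism covering $X$ (via base-detection and conjugating by $\widetilde\gamma$), which the paper leaves implicit in its invocation of Theorem \ref{theorem:from-general-to-semi-graded}; this is the reduction already discussed after Proposition \ref{proposition:base-detection}, so nothing new is being added.
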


\begin{proof}
The equivalence of between (1) and (2) and (3) is given by Corollary \ref{cor:branch-transition-char} and Theorem \ref{theorem:isometric-graded}, and (3) implies (4) trivially. To show that (4) implies (3), let $\varphi : \tensor(\sigma,w) \rightarrow \tensor(\tau,u)$ be an isometric isomorphism. By Theorem \ref{theorem:from-general-to-semi-graded}, there is a semi-graded isometric isomorphism $\psi : \tensor(\sigma,w) \rightarrow \tensor(\tau,u)$. Then by Corollary \ref{proposition:isometric-semi-graded-to-graded} we obtain a completely isometric graded isomorphism $\widetilde{\psi} : \tensor(\sigma,w) \rightarrow \tensor(\tau,u)$.
\end{proof}

\begin{example}  \label{ex:isometric-vs-bounded}
By the above two theorems and Example \ref{example:different-invariants} we see that there are two WPS for which there exists an algebraic / bounded isomorphism of their tensor algebras but there is no \emph{isometric} isomorphism between their tensor algebras. 

This shows that the isometric isomorphism problem for general tensor algebras cannot be solved just by extracting information from algebraic / bounded isomorphism invariants of the tensor algebras, such as representations into upper triangular $2 \times 2$ matrices, which were used in \cite{IsoConjAlg, Multivar, Davidson-Roydor-topo-graphs, IsoDirGrAlg, Solel-Quiver}.
\end{example}

\section{Applications and comparisons} \label{sec:app-comp}

In this section we apply our results to certain subclasses of WPS, by computing what the conjugation relations yield for these classes. For some classes of WPS, our tensor algebras coincide with previously-investigated operator algebras, and we are then able to recover some results on classification of operator algebras, under certain hypotheses on the class.

We note that the results we are able to recover here provide alternative proofs to those proofs using methods of representations into upper triangular $2 \times 2$ matrices as used in \cite{IsoConjAlg, Multivar, Davidson-Roydor-topo-graphs, IsoDirGrAlg, Solel-Quiver}.

\subsection{Non-negative matrices and multiplicity free directed graphs}

When we have a non-negative matrix $A= [A_{ij}]$ indexed by a finite set $\Omega$, we associated a $d$-variable WPS $(\sigma^A, w^A)$ to it as in Subsection \ref{subsec:nnm}, to which we associate a topological quiver $ \mathcal{Q}(A):= \mathcal{Q}(\sigma^A,w^A) = (Gr(A),P(A))$ given by $Gr(A) = Gr(\sigma^A) = \{ \ (i,j) \ | \ A_{ij} >0 \ \}$ with Radon measures $P(A)_j := P(\sigma^A,w^A)_j = \sum_{i\in \Omega} A_{ij}\delta_{(i,j)}$. This topological quiver encodes the information of the entries of $A$ into the Radon measures, since the entries $A_{ij} = \int_{Gr(A)} \chi_{\{(i,j)\}} dP(A)_j$ can be detected by integration against a characteristic function of a singleton. We note that $Gr(A)$ has no sinks if and only if $(\sigma^A,w^A)$ is well-supported. 

Since the tensor algebra associated to the C*-correspondence arising from the topological quiver $\mathcal{Q}(A)$ is $\tensor(\sigma^A,w^A)$, we just write $\tensor(A) : = \tensor(\sigma^A,w^A)$.

Thus, for two non-negative matrices $A=[A_{ij}]$ and $B=[B_{ij}]$ indexed by $\Omega^A$ and $\Omega^B$ respectively, we see that the graphs $Gr(A)$ and $Gr(B)$ are isomorphic directed graphs if and only if $(\sigma^A,w^A)$ and $(\sigma^B,w^B)$ are graph conjugate, if and only if they are branch-transition conjugate, since the topology on $Gr(\sigma^A) = Gr(A) = \{ \ (i,j) \ | \ A_{ij} > 0 \ \}$ is discrete, and so the weight-transition functions will always be continuous. We hence obtain the following:

\begin{corollary}
Let $A$ and $B$ be non-negative matrices indexed by a finite set $\Omega$. Then
$Gr(A)$ and $Gr(B)$ are isomorphic directed graphs if and only if
$\tensor(A)$ and $\tensor(B)$ are (completely) bounded / (completely) isometrically isomorphic. Moreover, if either $Gr(A)$ or $Gr(B)$ have no sinks, the above is equivalent to $\tensor(A)$ and $\tensor(B)$ being algebraically isomorphic.
\end{corollary}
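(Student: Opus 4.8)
The plan is to reduce the statement directly to the two main classification theorems, Theorems \ref{theorem:alg-bdd-main} and \ref{theorem:isom-main}, using what has already been observed in this subsection about the WPS $(\sigma^A,w^A)$ attached to a non-negative matrix as in Subsection \ref{subsec:nnm}. The crucial point is that $Gr(\sigma^A) = Gr(A) = \{(i,j)\mid A_{ij}>0\}$ is a finite \emph{discrete} space, so the analytic subtleties distinguishing the conjugacy relations all evaporate.

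First I would record the chain of equivalences on the dynamical side. A homeomorphism $\Omega\to\Omega$ is simply a permutation, and by the alternative description of graph conjugacy, $\sigma^A$ and $\sigma^B$ are graph conjugate exactly when there is a permutation $\gamma$ with $A_{ij}>0 \iff B_{\gamma(i)\gamma(j)}>0$, i.e.\ exactly when $Gr(A)$ and $Gr(B)$ are isomorphic directed graphs. Since $\Omega$ is discrete it has no branching points for $\sigma^A$ (every convergent net is eventually constant), so by Corollary \ref{cor:no-branching-graph} graph conjugacy of $(\sigma^A,w^A)$ and $(\sigma^B,w^B)$ is already equivalent to branch-transition conjugacy; equivalently, by Proposition \ref{prop:weight-cont-bound} the weight function $w$, hence any weight-transition function $u^\gamma/w$, is automatically continuous on the discrete space $Gr(A)$. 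Combining this with the hierarchy ``branch-transition $\Rightarrow$ weighted-orbit $\Rightarrow$ graph conjugate'' collapses all three relations: for these WPS, graph conjugacy, weighted-orbit conjugacy and branch-transition conjugacy coincide, and all are equivalent to $Gr(A)\cong Gr(B)$ as directed graphs.

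Next I would invoke the main theorems. Theorem \ref{theorem:isom-main} says branch-transition conjugacy of $(\sigma^A,w^A)$ and $(\sigma^B,w^B)$ is equivalent to the existence of a graded completely isometric isomorphism $\tensor(A)\to\tensor(B)$, which is in turn equivalent to the existence of merely an isometric isomorphism; together with the previous paragraph this yields ``$Gr(A)\cong Gr(B) \iff \tensor(A)$ and $\tensor(B)$ are (completely) isometrically isomorphic''. In the same way Theorem \ref{theorem:alg-bdd-main}, applied to weighted-orbit conjugacy, yields ``$Gr(A)\cong Gr(B) \iff \tensor(A)$ and $\tensor(B)$ are (completely) bounded isomorphic''. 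The parenthetical ``(completely)'' is licensed because item $(3)$ in each theorem already produces a graded \emph{completely} bounded, resp.\ completely isometric, isomorphism, while items $(3)$ and $(4)$ are equivalent.

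Finally, for the ``moreover'' clause: $Gr(A)$ has no sinks precisely when every column of $A$ is nonzero, i.e.\ when the clopen domains $X_i=\{j\mid A_{ij}>0\}$ cover $\Omega$, i.e.\ when $(\sigma^A,w^A)$ is well-supported (and likewise for $B$). Under this hypothesis the last sentence of Theorem \ref{theorem:alg-bdd-main} adjoins the existence of an \emph{algebraic} isomorphism $\tensor(A)\to\tensor(B)$ to the list of equivalent conditions, which completes the argument. No step here is genuinely difficult; the one deserving to be written out carefully is the identification of ``graph conjugacy of the associated WPS'' with ``directed graph isomorphism of $Gr(A)$ and $Gr(B)$'', together with the remark that discreteness trivializes the weight-continuity conditions — everything else is a bookkeeping application of already-established results.
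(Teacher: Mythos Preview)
Your proposal is correct and follows essentially the same route as the paper: the paper observes in the paragraph immediately preceding the corollary that discreteness of $Gr(A)$ forces the weight-transition function to be continuous, so graph conjugacy, branch-transition conjugacy (and hence weighted-orbit conjugacy) all coincide with directed-graph isomorphism of $Gr(A)$ and $Gr(B)$, and that ``no sinks'' is equivalent to well-supportedness; the corollary then follows directly from Theorems~\ref{theorem:alg-bdd-main} and~\ref{theorem:isom-main}. Your only minor variation is routing the collapse of conjugacy relations through Corollary~\ref{cor:no-branching-graph} (no branching points in a discrete space) rather than directly through continuity of the weight-transition function, but this is the same observation.
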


When the non-negative matrix is given as the incidence matrix $A_E = [m_{w,v}]$ of some finite directed graph $G = (V,E,r,s)$ as in Subsection \ref{subsec:fdg}, such that $m_{w,v}$ is either $0$ or $1$, then $G$ is multiplicity free. The topological quiver $\mathcal{Q}(A_E)$ associated to $A_E$ is then just the topological quiver structure we associate to the original graph $G$, that is $G = Gr(A_E) : = \{ \ (r(e),s(e)) = e \ | \ e \in E \ \}$, with Radon measures given by counting measure $P(G)_v = P(A_E)_v = \sum_{w : (w,v) \in Gr(A_E)} \delta_{(w,v)}$ on $s^{-1}(v)$ (See Example 3.19 in \cite{Muhly-Tomforde}, with reversed source and range maps). This means that the tensor algebra $\tensor(G): = \tensor(C(G))$ associated to $G$ as in \cite{IsoDirGrAlg}, coincides with $\tensor(C(Q(A_E)))$, and we recover results of Katsoulis and Kribs in \cite{IsoDirGrAlg} and Solel in \cite{Solel-Quiver}, for the case of finite multiplicity-free graphs.

\begin{corollary} \label{cor:graph-mf}
Let $G$ and $G'$ be \emph{finite multiplicity free} graphs. Then $G$ and $G'$ are isomorphic as directed graphs if and only if $\tensor(G)$ and $\tensor(G')$ are (completely) bounded / (completely) isometrically isomorphic. Moreover, if either $G$ or $G'$ have no sinks, the above is equivalent to $\tensor(G)$ and $\tensor(G')$ being algebraically isomorphic.
\end{corollary}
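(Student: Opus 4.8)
The plan is to deduce this statement directly from the preceding corollary on non-negative matrices, applied to the incidence matrix of a finite multiplicity-free graph. First I would recall from Subsection \ref{subsec:fdg} that a finite multiplicity-free directed graph $G = (V,E,r,s)$ is encoded by its incidence matrix $A_E = [m_{w,v}]$ indexed by $V$, which is a $\{0,1\}$-matrix precisely because $G$ is multiplicity free. The key identification to pin down is that the tensor algebra $\tensor(G)$ in the sense of \cite{IsoDirGrAlg} coincides with $\tensor(C(\mathcal{Q}(A_E))) = \tensor(A_E)$: this amounts to checking that the quiver C*-correspondence $C(\mathcal{Q}(A_E))$ with its counting Radon measures $P(G)_v = \sum_{w:(w,v)\in Gr(A_E)}\delta_{(w,v)}$ is exactly the usual graph C*-correspondence of $G$, which is immediate from the formulas for the inner product and bimodule actions given in Section \ref{sec:Hilb-GNS-quiver} together with Example 3.19 in \cite{Muhly-Tomforde} (keeping track of our reversed range/source convention).

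Next I would observe that, since $A_E$ is a $\{0,1\}$-matrix, the directed graph $Gr(A_E) = \{(r(e),s(e)) \mid e\in E\}$ recovers $G$ itself, with no collapsing of parallel edges. Hence $G \cong G'$ as directed graphs if and only if $Gr(A_E) \cong Gr(A_{E'})$ as directed graphs. This is the one place where multiplicity-freeness is genuinely used: for a non-multiplicity-free graph the passage to $Gr(\sigma^E)$ loses edge multiplicities (these get absorbed into the Radon measures), so this equivalence would fail.

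With these identifications in hand, the stated equivalences follow by applying the preceding corollary on non-negative matrices to $A = A_E$ and $B = A_{E'}$: $Gr(A_E) \cong Gr(A_{E'})$ as directed graphs if and only if $\tensor(A_E) = \tensor(G)$ and $\tensor(A_{E'}) = \tensor(G')$ are (completely) bounded / (completely) isometrically isomorphic. For the algebraic isomorphism clause I would note that $G$ has no sinks if and only if $Gr(A_E)$ has no sinks, which by the discussion in Subsection \ref{subsec:nnm} is equivalent to $(\sigma^{A_E},w^{A_E})$ being well-supported; thus the well-supportedness hypothesis feeding the algebraic isomorphism part of the matrix corollary is exactly the no-sinks hypothesis stated here.

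I expect the only real subtlety to be the bookkeeping in the first paragraph, namely verifying carefully that the C*-correspondence produced by our topological-quiver construction agrees on the nose with the graph correspondence used in \cite{IsoDirGrAlg, Solel-Quiver} (including the reversal of range/source maps); but this is a routine comparison of explicit formulas, and everything else is a direct translation through the non-negative matrix corollary.
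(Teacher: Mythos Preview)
Your proposal is correct and follows essentially the same approach as the paper: the corollary is stated without a separate proof, being an immediate consequence of the preceding corollary on non-negative matrices together with the identification (spelled out in the paragraph just before the statement) of $\tensor(G)$ with $\tensor(A_E)$ via the $\{0,1\}$-incidence matrix and Example~3.19 of \cite{Muhly-Tomforde}. Your handling of the no-sinks/well-supported translation is likewise exactly what the paper intends.
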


\subsection{Peters' semi-crossed product}

For a continuous map $\sigma$ on a compact space $X$, we can associate an operator algebra $C(X) \times _{\sigma} \mathbb{Z}_+$ called \emph{Peters' semi-crossed product} to it as done originally by Peters in \cite{Peters-original}. We do this here by giving a universal definition. We call a pair $(\rho, T)$ a representation of $(X,\sigma)$ if $\rho: C(X) \rightarrow B(H)$ is a *-representation and $S \in B(H)$ a contraction such that $ \rho(f) S = S \rho (f \circ \sigma)$. We say that a $(\rho,S)$ is \emph{isometric} if in addition $S$ is an isometry.

Peters' semi-crossed product $C(X) \times _{\sigma} \mathbb{Z}_+$ of the system $(X,\sigma)$ is the norm closed algebra generated by the image of a universal \emph{isometric} representation $(\rho_u, S_u)$ for $(X,\sigma)$. Note that for any isometric representation $(\rho, S)$ we have that $S^* \rho(f) S = \rho(f\circ \sigma)$ and we obtain a representation of the WPS $(\sigma,1)$ as in Definition \ref{defi:representation-of-WPS}.

Muhly and Solel show in \cite{Muhly-Solel-Tensor-Rep} that every representation $(\rho,S)$ of $(X,\sigma)$ dilates to an isometric representation, so that Peters' semi-crossed product is also the norm closed algebra generated by the image of a (contractive) universal representation $(\rho_u, S_u)$.

When we look at $\sigma = (\sigma,1)$ as a WPS, by Proposition 3.21 in \cite{ExelCroPro} any representation $(\pi,T)$ of the WPS $(\sigma,1)$ satisfies $\pi(f) T = T \pi(f\circ \sigma)$, so in fact we have obtained a representation of the system $(X,\sigma)$ as in the sense above. We conclude that $\tensor(\sigma,1) \cong C(X) \times_{\sigma} \mathbb{Z}_+$.

On the other hand, for two continuous maps $\sigma$ and $\tau$ on compact spaces $X$ and $Y$ respectively, we have that $(\sigma,1)$ and $(\tau,1)$ are graph-conjugate if and only if $\sigma$ and $\tau$ are conjugate, and we obtain the following alternative proof, assuming our spaces are compact, to a theorem first proven by Davidson and Katsoulis as Corollary 4.7 in \cite{IsoConjAlg} via methods of representations into triangular $2 \times 2$ matrices.

\begin{corollary} \label{cor:conjugacy-problem}
Let $\sigma$ and $\tau$ be continuous maps on compact spaces $X$ and $Y$ respectively. Then $C(X) \times _{\sigma} \mathbb{Z}_+$ and $C(X) \times _{\sigma} \mathbb{Z}_+$ are algebraically / (completely) bounded /  (completely) isometrically isomorphic if and only there is some homeomorphism $\gamma : X \rightarrow Y$ such that $\gamma^{-1} \tau \gamma = \sigma$.
\end{corollary}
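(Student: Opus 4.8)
The plan is to read the statement off the main classification Theorems \ref{theorem:alg-bdd-main} and \ref{theorem:isom-main}, applied to the $1$-variable weighted partial systems $(\sigma,1)$ and $(\tau,1)$, via the identification $\tensor(\sigma,1)\cong C(X)\times_\sigma\mathbb{Z}_+$ and $\tensor(\tau,1)\cong C(Y)\times_\tau\mathbb{Z}_+$ established just above (the second factor in the corollary statement should of course read $C(Y)\times_\tau\mathbb{Z}_+$). Regarded as a WPS, $(\sigma,1)$ has its unique clopen domain $X_1$ equal to all of $X$, hence is well-supported, and likewise for $(\tau,1)$; so the final clause of Theorem \ref{theorem:alg-bdd-main} applies and an \emph{algebraic} isomorphism of these tensor algebras is automatically bounded, which lets us fold the algebraic case into the bounded one.

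Next I would observe that a $1$-variable partial system has no branching points, since branching requires two distinct indices $i\neq j$. By Corollary \ref{cor:no-branching-graph}, graph conjugacy of $(\sigma,1)$ and $(\tau,1)$ is then equivalent to branch-transition conjugacy; and since branch-transition conjugacy implies weighted-orbit conjugacy implies graph conjugacy, all three conjugacy relations coincide for these systems. Plugging this into Theorems \ref{theorem:alg-bdd-main} and \ref{theorem:isom-main} shows that algebraic isomorphism, (completely) bounded isomorphism, and (completely) isometric isomorphism of $\tensor(\sigma,1)$ and $\tensor(\tau,1)$ are all equivalent to graph conjugacy of $(\sigma,1)$ and $(\tau,1)$.

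Finally I would translate graph conjugacy into ordinary conjugacy: for a single globally defined continuous map $Gr(\sigma)=\{(\sigma(x),x):x\in X\}$ is a genuine function graph on which the source map $(\sigma(x),x)\mapsto x$ is a homeomorphism onto $X$, so for a homeomorphism $\gamma:X\to Y$ the condition $Gr(\sigma)=Gr(\tau^\gamma)$ is equivalent to $\sigma=\gamma^{-1}\tau\gamma$ --- this is precisely the observation recorded in Subsection \ref{subsec:pdcf} that graph conjugacy of partially defined continuous maps coincides with conjugacy. Chaining this with the previous step yields the corollary. The argument is essentially assembly of already-proved results; the only points needing any care are the well-supportedness check needed to capture \emph{algebraic} isomorphisms and keeping straight which of the several conjugacy relations is in play, so I do not expect a genuine analytic obstacle here --- the real work having been done in the general theorems.
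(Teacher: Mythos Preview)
Your proposal is correct and follows essentially the same approach as the paper: identify the semicrossed products with $\tensor(\sigma,1)$ and $\tensor(\tau,1)$, observe that for single-variable systems all three conjugacy relations collapse to ordinary conjugacy (which the paper records both in the hierarchy discussion and in the sentence immediately preceding the corollary), note well-supportedness for the algebraic case, and invoke Theorems~\ref{theorem:alg-bdd-main} and~\ref{theorem:isom-main}. The paper leaves the corollary's proof implicit in the surrounding discussion; you have simply made each step explicit.
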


\subsection{Partial systems and topological graphs}

When $\sigma = (\sigma_1,...,\sigma_d)$ is a partially defined system so that $\sigma_i : X_i \rightarrow X$, we think of it as a WPS by specifying $w=(1,...,1)$. The weight induced on $Gr(\sigma)$ then becomes $m_{\sigma}(e):= w(e) = \sum_{i\in I(e,\sigma)}1 = |I(e,\sigma)|$ is just the multiplicity of $e\in Gr(\sigma)$. In this case, we have the following characterization of isometric isomorphism between tensor algebras. Denote by $\tensor(\sigma) : = \tensor(\sigma,1)$ the tensor algebra of a partially defined system $\sigma$.

\begin{theorem}
Let $\sigma$ and $\tau$ be partially defined systems on compact $X$ and $Y$ respectively. Then $\tensor(\sigma)$ and $\tensor(\tau)$ are isometrically isomorphic if and only if there is a homeomorphism $\gamma : X \rightarrow Y$ such that $Gr(\sigma) = Gr(\tau^{\gamma})$ and the function $\frac{m_{\tau^{\gamma}}}{m_{\sigma}}: Gr(\sigma) \rightarrow (0,\infty)$ is locally constant.
\end{theorem}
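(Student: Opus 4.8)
The plan is to deduce this from Theorem \ref{theorem:isom-main} applied to the WPS $(\sigma,1)$ and $(\tau,1)$. For these, as recorded in the discussion above, the weights induced on the graphs are precisely the multiplicity functions $m_{\sigma}(e) = |I(e,\sigma)|$ and $m_{\tau^{\gamma}}(e) = |I(e,\tau^{\gamma})|$, and since $1^{\gamma} = 1$, the weighted transition function from $1$ to $1^{\gamma}$ along a graph conjugacy $\gamma$ is exactly $\frac{m_{\tau^{\gamma}}}{m_{\sigma}}$. Thus Theorem \ref{theorem:isom-main} already tells us that $\tensor(\sigma)$ and $\tensor(\tau)$ are isometrically isomorphic if and only if there is a homeomorphism $\gamma : X \rightarrow Y$ with $Gr(\sigma) = Gr(\tau^{\gamma})$ for which $\frac{m_{\tau^{\gamma}}}{m_{\sigma}}$ is continuous at every branching edge of $Gr(\sigma) = Gr(\tau^{\gamma})$. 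Consequently the only thing that remains is to show that, for such a $\gamma$, this continuity-at-branching-edges condition is equivalent to $\frac{m_{\tau^{\gamma}}}{m_{\sigma}}$ being locally constant on all of $Gr(\sigma)$.

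For the forward implication I would first invoke Corollary \ref{corollary:radon-nikodym-continuity}, which promotes branch-transition conjugacy to continuity of $\frac{m_{\tau^{\gamma}}}{m_{\sigma}}$ on \emph{all} of $Gr(\sigma)$. The key observation is then that this ratio takes values in the \emph{finite} set $\{\, p/q \mid 1 \leq p \leq d',\ 1 \leq q \leq d \,\}$, where $d$ and $d'$ are the numbers of variables of $\sigma$ and $\tau$ respectively; this set is discrete in $(0,\infty)$, and a continuous function from any topological space into a discrete subset of $\rr$ is locally constant. Hence $\frac{m_{\tau^{\gamma}}}{m_{\sigma}}$ is locally constant on $Gr(\sigma)$. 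The converse is immediate, since a locally constant function is continuous everywhere, in particular at every branching edge. One can also phrase the equivalence more sharply using Proposition \ref{prop:weight-cont-bound}: both $m_{\sigma}$ and $m_{\tau^{\gamma}}$ are integer-valued and continuous away from branching edges, hence already locally constant there, so the entire content of either condition is concentrated on the branching edges, where ``continuous at $e$'' and ``locally constant at $e$'' coincide precisely because the ratio has finite range.

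I do not anticipate a genuine obstacle here beyond having Theorem \ref{theorem:isom-main} available: the argument is essentially a translation of the general criterion into the unweighted language. The one place where the specific hypothesis $w \equiv u \equiv 1$ is actually used — as opposed to general continuous weights — is the finiteness of the range of $m_{\tau^{\gamma}}/m_{\sigma}$, and it is exactly this finiteness that lets continuity bootstrap up to local constancy; for general weights no such strengthening is available, which is why the branch-transition condition is stated in terms of continuity rather than local constancy.
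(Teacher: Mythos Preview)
Your proposal is correct and follows essentially the same route as the paper: invoke Theorem \ref{theorem:isom-main} to reduce to branch-transition conjugacy, then use the fact that $\frac{m_{\tau^{\gamma}}}{m_{\sigma}}$ has finite range to upgrade continuity to local constancy. You are merely more explicit than the paper in separating out the step (via Corollary \ref{corollary:radon-nikodym-continuity} or Proposition \ref{prop:weight-cont-bound}) that passes from continuity at branching edges to continuity on all of $Gr(\sigma)$.
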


\begin{proof}
By Theorem \ref{theorem:isom-main} we have that $\tensor(\sigma)$ and $\tensor(\tau)$ are isometrically isomorphic if and only if there is a homeomorphism $\gamma : X \rightarrow Y$ such that $Gr(\sigma) = Gr(\tau^{\gamma})$ and the function $\frac{m_{\tau^{\gamma}}}{m_{\sigma}}: Gr(\sigma) \rightarrow (0,\infty)$ is continuous. Since $\frac{m_{\tau^{\gamma}}}{m_{\sigma}}$ can only attain finitely many values, it is continuous if and only if it is locally constant.
\end{proof}

One can also provide a similar theorem characterizing algebraic / bounded isomorphisms using Theorem \ref{theorem:alg-bdd-main}, in terms of multiplicity functions and orbits of the WPS.

Next, we would like to compare our algebras with an algebra constructed from topological graphs. For a partial system $\sigma = (\sigma_1,...,\sigma_d)$ on a compact $X$ so that $\sigma_i : X_i \rightarrow X$ for $X_i$ clopen, we can associate a topological graph $\mathcal{E}(\sigma):= (X, Gr^{dis}(\sigma), r,s)$ to it in the sense of Katsura \cite{Katsura-topological-graphs}. We define the edges to be the \emph{disjoint union}
$$
Gr^{dis}(\sigma) = \sqcup_{i=1}^d Gr(\sigma_i) = \sqcup_{i=1}^d \{ \ (\sigma_i(x),x) \ | \ x\in X_i \ \}
$$
and range and source maps are given by $r(\sigma_i(x),x) = \sigma_i(x)$ and $s(\sigma_i(x),x) = x$. This way $s$ is a local homeomorphism, $r$ is continuous, and we obtain a topological graph. 

From Example 3.20 of \cite{Muhly-Tomforde} (with reversed source and range) we see that we can think of $\mathcal{E}(\sigma)$ as a topological quiver as follows: since for every $x\in X$ we have that $s^{-1}(x)$ is discrete, we may define counting measures $P(\sigma)_x$ on $s^{-1}(x)$, so as to obtain a topological quiver in the sense of Muhly and Tomforde. When we associate a C*-correspondence to this topological quiver, as we do in the discussion after Definition \ref{defi:topological-quiver}, it coincides with the C*-correspondence that one usually associates to the topological graph as done by Katsura in \cite{Katsura-topological-graphs}, and we denote this C*-correspondence by $C(\mathcal{E}(\sigma))$. 

When we take the tensor algebra $\tensor(\mathcal{E}(\sigma)):=\tensor(C(\mathcal{E}(\sigma)))$ we obtain the tensor algebra associated to the topological graph of $\sigma$. Tensor algebras associated to topological graphs were investigated by Davidson and Roydor in \cite{Davidson-Roydor-topo-graphs}, where they show that for two (compact) topological graphs, if the tensor algebras associated to them are algebraically isomorphic, then the topological graphs are locally conjugate as in the following definition. 

\begin{defi}
Let $E = (E^0, E^1,s_E, r_E)$ and $F = (F^0,F^1,s_F,r_F)$ be two compact topological graphs. They are said to be \emph{locally conjugate} if there exists a homeomorphism $\gamma : E^0 \rightarrow F^0$ such that for any $x \in E^0$, there is a neighborhood $U$ of $x$ and a homeomorphism $\lambda: s^{-1}_E(U) \rightarrow s^{-1}_F(\gamma U)$ such that $s_F \lambda = \gamma s_E |_{s^{-1}_E(U)}$ and $r_F \lambda = \gamma r_E |_{s^{-1}_E(U)}$.
\end{defi}

A natural question to ask is, when does the topological quiver $\mathcal{Q}(\sigma): = \mathcal{Q}(\sigma,1)$ given in Definition \ref{def:quiver-and-positive-op} coincide with $\mathcal{E}(\sigma)$? 

In general, $\mathcal{Q}(\sigma)$, does not coincide with $\mathcal{E}(\sigma)$, as $\mathcal{Q}(\sigma)$ loses some of the information on the multiplicity of edges. However, if $\sigma$ has no coinciding points, that is, if for all $x\in X$ we have $\sigma_i(x) \neq \sigma_j(x)$ for all $i\neq j$ in $\{1,...,d\}$ and $x\in X_i \cap X_j$, then $Gr^{dis}(\sigma) = Gr(\sigma)$ is multiplicity free so that $\mathcal{Q}(\sigma)$ and $\mathcal{E}(\sigma)$ coincide as topological graphs.

Thus, in the case when there are no coinciding points, the correspondences $C(\mathcal{Q}(\sigma))$ and $C(\mathcal{E}(\sigma))$ coincide, and so do their tensor algebras $\tensor(\sigma)$ and $\tensor(\mathcal{E}(\sigma))$ respectively. We then obtain the following classification result for topological graphs arising from partial systems with no coinciding points, which yield a class of multiplicity free topological graphs. See Subsection 8.3 of \cite{Kakariadis-Shalit} where $Q$-$P$ local piecewise conjugacy of a partial system is compared with local conjugacy of associated topological graphs.

\begin{corollary} \label{cor:multip-free-part-sys}
Let $\sigma$ and $\tau$ be $d$ and $d'$ variable partial systems over compact $X$ and $Y$ respectively, and suppose that \emph{both} $\sigma$ and $\tau$ have no coinciding points. Then $\tensor(\mathcal{E}(\sigma))$ and $\tensor(\mathcal{E}(\tau))$ are (completely) bounded / (completely) isometric if and only if $\sigma$ and $\tau$ are graph conjugate, if and only if $\mathcal{E}(\sigma)$ and $\mathcal{E}(\tau)$ are locally conjugate. If either $\sigma$ or $\tau$ are well-supported, the above is equivalent to $\tensor(\mathcal{E}(\sigma))$ and $\tensor(\mathcal{E}(\tau))$ being algebraically isomorphic.
\end{corollary}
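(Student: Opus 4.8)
The plan is to layer everything on top of Theorems~\ref{theorem:alg-bdd-main} and~\ref{theorem:isom-main}, and to separately match \emph{graph conjugacy} of partial systems with \emph{local conjugacy} of their associated topological graphs. First I would record the identification already noted before the statement: since neither $\sigma$ nor $\tau$ has coinciding points, the edge sets $Gr^{dis}(\sigma)=Gr(\sigma)$ and $Gr^{dis}(\tau)=Gr(\tau)$ are multiplicity free, so the topological quivers $\Qq(\sigma,1)$ and $\Qq(\tau,1)$ coincide with the topological graphs $\Ee(\sigma)$ and $\Ee(\tau)$; hence $C(\Ee(\sigma))=C(\sigma,1)$, $C(\Ee(\tau))=C(\tau,1)$, and so $\tensor(\Ee(\sigma))=\tensor(\sigma,1)$ and $\tensor(\Ee(\tau))=\tensor(\tau,1)$ as concrete operator algebras. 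Next I would observe that a partial system with no coinciding points has no branching points: a branching point of $\sigma$ lies in some $B(I)=\partial C(I)\subseteq C(I)$, necessarily with $|I|\ge 2$ since $C(\{i\})=X_i$ is clopen, which forces $\sigma_i(x)=\sigma_j(x)$ for distinct $i,j\in I$, i.e.\ a coinciding point. Therefore Corollary~\ref{cor:no-branching-graph}, applied with $w=u=1$, gives that $\sigma$ and $\tau$ are graph conjugate if and only if $(\sigma,1)$ and $(\tau,1)$ are branch-transition conjugate; and by the hierarchy of conjugacy relations this is also equivalent to weighted-orbit conjugacy. Feeding these equivalences into Theorem~\ref{theorem:isom-main} and Theorem~\ref{theorem:alg-bdd-main} yields that graph conjugacy of $\sigma,\tau$ is equivalent to the existence of a (completely) isometric isomorphism, to the existence of a (completely) bounded isomorphism, and --- when $\sigma$ or $\tau$ is well-supported, via the ``moreover'' clause of Theorem~\ref{theorem:alg-bdd-main} --- to the existence of a merely algebraic isomorphism between $\tensor(\Ee(\sigma))$ and $\tensor(\Ee(\tau))$. (The ``(completely)'' is automatic, since those theorems produce \emph{graded} completely bounded/isometric isomorphisms.)

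It then remains to prove that $\sigma$ and $\tau$ are graph conjugate if and only if $\Ee(\sigma)$ and $\Ee(\tau)$ are locally conjugate. For the forward direction, if $\gamma:X\to Y$ realizes graph conjugacy then $\gamma\times\gamma:Gr(\sigma)\to Gr(\tau)$ is a homeomorphism, and taking $U=X$ and $\lambda=\gamma\times\gamma$ in the definition of local conjugacy works, since $s_F(\gamma(y),\gamma(x))=\gamma(x)=\gamma s_E(y,x)$ and $r_F(\gamma(y),\gamma(x))=\gamma(y)=\gamma r_E(y,x)$. For the converse, suppose $\gamma:X\to Y$ and local homeomorphisms $\lambda_x:s_E^{-1}(U_x)\to s_F^{-1}(\gamma U_x)$ witness local conjugacy. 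Because $\tau$ has no coinciding points, $Gr(\tau)$ has at most one edge with a prescribed source and range, so the identities $s_F\lambda_x=\gamma s_E$ and $r_F\lambda_x=\gamma r_E$ \emph{force} $\lambda_x(e)=(\gamma(r_E(e)),\gamma(s_E(e)))$ for every $e\in s_E^{-1}(U_x)$; that is, $\lambda_x=(\gamma\times\gamma)|_{s_E^{-1}(U_x)}$. Hence $(\gamma\times\gamma)(s_E^{-1}(U_x))=s_F^{-1}(\gamma U_x)$ for each $x$, and taking the union over the cover $\{U_x\}$ of $X$ (so that $\{\gamma U_x\}$ covers $Y$) gives $(\gamma\times\gamma)(Gr(\sigma))=Gr(\tau)$. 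Thus $\gamma\times\gamma:Gr(\sigma)\to Gr(\tau)$ is a continuous bijection --- injectivity coming from that of $\gamma$ --- from a compact space onto a Hausdorff space, hence a homeomorphism, so $Gr(\sigma)=Gr(\tau^{\gamma})$ and $\sigma,\tau$ are graph conjugate.

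The only genuinely delicate point is the converse just sketched: one must see that the \emph{locally} defined conjugating homeomorphisms $\lambda_x$ assemble to a single graph isomorphism. This is exactly where the no-coinciding-points hypothesis is used essentially --- multiplicity-freeness of $Gr(\tau)$ rigidly determines each $\lambda_x$ as a restriction of $\gamma\times\gamma$, so no incompatibility can arise on overlaps and the gluing is automatic. Everything else is bookkeeping on top of the classification theorems and Corollary~\ref{cor:no-branching-graph} already established.
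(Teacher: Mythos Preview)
Your proposal is correct and follows the same overall architecture as the paper: invoke Corollary~\ref{cor:no-branching-graph} (via ``no coinciding points $\Rightarrow$ no branching points'') to collapse all conjugacy notions to graph conjugacy, apply Theorems~\ref{theorem:alg-bdd-main} and~\ref{theorem:isom-main}, and then separately match graph conjugacy with local conjugacy of the topological graphs.

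The one place you diverge is in the equivalence of graph conjugacy and local conjugacy, and there your route is cleaner. For graph conjugacy $\Rightarrow$ local conjugacy, the paper reduces to $\gamma=Id_X$ and then, for each $x$, constructs a local permutation $\alpha_x$ of indices and a clopen neighborhood on which $\sigma_i=\tau_{\alpha_x(i)}$, defining $\lambda$ accordingly; you instead observe that one may simply take $U=X$ and $\lambda=\gamma\times\gamma$ globally, which is immediate from the identification $E^1=Gr(\sigma)$, $F^1=Gr(\tau)$. For the reverse implication, the paper just says ``it is easy to see that $\gamma\times\gamma$ is a homeomorphism,'' whereas you supply the reason: multiplicity-freeness of $Gr(\tau)$ forces each local $\lambda_x$ to equal $(\gamma\times\gamma)|_{s_E^{-1}(U_x)}$, so the pieces glue automatically. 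The paper's more elaborate construction has the virtue of exhibiting the local \emph{index permutations} (in the spirit of piecewise conjugacy), but that extra information is not needed for the statement at hand, and your argument is shorter.
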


\begin{proof}
From Corollary \ref{cor:no-branching-graph} and Theorems \ref{theorem:alg-bdd-main} and \ref{theorem:isom-main} we see that $\tensor(\mathcal{E}(\sigma))$ and $\tensor(\mathcal{E}(\tau))$ are (completely) bounded / (completely) isometrically isomorphic if and only if $\sigma$ and $\tau$ are graph conjugate, so we need only show that graph conjugacy of $\sigma$ and $\tau$ is equivalent to local conjugacy of $\mathcal{E}(\sigma)$ and $\mathcal{E}(\tau)$.

Suppose $\mathcal{E}(\sigma)$ and $\mathcal{E}(\tau)$ are locally conjugate via $\gamma : E^0 \rightarrow F^0$. Since $E^1$ and $F^1$ can be identified with $Gr(\sigma)$ and $Gr(\tau)$ respectively, it is easy to see that $\gamma \times \gamma : Gr(\sigma) \rightarrow Gr(\tau)$ is a homeomorphism so that $Gr(\sigma) = Gr(\tau^{\gamma})$.

For the converse, suppose that $Gr(\sigma) = Gr(\tau^{\gamma})$ via some homeomorphism $\gamma : X \rightarrow Y$. We may assume without loss of generality that $\gamma = Id_X$ since we know that $F^{\gamma} = \Ee(\tau^{\gamma})$ and $F = \Ee(\tau)$ are isomorphic topological graphs via the pair $(\gamma, \gamma \times \gamma)$ in the sense that $\Gamma: = \gamma \times \gamma : Gr(\tau^{\gamma}) \rightarrow Gr(\tau)$ is a homeomorphism and $\gamma s_{F^{\gamma}} = s_F \Gamma$ and $\gamma r_{F^{\gamma}} = r_F \Gamma$, and isomorphism of topological graphs implies local conjugacy of the graphs. Let $s_{\sigma}$ and $r_{\sigma}$ denote the source and range maps of $\Ee(\sigma)$, and similarly denote $s_{\tau}$ and $r_{\tau}$ for the source and range of $\Ee(\tau)$.

Thus, for each $x \in X$ we have that $\{ \sigma_i(x) \}_{x\in X^{\sigma}_i} = \{\tau_j(x) \}_{x\in X^{\tau}_j}$ where $\sigma_i : X^{\sigma}_i \rightarrow X$ and $\tau_j : X^{\tau}_j \rightarrow X$. Since both $\sigma$ and $\tau$ have no coinciding points, we have that the tuples $( \sigma_i(x) )_{x\in X^{\sigma}_i}$ and $( \tau_j(x) )_{x\in X^{\tau}_j}$ are each comprised of pairwise distinct elements, and we have that $D^{\sigma}_x: = \{ i | x\in X^{\sigma}_i \}$ and $D^{\tau}_x : = \{ j | x \in X^{\tau}_j\}$ have the same cardinality. Hence, there is a bijection $\alpha_x : D^{\sigma}_x \rightarrow D^{\tau}_x$ so that $\sigma_i(x) = \tau_{\alpha_x(i)}(x)$. Now fix an $x\in X$. Continuity of $\sigma$ and $\tau$ guarantee that there exists an open neighborhood $V$ of $x$ such that $D^{\sigma}_y = D^{\sigma}_x$ and $D^{\tau}_y = D^{\tau}_x$ for all $y\in V$, and we obtain a continuous map $\alpha: y \mapsto \alpha_y$ from $V$ to the set of bijections between $D^{\sigma}_x$ and $D^{\tau}_x$. Take $U = \{ y \in V | \sigma_i(y) = \tau_{\alpha_x(i)}(y) \} $ which is clopen in $V$, and is hence open in $X$. We then define a homeomorphism $\lambda : s^{-1}_{\sigma}(U) \rightarrow s^{-1}_{\tau}(U)$ by setting $\lambda(\sigma_i(y),y) = (\tau_{\alpha_x(i)}(y),y)$ which satisfies $s_{\tau} \lambda = s_{\sigma} |_{s^{-1}_{\sigma}(U)}$ and $r_{\tau} \lambda = r_{\sigma} |_{s^{-1}_{\sigma}(U)}$ as required.
\end{proof}

\subsection*{Acknowledgments} The author would like to express his gratitude to Ken Davidson, his Ph.D. adviser, for the many fruitful discussions and keen advice he has provided, which helped refine this work to its current form. The author would also like to thank Orr	Shalit for providing helpful feedback on a draft version of this paper.

\end{document}